\definecolor{bluegray}{rgb}{0.4, 0.6, 0.8}
\definecolor{turquoise}{rgb}{0.2, 0.7, 0.6}
\newcommand{\Image}{\operatorname{Im}}
\newcommand{\catCat}{\mathbf{Cat}}
\newcommand{\bSect}{\mathsf{bSect}}
\newcommand{\Sect}{\mathsf{Sect}}
\title{A bundle perspective on contextuality
\\
{\Large{Empirical models and simplicial distributions on bundle scenarios}}}
\author[1]{Rui Soares Barbosa\footnote{rui.soaresbarbosa@inl.int}}
\author[2]{Aziz Kharoof\footnote{aziz.kharoof@bilkent.edu.tr}}
\author[2]{Cihan Okay\footnote{cihan.okay@bilkent.edu.tr}}
\affil[1]{{\small{INL -- International Iberian Nanotechnology Laboratory, Braga, Portugal}}}
\affil[2]{{\small{Department of Mathematics, Bilkent University, Ankara, Turkey}}}
\date{\today}
\begin{document}
  \maketitle

\begin{abstract}
This paper provides a bundle perspective to contextuality by introducing new categories of contextuality scenarios based on bundles of simplicial complexes and simplicial sets.
The former approach generalizes earlier work on the sheaf-theoretic perspective on contextuality, and the latter extends simplicial distributions, a more recent approach to contextuality formulated in the language of simplicial sets.
After constructing our bundle categories, we also construct functors that relate them and natural isomorphisms that allow us to compare the notions of contextuality formulated in two languages.
We are motivated by applications to the resource theory of contextuality, captured by the morphisms in these categories.
In this paper, we develop the main formalism and leave applications to future work.
\end{abstract}

\tableofcontents

\section{Introduction}

This paper develops a bundle perspective on contextuality that generalizes the sheaf-theoretic~\cite{abramsky2011sheaf} and the simplicial~\cite{okay2022simplicial} approaches.
In \cite{karvonen2018categories,barbosa2023closing}, the former approach is used to construct the category $\catScen$ of scenarios, whose morphisms capture a notion of simulation corresponding to the free transformations in a resource theory of contextuality.
In this paper, we introduce two categories: (1) the category $\catbScen$ of bundle scenarios based on simplicial complexes, and
(2) the category $\catsScen$ of simplicial bundle scenarios based on simplicial sets. 
The use of bundles of simplicial complexes to represent measurements and outcomes in contextuality scenarios already appears in the literature; see \cite{abramsky2015contextuality,beer2018contextuality,terra2019measures}. 
However, the notion of ``bundle" is used somewhat informally and not fully formalized.
We provide an abstract definition of a bundle scenario and define morphisms between such bundle scenarios extending the category of scenarios $\catScen$. 
The other category $\catsScen$ is based on simplicial sets, combinatorial objects representing spaces more expressive than simplicial complexes.
Our motivation for such extensions is to translate the resource-theoretic analysis developed in \cite{karvonen2018categories,abramsky2019comonadic,barbosa2023closing,karvonen2021neither} to bundle scenarios of simplicial complexes and ultimately to bundle scenarios of simplicial sets.  
The extension to simplicial sets carries many advantages for studying contextuality; see~\cite{okay2022simplicial,kharoof2022simplicial,
okay2022mermin,kharoof2023topological}. It offers promising applications in extending cohomological frameworks for measurement-based quantum computation developed in~\cite{raussendorf2016cohomological,raussendorf2023putting}.

This paper focuses on setting the formal stage, relating the three categories of scenarios and distributions on them.
Resource-theoretic implications will be studied in upcoming work.
More formally, we construct fully faithful functors
\[\eE \colon \catScen \to \catbScen \qquad \text{ and } \qquad N \colon \catbScen \to \catsScen\]
relating the three categories of scenarios and natural isomorphisms $\eta$ and $\zeta$ relating the empirical models and simplicial distributions on them:
\begin{equation}\label{dia:-eta-zeta}
\begin{tikzcd}
\catScen \arrow[rd,"\eE"] \arrow[dd,"\Emp"',""{name=A,right}] &  \\
&  \catbScen \arrow[Rightarrow, from=A,"\eta"] \arrow[dl,"\bEmp"]  \\
\catSet&
\end{tikzcd}
\;\;\;\;\;\;\;\;\;
\;\;\;\;\;\;\;\;\;
\begin{tikzcd}
\catbScen \arrow[rd,"N"] \arrow[dd,"\bEmp"',""{name=A,right}] &  \\
&  \catsScen \arrow[Rightarrow, from=A,"\zeta"] \arrow[dl,"\sDist"]  \\
\catSet &
\end{tikzcd}
\end{equation}

In the sheaf-theoretic framework, a scenario is described by a pair $(\Sigma,O)$ consisting of a simplicial complex $\Sigma$ and a family $O=\set{O_x}_{x\in \Sigma_0}$ of sets indexed by the vertices of the simplicial complex.
Each vertex $x$ represents a measurement with outcomes in the set $O_x$, and each simplex $\sigma\in \Sigma$ represents a measurement context, a set of measurements that can be jointly performed.
It is convenient to represent the simplicial complex $\Sigma$ as a category $\catC_\Sigma$
whose objects are the simplices of $\Sigma$ and the morphisms are given by inclusions.
Then both measurements and the corresponding outcomes can be described as a functor
$$
\eE_O:\catC_\Sigma \to \catSet
$$
that sends a measurement context $\sigma$ to the set $\eE_O(\sigma)=\prod_{x\in \sigma} O_x$ of all possible outcome assignments. 
A morphism $(\Sigma,O) \to (\Sigma',O')$ between two scenarios
%The key component of the category of scenarios is the notion of a morphism
%$
%(\Sigma,O) \to (\Sigma',O')
%$ 
%between two scenarios. Such a morphism 
consists of two parts: a functor $\pi:\catC_{\Sigma'}\to \catC_\Sigma$ induced by a simplicial relation that relates the measurement contexts, and a natural transformation $\alpha:\eE_O\circ \pi \to \eE_{O'}$ that specifies a mapping of outcomes over each context.
Since measurements in a context can be simultaneously performed, outcome statistics are represented by a probability distribution $e_\sigma$ on the set $\eE_O(\sigma)$ of joint outcomes. The family $e=\set{e_\sigma}_{\sigma\in \Sigma}$ of distributions, called an empirical model, is required to satisfy an additional condition known as the non-signaling (or no-disturbance) condition: the measurement statistics over two contexts should 
match on their intersection.
Morphisms of the category of scenarios are defined so that an empirical model $e$ on $(\Sigma,O)$ can be pushed forward along a morphism to produce a new empirical model on $(\Sigma',O')$. Operationally this coincides with the notion of simulating the empirical model $e$ with the new one. By the push-forward construction, empirical models on scenarios can be turned into a functor
$$
\Emp: \catScen \to \catSet
$$
Since we can consider probabilistic mixtures of empirical models, the target can be replaced by the category $\catConv_R$ of convex sets for our choice of a semiring $R$, which is usually taken to be non-negative reals.
 
\begin{figure}[h!]
\centering
\includegraphics[width=.7\linewidth]{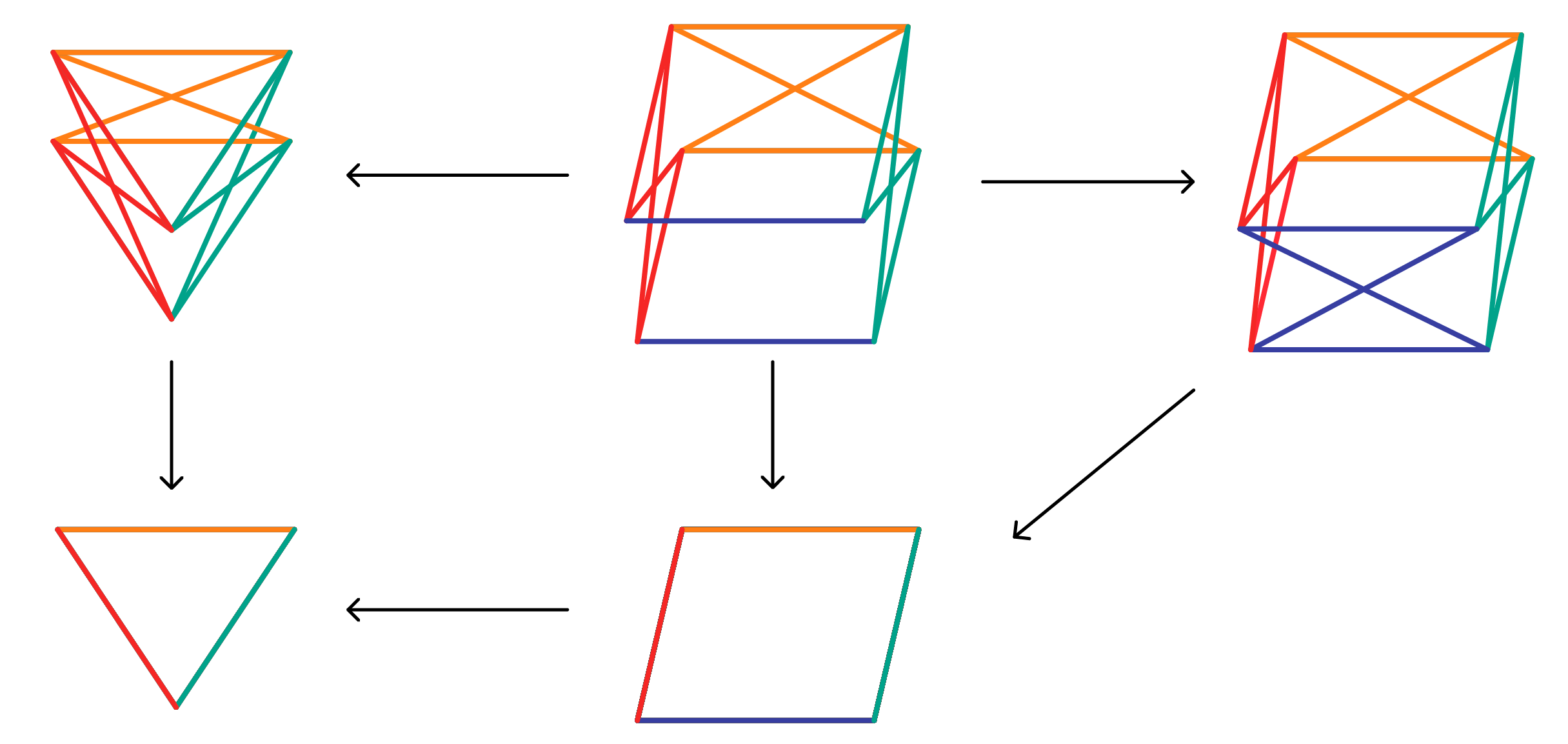}
\caption{{A morphism  between two bundle scenarios on a triangle and a square.}
}
\label{fig:bundle}
\end{figure} 
 
Our first contribution in this paper is the notion of bundle scenario: A map $f:\Gamma\to \Sigma$ of simplicial complexes is a bundle scenario if it is surjective and satisfies the following two properties:
\begin{itemize}
\item  Locally surjectivity: The restriction of $f$ to the star of each simplex of $\Sigma$ is surjective.
\item Discrete over vertices: Every two vertex of $\Gamma$ that map to the same vertex {under $f$ does not form an edge}.
\end{itemize}
To a scenario $(\Sigma,O)$ one can associate a bundle scenario
$$\eE_O(\Sigma)\to \Sigma$$
by assembling the outcomes over each simplex into a simplicial complex that maps down to the simplicial complex of measurement contexts. 
Morphisms in bundle scenarios are described using diagrams of simplicial complexes; see Figure (\ref{fig:bundle}). {Empirical models can be generalized to bundle scenarios and can be assembled into a functor
$$
\bEmp: \catbScen \to \catSet .
$$
}

These constructions in the category of simplicial complexes can be carried over to simplicial sets, which are suitable for generating spaces with more elaborate identifications.
A simplicial set is specified by a sequence of sets $\set{X_n}_{n\geq 0}$ representing the simplicies together with two kinds of maps for gluing and collapsing simplices.
The notions of local surjectivity and being discrete over vertices can be generalized to simplicial sets giving rise to the notion of simplicial bundle scenarios: A simplicial set map $f:E\to X$ is called a simplicial bundle scenario if it is surjective, locally surjective and discrete over vertices.
A source of examples is obtained by applying the nerve construction, the functor $N$ in Diagram (\ref{dia:-eta-zeta}), to a bundle scenario.  
One of the benefits of extending the category of scenarios first to bundle scenarios and then the simplicial scenarios is that the description of morphisms simplifies. A morphism $f\to f'$ between two simplicial bundle scenarios is a commutative diagram
$$
\begin{tikzcd}[column sep=huge,row sep=large]
E \arrow[d,"f"] &  \pi^*(E)  \arrow[l] \arrow[d] \arrow[r,"\alpha"] & E' \arrow[d,"f'"] \\
X & \arrow[l,"\pi"] X' \arrow[r,equal] & X' 
\end{tikzcd}
$$
where the left square is a pull-back. This diagram is similar to Figure (\ref{fig:bundle}). However, in the simplicial complex case, an additional complication is added by the use of the nerve complex functor $\hat N$. This functor is analogous to the nerve functor $N$, but produces a simplicial complex instead of a simplicial set. Intuitively its role can be understood by observing that simplicial complex maps $\Sigma' \to \hat N(\Sigma)$ coincide with simplicial relations.
{We introduce simplicial distributions on simplicial bundle scenarios generalizing empirical models even further giving us a functor
$$
\sDist: \catsScen \to \catSet
$$
The definition of a simplicial distribution is formulated in a diagrammatic way (see Definition \ref{def:simp-dist-revisited}) rather than using the language of sheaves.
}

Our contributions can be summarized as follows:
\begin{itemize}
%\item Our main result of the paper, given in Theorem \ref{thm:main-bundle} and Theorem \ref{thm:main-simplicial}, is that the functors in Diagram \ref{dia:E-N} are fully faithful and the natural transformations in Diagram \ref{dia:-eta-zeta} are natural isomorphisms.

\item We introduce bundle scenarios in Definition \ref{def:S-LS-DV-BS}. For any scenario $(\Sigma,O)$ we show that $\eE_O(\Sigma)\to \Sigma$ is a bundle scenario (Proposition \ref{pro:canonical-bundle}). Morphisms between bundle scenarios are introduced in Definition \ref{def:bScen}. An important perspective here is the use of the nerve complex functor $\hat N$ given in Definition \ref{def:hatN} to interpret simplicial relations.

\item Empirical models for bundle scenarios are introduced in Definition \ref{def:emprical-bundle}. Non-signaling conditions are formulated using the property of being discrete over vertices. We construct the push-forward distribution along a morphism in Definition \ref{def:push-forward-bundle-mor}. Our main result in this section, {Theorem \ref{thm:main-bundle}},  states that the functor $\eE$ in Diagram (\ref{dia:-eta-zeta}) is fully faithful and $\eta$ is a natural isomorphism.

\item Simplicial bundle scenarios are introduced in Definition \ref{def:simplicial-DV-LS-BS}. We show that $Nf:N\Gamma \to N\Sigma$ is a simplicial bundle scenario whenever $f:\Gamma\to \Sigma$ is a bundle scenario (Proposition \ref{pro:nerve-pres-bundle-scen}).
Morphisms of simplicial bundle scenarios are easier to define than morphisms of bundle scenarios (Definition \ref{def:sScen}).

\item Empirical models can be generalized to the simplicial set framework to give rise to simplicial distributions (Definition \ref{def:simp-dist-revisited}). %Their definition differs from empirical models in that the non-signaling conditions are expressed as lifting conditions. 
Theorem \ref{thm:main-simplicial} is our main result on simplicial bundle scenarios stating that the functor $N$ in Diagram (\ref{dia:-eta-zeta}) is fully faithful and $\zeta$ is a natural isomorphism.

\item For the discussion of contextuality, we use the theory of convex categories developed in \cite{kharoof2022simplicial}. 
The key ingredient is the observation that in Diagram (\ref{dia:-eta-zeta}), the  categories of scenarios can be replaced by the corresponding free convex categories:
\begin{equation}\label{dia:upgraded}
\begin{tikzcd}
D_R(\catScen) \arrow[rd,"D_R(\eE)"] \arrow[dd,"\widetilde\Emp"',""{name=A,right}] &  \\
&  D_R(\catbScen) \arrow[Rightarrow, from=A,"\tilde \eta"] \arrow[dl,"\bEmp"]  \\
\catConv_R&
\end{tikzcd}
\;\;\;\;\;\;\;\;\;
\;\;\;\;\;\;\;\;\;
\begin{tikzcd}
D_R(\catbScen) \arrow[rd,"D_R(N)"] \arrow[dd,"\widetilde\bEmp"',""{name=A,right}] &  \\
&  D_R(\catsScen) \arrow[Rightarrow, from=A,"\tilde\zeta"] \arrow[dl,"\widetilde\sDist"]  \\
\catConv_R &
\end{tikzcd}
\end{equation}
In Corollary \ref{cor:upgrade-convex} we show that $D_R(\eE)$ and $D_R(N)$ are fully faithful. Moreover, $\tilde \eta$ and $\tilde \zeta$ are natural isomorphisms.

\item We introduce contextuality for the three categories of scenarios in Definition \ref{def:Context} and Definition \ref{def:Contextbbbb}. 
Theorem \ref{thm:contextuality} shows that these three notions coincide when specialized to the target of the natural isomorphisms $\tilde \eta$ and $\tilde \zeta$. 
\end{itemize}

Most of the technical results are proved in the Appendix. Section \ref{sec:simplicial-complexes} begins with a description of the conditions of local surjectivity (Proposition \ref{pro:equivalent-LS}) and being discrete over vertices (Proposition \ref{pro:equivalent-DV}) in terms of liftings of diagrams. In Section \ref{subsec: Pullback}, we describe pull-backs of %simplicial complex maps 
{bundle scenarios}
and show that they behave particularly well (Proposition \ref{pro:pull-back-bundle}). In Proposition \ref{pro:N-bundle-sce}, we show that the $\hat N$ functor preserves bundle scenarios.
Section \ref{sec:lem-category-bundle-scen} contains technical results on the interaction of pull-backs with the $\hat N$ functor and Lemma \ref{lem:bScen-is-well-defined}, which shows that the composition rule defined for the category of bundle scenarios is associative. Section \ref{sec:lem-embedding-into-bundle-scen} contains technical results that are used for proving that $\eE$ is fully faithful (Proposition \ref{pro:fully-faithful-bundle}). Section \ref{sec:lem-empirical-bundle-scen} is concerned with constructing empirical models on $\hat N$ of a bundle scenario and how to push forward empirical models along morphisms.
In Section \ref{sec:simplicial-sets}, we begin by discussing the basic properties of the nerve functor $N$. In Section \ref{sec:lem-categoryof-simplicial-scen}, we prove that $N$ preserves local surjectivity (Lemma \ref{lem:nerve-S-LS}) and being discrete over vertices (Lemma \ref{lem:nerve-pre-dv}). Section \ref{sec:lem-embedding-into-simplicial-scen} and Section \ref{sec:lem-simplicial-dist} contain technical results for proving that $N$ is a fully faithful functor (Proposition \ref{pro:fully-faithful-simplicial}) and $\zeta$ a natural isomorphism (Proposition \ref{pro:natural-iso-simplicial}).
In Section \ref{sec:ConvCat} we recall some facts from \cite{kharoof2022simplicial} concerning convex categories.
A basic construction is the free convex category given in Definition \ref{def:Free CatFun}.
An important result, which is also of independent interest, is Proposition \ref{pro:ConvisConv} stating $\catConv_R$ is an $R$-convex category.
 Diagram (\ref{dia:upgraded}) is obtained by a general observation (Proposition \ref{pro:Nattrans}) on liftings of natural transformations to convex categories.

\paragraph{Acknowledgments.}
RSB and CO are supported by the Digital Horizon Europe project FoQaCiA, GA no. 101070558, AK and CO by the US Air Force Office of Scientific Research under award number FA9550-21-1-0002, and RSB by FCT -- Funda\c{c}\~ao para a Ci\^encia e a Tecnologia through CEECINST/00062/2018.
 
\section{The category of scenarios} \label{sec:category-of-scenarios}

In this section we introduce the category $\catScen$ of scenarios and the functor $\Emp:\catScen \to \catSet$ of empirical models following the presentation in \cite{barbosa2023closing}.

A {\it scenario}  consists of a pair $(\Sigma,O)$, where
\begin{itemize}
\item $\Sigma$ is a simplicial complex whose vertex set $\Sigma_0$ represents measurements and its simplices represent contexts,
\item $O$ is a family $\set{O_x}_{x\in \Sigma_0}$ of nonempty sets representing the outcomes of each measurement. 
\end{itemize} 
We can think of $\Sigma$ as a category $\catC_\Sigma$ whose objects are the simplices $\sigma \in \Sigma$ and morphisms are inclusions $\sigma' \hookrightarrow \sigma$. 
On this category we can define the event presheaf, that is, the contravariant functor 
$$\eE_{O}: \catC_\Sigma \to \catSet$$ 
defined by
$
\eE_{O}(\sigma) = \prod_{x\in \sigma} O_x
$.
{For $s\in \eE_{O}(\sigma)$  we write $s|_{\sigma'}$ for the tuple in $\eE_{O}(\sigma')$ obtained by restricting to the indices from the subset.}
Sometimes for we will write $\eE_{(\Sigma,O)}$ {for this functor} to indicate the underlying simplicial complex.

To define morphisms {between scenarios}, 
%in this category 
we need the notion of simplicial relation.
For a relation $\pi\subset \Sigma_0'\times \Sigma_0$ we will write 
$$
\pi(x')=\set{x\in \Sigma_0:\, (x',x)\in \pi} \;\;\text{ and }\;\;
\pi(\sigma')=\cup_{x'\in \sigma'} \pi(x').
$$
A simplicial relation $\pi:\Sigma'\to \Sigma$ is a relation $\pi\subset \Sigma'_0\times \Sigma_0$ such that  
%A morphism between two scenarios consists of two components. One of these components is the notion of a simplicial relation.
%For a relation $\Pi\subset \Sigma_0'\times \Sigma_0$ we will write 
%$$
%\Pi(x')=\set{x\in \Sigma_0:\, (x',x)\in \Pi} \;\;\text{ and }\;\;
%\Pi(\sigma')=\cup_{x'\in \sigma'} \Pi(x').
%$$
%Let $R\subset \Sigma_0'\times \Sigma_0$ be a relation.
%We will write $R(x')$ to denote the set $\set{x\in \Sigma_0:\, (x',x)\in R}$, and  $R(\sigma')$ for the union $\cup_{x'\in \sigma'} R(x')$.
%A simplicial relation, denoted by $\pi:\Sigma'\to \Sigma$, is a relation $\Pi\subset \Sigma_0'\times \Sigma_0$ such that 
$\pi(\sigma')\in \Sigma$ for all $\sigma'\in \Sigma'$. 
A simplicial relation induces a functor 
$\pi:\catC_{\Sigma'}\to \catC_\Sigma$.  
%Observe that such a functor comes from a simplicial relation if and only if $\pi(\sigma')=\cup_{x' \in \sigma'} \pi(\set{x'})$ for all $\sigma' \in \Sigma'$. 

\begin{defn}\label{def:scen}
{\rm
The {\it category $\catScen$ of scenarios} consists of objects given by scenarios $(\Sigma,O)$ and a morphism $(\Sigma,O) \to (\Sigma',O')$ between two scenarios is given by a pair $(\pi,\alpha)$ where
\begin{itemize}
\item $\pi:\Sigma' \to \Sigma$ is a simplicial relation.
\item $\alpha:\eE_{O}\circ \pi \to \eE_{O'}$ is a natural transformation of functors.
\end{itemize} 
}
\end{defn}

The composition of the morphisms $(\pi,\alpha):(\Sigma,O)\to (\Sigma',O')$ and $(\pi',\alpha'):(\Sigma',O')\to (\Sigma'',O'')$ 
is given by  
\begin{equation}\label{eq-scen-composition}
(\pi',\alpha')\circ (\pi,\alpha)= (\pi \circ \pi' , \alpha' \circ (\alpha \ast \Id_{\pi'})).
\end{equation}
where
\begin{itemize}
\item $\pi \circ \pi':\Sigma''\to \Sigma$ denotes the simplicial relation obtained {by} the composition of the corresponding functors, and
\item $\alpha \ast \Id_{\pi'}:\eE_{O}\circ (\pi\circ \pi')\to {\eE_{O'}  \circ \pi ' }$ denotes the horizontal composition of natural transformations \cite[Section 1.7]{riehl2017category}.
\end{itemize}

\begin{figure}[h!]
\centering
\begin{subfigure}{.49\textwidth}
  \centering
  \includegraphics[width=.5\linewidth]{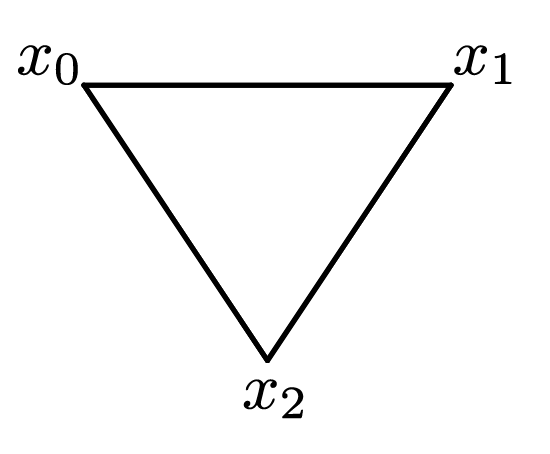}
  \caption{}
  \label{fig:triangle}
\end{subfigure}%
\begin{subfigure}{.49\textwidth}
  \centering
  \includegraphics[width=.5\linewidth]{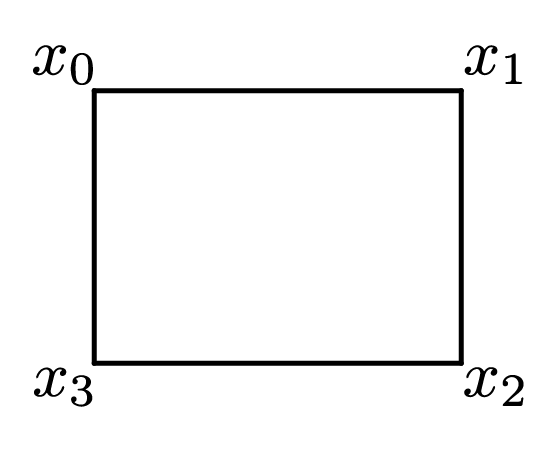}
  \caption{}
  \label{fig:square}
\end{subfigure}%
\caption{
}
\label{fig:example}
\end{figure}

\Ex{\label{ex:triagle-square}
Let $\ZZ_2=\set{0,1}$ denote the two-outcome set.
We will consider a morphism $(\pi,\alpha): (\triangle,O) \to (\square,O)$ where
\begin{itemize}
\item $\triangle$ is the simplicial complex with vertex set $\set{x_0,x_1,x_2}$ {(Figure \ref{fig:triangle})} and maximal simplices
$$
\set{x_0,x_1},\;\; \set{x_1,x_2},\;\; \set{x_2,x_0};
$$ 
\item $\square$ is the simplicial complex with vertex set $\set{x_0,x_1,x_2,x_3}$ {(Figure \ref{fig:square})}  and maximal simplices
\begin{equation}\label{eq:Sq-simplices}
\set{x_0,x_1},\;\; \set{x_3,x_0},\;\; \set{x_1,x_2},\;\; \set{x_2,x_3}.
\end{equation}
\end{itemize}
The outcome sets $O_x$ are given by $\ZZ_2$ for the vertices of $\triangle$ and $\square$.  
The simpicial relation $\pi$ is a simplicial complex map defined by  
$$
\pi(x_i) = \left\lbrace
\begin{array}{ll}
x_i & i=0,1 \\
x_2 & i=2,3,
\end{array}
\right.
$$
and $\alpha_x:\eE_O(\pi(x)) \to \eE_O(x)$ is the identity map on $\ZZ_2$.  
}

\subsection{Empirical models}
{Throughout the paper $R$ will denote a zero-sum-free commutative semiring. The zero-sum-free condition means that $a+b=0$ implies $a=b=0$ for $a,b\in R$.}
Let $D_R:\catSet\to \catSet$ denote the distribution monad \cite{jacobs2010convexity}. Under this functor a set $U$ is sent to the set $D_R(U)$ of functions $p:U\to R$ of finite support such that $\sum_{u\in U} p(u)=1$. For $f:U\to V$ the function $D_R(f): D_R(U)\to D_R(V)$ is defined by 
$$
D_R(f)(p)(v) = \sum_{u\in f^{-1}(v)} p(u).
$$
An {\it empirical model} $e$ on the scenario $(\Sigma,O)$ is a family $\set{e_\sigma \in 
D_R \circ \eE_O(\sigma) }_{\sigma\in \Sigma}$ of distributions that satisfies the compatibility condition: for each inclusion $i:\sigma' \hookrightarrow \sigma$ of simplices we have
$$
e_{\sigma'} = e_\sigma|_{\sigma'}
$$
where $e_\sigma|_{\sigma'}$ stands for $(D_R \circ \eE_O)(i)(e_\sigma)$. 
We denote the set of empirical models on $(\Sigma,O)$ by $\Emp(\Sigma,O)$.

\Def{\label{def:emp}
%\ak{[[I replaced $f$ by $(\pi,\alpha)$, because one can confused with the use of $f$ later]].}
 Let $e$ be an empirical model on the scenario $(\Sigma,O)$.
Given a morphism $(\pi,\alpha):(\Sigma,O)\to (\Sigma',O')$
the {\it push-forward empirical model} $(\pi,\alpha)_*e$ on $(\Sigma',O')$ is defined by
$$
((\pi,\alpha)_*e)_{\sigma'} = D_R(\alpha_{\sigma'})(e_{\pi(\sigma')})
$$
where $\alpha_{\sigma'}: \eE_{O}(\pi(\sigma')) \to \eE_{O'}(\sigma')$ is the component of the natural transformation $\alpha$ at the simplex $\sigma'{\in \Sigma'}$.
}

The push-forward construction gives a function $(\pi,\alpha)_*:\Emp(\Sigma,O)\to \Emp(\Sigma',O')$.

\Pro{[\!\cite{barbosa2023closing}]
\label{pro:Emp} 
The assignments $(\Sigma,O)\mapsto \Emp(\Sigma,O)$ and $(\pi,\alpha) \mapsto (\pi,\alpha)_*$ define a functor $\Emp:\catScen \to \catSet$.
} 
%\Proof{
%\comm{to be added}
%} 

\Ex{\label{ex:empirical}
{Let us consider the morphism $(\pi,\alpha):(\triangle,O)\to (\square,O)$ in Example \ref{ex:triagle-square}.}
Given an empirical model on $(\triangle,O)$, the push-forward empirical model $(\pi,\alpha)_*e$ {is given by
$((\pi,\alpha)_*e)_{\sigma'}=e_{\pi(\sigma')}$} when $\sigma'$ is one of the first three simplices in Equation (\ref{eq:Sq-simplices}), and for the last one it is given by
$$
((\pi,\alpha)_*e)_{\set{x_2,x_3}}(a,b) = \left\lbrace
\begin{array}{ll}
e_{\set{x_2}}(a) & a=b \\
0 & \text{otherwise.} 
\end{array}
\right.
$$
}

\section{The category of bundle scenarios} \label{sec:bScen}

%\comm{Section intro...}

{
In this section we introduce bundle scenarios of simplicial complexes and morphisms between them.
The resulting category $\catbScen$ is the category of bundle scenarios. We extend empirical models to bundle scenarios and assemble them into a functor $\Emp:\catbScen \to \catSet$. Our main result is Theorem \ref{thm:main-bundle} proving that in the left-hand diagram in (\ref{dia:-eta-zeta}) the functor $\eE$ is fully faithful and $\eta$ is a natural isomorphism.
Most of the technical results concerning simplicial complexes are presented in Section \ref{sec:simplicial-complexes}.
}

In the context of simplicial complexes   locality is  captured by the star construction:
The {\it star of a simplex} $\sigma\in \Sigma$ is defined by
$$
\St(\sigma) = \set{\sigma'\in \Sigma:\, \sigma\subset \sigma'}.
$$
 
\Def{\label{def:S-LS-DV-BS}
A map of $f:\Gamma\to \Sigma$ simplicial complexes  is called
\begin{itemize}
\item {\it surjective} if it is surjective as a function between the set of simplices,
\item {\it locally surjective} if $f|_\gamma:\St(\gamma) \to \St(f(\gamma))$ is surjective for all $\gamma\in \Gamma$,
\item {\it discrete over vertices} 
if $\{x,y\} \notin \Gamma$ for every distinct $x,y \in \Gamma_0$ with 
$f(x)=f(y)$.
\end{itemize}
A {\it bundle scenario} is a surjective, locally surjective map $f: \Gamma \to \Sigma$ of simplicial complexes that is discrete over vertices.
}
The {two properties, local surjectivity and being discrete over vertices,}
%last two properties in Definition \ref{def:S-LS-DV-BS} 
can be characterized using lifting conditions (Propositions \ref{pro:equivalent-LS} and \ref{pro:equivalent-DV}). {Therefore bundle scenarios can also be described in terms of lifting conditions (Corollary \ref{cor:equivalent-bundle}).  
This point of view will be fruitful when extending these notions to simplicial sets in Section \ref{sec:category-simplicial-scenarios}.}

Our canonical example of a bundle scenario comes from a scenario $(\Sigma,O)$.  
Associated to this scenario we can construct a bundle scenario
$$f_{(\Sigma,O)}:\eE_O\Sigma \to \Sigma$$
where   $\eE_O\Sigma$ is the simplicial complex with vertex set $(\eE_O\Sigma)_0= \sqcup_{x\in \Sigma_0} O_x$ and the set of simplices 
$$
\eE_O\Sigma=\set{(\sigma,s):\, \sigma\in \Sigma,\; s\in \prod_{x\in \sigma} O_x}.
$$ 
Each simplex $(\sigma,s)$ is regarded as the subset $\sqcup_{x\in \sigma} s(x)\subset (\eE_O\Sigma)_0$.
The simplicial complex map $f_{(\Sigma,O)}$ is given by  projection onto the first coordinate.

\Pro{\label{pro:canonical-bundle} The simplicial complex map
$f_{(\Sigma,O)}:\eE_O\Sigma \to \Sigma$ is a bundle scenario.
}
\Proof{ 
The outcome sets $O_x$ being nonempty implies that $f=f_{(\Sigma,O)}$  is surjective.
For a simplex $(\sigma,s)$ consider the map between the stars
$$
f_{(\sigma,s)}: \St(\sigma,s) \to \St(\sigma).
$$
Given an element of $\St(\sigma)$, that is a simplex $\sigma'$ containing $\sigma$, consider $s'\in \prod_{x\in \sigma'}O_{x}$ such that $s'|_\sigma =s$. Then $(\sigma',s') \in \St(\sigma,s)$ and {it} maps to $\sigma'$ under $f_{(\sigma,s)}$. {This} implies that $f$ is locally surjective.
Finally,   $f$ is discrete over vertices since for $x\in \Sigma_0$ and distinct $o,o' \in O_x$ we have $\{(x,o),(x,o')\} \notin \eE_O\Sigma$.
%Surjectivity of $f=f_{(\Sigma,O)}$ follows from the assumption that each $O_x$ is nonempty.
%, where $x \in \Sigma_0$.     
%To show that $f$ is locally surjective, consider $(\sigma,s) \in \eE_O\Sigma$ and $\sigma' \in \St(\sigma)$. 
%Let $s' \in \prod_{x\in \sigma'} O_x$  be an extension of $s$. Then for $(\sigma',s') \in \St(\sigma,s)$ we have $f(\sigma',s')=\sigma'$. Hence the map $f|_{(\sigma,s)}$ between the stars is surjective.
%Finally,   $f$ is discrete over vertices since for $x\in \Sigma_0$ and distinct $o,o' \in O_x$ we have $\{(x,o),(x,o')\} \notin \eE_O\Sigma$.
}

There is a category-theoretic description of $\eE_O\Sigma$ that uses the notion of the category of elements of a functor.

\Def{\label{def:category-of-elements}
Given a contravariant functor $F:\catC\to \catSet$ the {\it category of elements}, denoted by $\catEl(F)$, has objects given by pairs $(c,x)$ where $c$ is an object of $\catC$ and $x\in F(c)$. 
A morphism $(c',x') \to (c,x)$ is given by a morphism $f:c'\to c$ of $\catC$ such that $F(f)(x)=x'$. 
}

When this construction is applied to the functor $\eE_O:\catC_\Sigma \to \catSet$ associated to  a scenario $(\Sigma,O)$ the category $\catEl(\eE_O)$ of elements  {coincides with $\catC_{\eE_O\Sigma}$.}
%\begin{equation}\label{eq:cat-el-for-scen}
%\catEl(\eE_O)=\catC_{\eE_O\Sigma}.
%\end{equation}

\subsection{Morphisms of bundle scenarios}\label{sec:morphisms-bundle-scen}

{
To introduce morphisms between bundle scenarios we need an alternative point of view on the notion of simplicial relation introduced in Section \ref{sec:category-of-scenarios}.
The key construction is the nerve complex functor 
$$\hat N:s\catComp \to s\catComp$$ 
defined on the category of simplicial complexes:

\Def{\label{def:hatN}
{\rm
Given a simplicial complex $\Sigma$ the {\it nerve complex} $\hat N \Sigma$ is the simplicial complex defined as follows:
\begin{itemize}
\item The vertex set is given by $(\hat N\Sigma)_0=\Sigma$.
\item The set of simplices is $\hat N\Sigma = \set{ \set{\sigma_1,\sigma_2,\cdots,\sigma_k}:\, \cup_{i=1}^k\sigma_i \in \Sigma }$.
\end{itemize}
A simplicial complex map $f:\Gamma\to \Sigma$ induces a simplicial complex map $\hat Nf:\hat N\Gamma \to \hat N\Sigma$ between the nerve complexes defined on vertices by
$(\hat Nf)(\gamma)=f(\gamma)$.
%Let $\hat N:s\catComp \to s\catComp$ denote the functor that sends a simplicial complex $\Sigma$ to a simplicial complex $\hat N\Sigma$ with vertex set $(\hat N\Sigma)_0=\Sigma$ and the simplex set
%$$
%\hat N\Sigma = \set{ \set{\sigma_1,\sigma_2,\cdots,\sigma_k}:\, \cup_{i=1}^k\sigma_i \in \Sigma }.
%$$
%A simplicial complex map $f:\Gamma\to \Sigma$ induces $\hat Nf:\hat N\Gamma \to \hat N\Sigma$, a simplicial complex map  defined on vertices by
%$(\hat Nf)(\gamma)=f(\gamma)$ and on simplices by $(\hat Nf)(\{\gamma_1,\gamma_2,\cdots,\gamma_n\}) = \{f(\gamma_1),f(\gamma_2),\cdots,f(\gamma_n)\}$.
}}

%This functor is a monad \cite{mac2013categories}, thus is equipped with natural maps $\delta_\Sigma:\Sigma \to \hat N\Sigma$ and $\mu_\Sigma: \hat N^2\Sigma \to \hat N\Sigma$ \aak{such that for a vertex $x$ in $\Sigma$ we have $\delta_{\Sigma}(x)=\{x\}$, and for a vertex $\{\sigma_1,\dots,\sigma_n\}$ in $\hat N^2 \Sigma$ we have
%  $\mu_{\Sigma}(\{\sigma_1,\dots,\sigma_n\})=\cup_{i=1}^n\sigma_i$}.

The functor $\hat N$ is in fact a monad, i.e., comes with two natural transformations: 
\begin{itemize}
\item  $\delta:\idy \to \hat N$ whose component $\delta_\Sigma:\Sigma \to \hat N\Sigma$ is defined by $\delta_{\Sigma}(x)=\{x\}$, and
\item $\mu:\hat N\circ \hat N \to \hat N$ whose component $\mu_\Sigma: \hat N^2\Sigma \to \hat N\Sigma$ is defined by $\mu_{\Sigma}(\{\sigma_1,\dots,\sigma_n\})=\cup_{i=1}^n\sigma_i$.
\end{itemize}
Associated to a monad one can consider the Kleisli category \cite{mac2013categories}. For the nerve complex functor the Kleisli category $\catsComp_{\hat N}$ consists of simplicial complexes as objects and its morphisms are given by
$$
\catsComp_{\hat N}(\Sigma',\Sigma) = \catsComp(\Sigma',\hat N\Sigma).
$$
%Therefore the morphism sets can be identified with simplicial relations by Proposition \ref{pro:simplicial-rel-Kleisli}.
%the morphisms in the Kleisli category $\catsComp_{\hat N}$ captures simplicial relations.

\Pro{\label{pro:simplicial-rel-Kleisli}
The set of simplicial relations $\pi:\Sigma'\to \Sigma$ is in bijection with the set of morphisms $\Sigma'\to \Sigma$ in the Kleisli category $\catsComp_{\hat N}$.
}
\Proof{
A simplicial complex map $\Sigma'\to \hat N\Sigma$ is given by a function $\pi:\Sigma'_0\to \Sigma$ such that $\pi(\sigma')=\cup_{x'\in \sigma'} \pi(x')$ is a simplex of $\Sigma$ for all $\sigma'\in \Sigma'$. This is precisely a simplicial relation.
}

We will regard simplicial relations as morphisms in the Kleisli category.  
Given  simplicial complex maps   $\pi:\Sigma'\to \hat N \Sigma$ and  $\pi':\Sigma''\to \hat N\Sigma'$ we write $\pi\kleisli \pi':\Sigma''\to \hat N\Sigma$ for the Kleisli composition, i.e., the composition in the category $\catsComp_{\hat N}$.

%such that for $\sigma'\in \Sigma'$ we have
% 
%\begin{equation}\label{eq:pibar}
%\overline{\pi}(\sigma')=\cup_{
%\sigma \in \pi(\sigma')} \sigma
%\end{equation}
%
%We have the following role:
%\begin{equation}\label{eq:pibarComp}
%\overline{\pi \kleisli \pi'}=
%\overline{\pi} \circ \overline{\pi'}
%\end{equation}
%
The final ingredient for the definition of morphisms between bundle scenarios is pull-backs of bundle scenarios. In the category of simplicial complexes the pull-back of a bundle scenario has a particularly nice description (Proposition \ref{pro:pull-back}).
%Pull-backs of a bundle scenario has a particularly nice description.
%We will be using some basic facts proved in Section \ref{subsec: Pullback}.
}

The morphisms of  
the category $\catbScen$ 
are constructed from two types of morphisms:
\begin{itemize}
\item A {\it type I morphism} $(\pi,\idy):f\to f^\pi$ is given by a pull-back square 
%\aak{(see Subsetion \ref{subsec: Pullback} and Definition \ref{def:hatN})} 
\begin{equation}\label{dia:typeI}
\begin{tikzcd}[column sep=large,row sep=large]
\hat N  \Gamma \arrow[d,"\hat N f"'] & \Gamma' \arrow[d," f^\pi"] \arrow[l,"l"'] \\
\hat N \Sigma & \Sigma' \arrow[l,"\pi"]
\end{tikzcd} 
\end{equation}
To indicate that this is a pull-back we will write 
$\Gamma'=\pi^* (\hat N \Gamma)$, and sometimes we will use the notation $l_f$ or even $l_{f,\pi}$ instead of $l$.
\item A {\it type II morphism} $\alpha:f\to f'$ is given by a commutative diagram
\begin{equation}\label{dia:typeII}
\begin{tikzcd}
\Gamma  
 \arrow[rd,"f"'] \arrow[rr,"\alpha"]&& \Gamma'
 \arrow[dl,"f'"] \\
& \Sigma  & 
\end{tikzcd}
\end{equation}
\end{itemize}
The pull-back of a bundle scenario is also a bundle scenario (Proposition \ref{pro:pull-back-bundle}). The simplicial complex map $\hat Nf$ in Diagram (\ref{dia:typeI}) is a bundle scenario when $f$ is a bundle scenario (Proposition \ref{pro:N-bundle-sce}). Therefore $f^\pi$ is {also a} bundle scenario.
% in the same diagram.

The composition of type II morphisms is clear, {whereas for type I morphisms we have the following definition.}

\begin{defn}\label{def:composition-T1}
{\rm
The composition of the following type I morphisms 
\begin{equation*} 
\begin{tikzcd}[column sep=large,row sep=large]
\hat N \Gamma  
 \arrow[d,"\hat N f"'] &  \Gamma' 
 \arrow[d,"f'"] \arrow[l,"l"']& \hat N\Gamma'   \arrow[d,"\hat N f'"'] & \Gamma''
 \arrow[d,"f''"] \arrow[l,"l'"']\\
\hat N\Sigma  & \Sigma' \arrow[l,"\pi"] & \hat N \Sigma' & \Sigma'' \arrow[l,"\pi'"]
\end{tikzcd}
\end{equation*}
is defined to be the composition of the following squares
\begin{equation}\label{diag:composition-T1}
\begin{tikzcd}[column sep=large,row sep=large]
\hat N \Gamma  
 \arrow[d,"\hat N f"'] &  \hat N^2\Gamma 
 \arrow[d,"\hat N^2f"'] \arrow[l,"\mu_{\Gamma}"']& \hat N\Gamma'   \arrow[l,"\hat N l"'] \arrow[d,"\hat N f'"'] & \Gamma''
 \arrow[d,"f''"] \arrow[l,"l'"']\\
\hat N\Sigma  & \hat N^2\Sigma \arrow[l,"\mu_{\Sigma}"] & \hat N \Sigma' \arrow[l,"\hat N \pi"]& \Sigma'' \arrow[l,"\pi'"]
\end{tikzcd}
\end{equation}
We will denote this composition by $(\pi\kleisli \pi',\idy)$.
% where the operation $\pi\kleisli \pi'$ is the Kleisli composition (Section \ref{sec:simp-rel-kl-mor}).
% for more details. 
}
\end{defn} 
{The composition of the squares in Diagram (\ref{diag:composition-T1}) is a valid type I morphism since the composition of the left and the middle square is a pull-back diagram (Lemma \ref{lem:pull-hat-N}). Composing two pull-back squares gives a pull-back square hence the composition of the resulting square with the third square is also a pull-back square.}

\begin{defn}\label{def:bScen}
{\rm
The {\it category $\catbScen$ of bundle scenarios} consists of objects given by bundle scenarios $f:\Gamma\to \Sigma$. A morphism $f\to f'$ is given by  
a pair $(\pi,\alpha)$ of simplicial complex maps making the diagram commute 
\begin{equation}\label{dia:bundle-morphisms}
\begin{tikzcd}[column sep=large,row sep=large]
\hat N\Gamma \arrow[d,"\hat Nf"] &  \pi^*(\hat N\Gamma)  \arrow[l] \arrow[d,"f^\pi"] \arrow[r,"\alpha"] & \Gamma' \arrow[d,"f'"] \\
\hat N\Sigma & \arrow[l,"\pi"] \Sigma' \arrow[r,equal] & \Sigma' 
\end{tikzcd}
\end{equation}
where $\pi^*(\hat N\Gamma)$ is the pull-back along $\pi$. The composition of $(\pi_1,\alpha_1):f\to f'$ and $(\pi_2,\alpha_2):f'\to f''$ is defined as follows:
\begin{equation}\label{eq:composition-bundle}
(\pi_2,\alpha_2)\circ (\pi_1,\alpha_1) = (\pi_1\kleisli \pi_2, \alpha_2\circ \pi_2^*(\hat N \alpha_1))
\end{equation}
where the map 
$$\pi_2^*(\hat N \alpha_1) : \pi_2^*(\hat N \pi_1^*(\hat N\Gamma)) \to \pi_2^*(\hat N \Gamma')$$ 
is the pull-back of $\hat N\alpha_1$ in the diagram
$$
\begin{tikzcd} 
\hat N\pi_1^*(\hat N\Gamma) \arrow[rr,"\hat N\alpha_1"] \arrow[dr,"\hat Nf^{\pi_1}"'] && \hat N\Gamma' \arrow[dl,"\hat N f'"] \\
& \hat N\Sigma' &
\end{tikzcd}
$$
along {the simplicial complex map $\pi_2:\Sigma''\to \hat N\Sigma'$}.
}
\end{defn}
The identity morphism of the object $f:\Gamma \to \Sigma$ is given by 
$$
\begin{tikzcd}[column sep=large,row sep=large]
\hat N\Gamma \arrow[d,"\hat N f"] & \Gamma \arrow[l,"\delta_\Gamma"'] \arrow[d,"f"] \arrow[r,"\idy_{\Gamma}"] & \Gamma \arrow[d,"f"] \\
\hat N\Sigma & \Sigma \arrow[l,"\delta_\Sigma"] \arrow[r,equal] & \Sigma
\end{tikzcd}
$$
This is a valid morphism since the left square {is a pull-back diagram (Lemma \ref{DDDeltsqu}).} 
Definition \ref{def:bScen} gives a well-defined category since the composition rule is associative (Lemma \ref{lem:bScen-is-well-defined}).

\begin{ex}\label{ex:Theexample}
The scenarios $(\triangle,O)$ and $(\square,O)$ in Example \ref{ex:triagle-square} can be realized as bundle scenarios $f_\triangle:\eE_O \triangle \to \triangle$ and $f_\square:\eE_O \square \to \square$. 
{Recall that the simplicial relation $\pi:\square \to \triangle$ is in fact a simplicial complex map.
%the simplicial complex map that collapses the edge $(x_2,x_3)$ to the vertex $x_2$.
In effect the simplicial relation in {Diagram (\ref{dia:bundle-morphisms})} can be replaced by this simplicial complex map when computing the pull-back.}
%\aak{Remember that the relation $\pi:\square \to \triangle$ is a simplicial complex map,} then 
The morphism in this example gives a morphism $(\pi,\alpha): f_\triangle \to f_\square$ of bundle scenarios 
$$
\begin{tikzcd}[column sep=large,row sep=large]
\eE_O \triangle \arrow[d] & \arrow[l] \pi^* \eE_O \triangle \arrow[d] \arrow[r,"\alpha"] &  \eE_O \square \arrow[dl] \\
\triangle & \arrow[l,"\pi"] \square & 
\end{tikzcd}
$$
{This is depicted in Figure \ref{fig:bundle}.}
\end{ex}

{A simplicial complex map $\pi: \Sigma' \to \hat N \Sigma$ induces a functor 
$$\overline{\pi}: \catC_{\Sigma'} \to \catC_{\Sigma}$$ 
defined by $\overline{\pi}(\sigma')=\cup_{
\sigma \in \pi(\sigma')} \sigma$. Then the Kleisli composition can be seen as composition of functors
\begin{equation}\label{eq:pibarComp}
\overline{\pi \kleisli \pi'}=
\overline{\pi} \circ \overline{\pi'}.
\end{equation}}
Given a scenario $(\Sigma,O)$ 
%and a simplicial \aak{ map $\pi: \Sigma' \to \hat N\Sigma$.} 
the category of elements construction (Definition \ref{def:category-of-elements})
for the functor $\mathcal{E}_O \circ \overline{\pi}$ specifies a   simplicial complex:
$$
\catC_{(\eE_O\circ \pi)\Sigma'}= \catEl(\eE_O\circ \overline{\pi}).
$$
Explicitly, the simplicial complex $(\eE_O\circ \pi)\Sigma'$  has
\begin{itemize}
\item vertices $(x',s)$, where $x' \in \Sigma'_0$ and $s \in \mathcal{E}_O(\pi(x'))$, and 
%\aak{(Note that $\pi(x')=\overline{\pi}(x'))$}, and
\item simplices $(\sigma',s)$, where $\sigma'\in \Sigma'$ and $s\in \eE_O(\overline{\pi}(\sigma'))$.
\end{itemize}
%with vertices $(x',s)$, where $x' \in \Sigma'_0$ and $s \in \mathcal{E}_O(\pi(x'))$. Its simplices are given by $(\sigma',s)$, where $\sigma'\in \Sigma'$ and $s\in \eE_O(\pi(\sigma'))$.
{Note that $\pi(x')=\overline{\pi}(\set{x'})$. Projection onto the first coordinate gives a simplicial complex map}
%Equipped with the  projection 
$$f_{\eE_O\circ \pi}:(\eE_O\circ \pi)\Sigma' \to \Sigma'$$
Observe that for $t \in \mathcal{E}_O(\pi(x'))$,  there exists a morphism   $(\set{x'},t)\to (\sigma',s)$ in  $\catC_{(\eE_O\circ \pi)\Sigma'}$
if and only if $x' \in \sigma'$ and $s|_{\pi(x')}=t$. Therefore we can identify $(\sigma',s)$ with the set
$\set{(x',s|_{\pi(x')}):\, x'\in \sigma'}$.
%The projection $f_{\eE\circ \pi}: \eE_O \circ \pi(\Sigma')\to \Sigma'$ is a bundle scenario (proof is analogous to Proposition \ref{pro:canonical-bundle}). 
%We can write: "

{Given a morphism $(\pi,\alpha):(\Sigma',O')\to (\Sigma,O)$ of $\catScen$ we define a morphism of $\catbScen$:}
\begin{equation}\label{diag:pi-r-alpha}
\begin{tikzcd}[column sep=large,row sep=large]
\hat N(\eE_O\Sigma) \arrow[d,"\hat N f_{(\Sigma,O)}"'] & (\eE_O\circ \pi) \Sigma' \arrow[d,"f_{\eE_O\circ \pi}"] \arrow[l,"l"'] \arrow[r,"r(\alpha)"] & \eE_{O'}\Sigma' \arrow[d,"f'"] \\
\hat N \Sigma & \Sigma' \arrow[l,"\pi"] \arrow[r,equal] & \Sigma'
\end{tikzcd}
\end{equation}
where 
\begin{itemize}
\item $l(x',s)= (\pi(x'),s)$, and
\item $r(\alpha)(\sigma',s)=(\sigma',\alpha_{\sigma'}(s))$.
\end{itemize}
{The left square is a pull-back diagram  (Lemma \ref{lem:pullback}) hence it is a valid type I morphism and 
together  with Propositions \ref{pro:canonical-bundle}, \ref{pro:pull-back-bundle}, and \ref{pro:N-bundle-sce} this implies that  $f_{\eE_O\circ \pi}$ is a bundle scenario.
Observe also that naturality of $\alpha$ implies that the map $r(\alpha)$ is a morphism of $\catsComp$.}

%Given two scenarios $(\Sigma',O')$ and $(\Sigma,O)$ together with a simplicial complex map   $\pi:\Sigma'\to \hat N \Sigma$ and a natural transformation $\alpha: \mathcal{E}_O \circ \overline{\pi}\to \mathcal{E}_{O'}$, we define a type II morphism
%\begin{equation}\label{eq:r}
%\begin{tikzcd} 
%(\eE_O\circ \pi)\Sigma'  
% \arrow[rd,"f_{\eE_O\circ \pi}"'] \arrow[rr,"r(\alpha)"]&& \eE_{O'}\Sigma'  
% \arrow[dl,"f_{(\Sigma',O')}"] \\
%& \Sigma'  & 
%\end{tikzcd}
%\end{equation}
%by $r(\alpha)(\sigma',s)=(\sigma',\alpha_{\sigma'}(s))$. 
%By the naturality of $\alpha$ the map $r(\alpha)$ is a morphism of $\catsComp$.

\begin{pro}\label{pro:fully-faithful-bundle}
The assignments $(\Sigma,O) \mapsto f_{(\Sigma,O)}$  and $(\pi,\alpha)\mapsto (\pi,r(\alpha))$, {where $(\pi,r(\alpha))$ is given as in Diagram (\ref{diag:pi-r-alpha}),}  specify a fully faithful functor  
$
\eE:\catScen \to \catbScen.
$ 
\end{pro}  
\Proof{ 
The simplicial complex map $f_{(\Sigma,O)}$ is a bundle scenario by Proposition \ref{pro:canonical-bundle}. By definition it is clear that $\eE(
\Id_{(\Sigma,O)}) = \Id_{\eE(\Sigma,O)}$.
Given the morphisms
$(\pi,\alpha): (\Sigma,O) \to (\Sigma',O')$ and 
$(\pi',\alpha'): (\Sigma',O') \to (\Sigma'',O'')$ of $\catScen$ we have
$$
\begin{aligned}
\eE((\pi',\alpha')\circ(\pi,\alpha))&=
\eE(\pi \diamond \pi', \alpha' \circ (\alpha \ast \Id_{\overline{\pi'}})) \\
&=
(\pi \diamond \pi', r(\alpha' \circ (\alpha \ast \Id_{\overline{\pi'}})))
\\ &=(\pi \diamond \pi',r(\alpha') \circ (\pi')^\ast
(\hat N r(\alpha))) \\
&= (\pi',r(\alpha'))\circ 
(\pi,r(\alpha))\\
&= \eE(\pi',\alpha')\circ \eE(\pi,\alpha).
\end{aligned}
$$
{where in the third line we used Lemma \ref{lem:ralpr}.}
This proves that $\eE$ is a functor. Finally, Lemma \ref{lem:assemble-alpha} and \ref{lem:assemble-typeII} implies that this functor is fully faithful.
}

\subsection{Empirical models on bundle scenarios} 

{
A simplicial complex map that is discrete over vertices satisfies a very useful property which we will refer to through out the paper:
} 
 
\begin{rem}\label{Improppp} 
Let $f: \Gamma \to \Sigma$ be a surjective simplicial complex map that is discrete over vertices.
For simplices $\sigma'\subset \sigma \in \Sigma$ and $\gamma\in \Gamma$ with $f(\gamma)=\sigma$, there exists a unique simplex $\gamma'\in \Gamma$ such that $f(\gamma')=\sigma'$ since $f$ is discrete over vertices.
\end{rem}

We will write
$$
r_{\sigma,\sigma'} : f^{-1}(\sigma) \to f^{-1}(\sigma')
$$
for the map sending $\gamma$ to the corresponding unique simplex $\gamma'$ {specified by Remark \ref{Improppp}.}
For a distribution $p\in D_R(f^{-1}(\sigma))$ we define the restriction $p|_{\sigma'}=D_R(r_{\sigma,\sigma'})(p)$.
With this definition we have
\begin{equation}\label{eq:Improppp}
p|_{\sigma'}(\gamma')=
\sum_{\gamma' \subset \gamma \in f^{-1}(\sigma)}p(\gamma).
\end{equation}

%\comm{@Aziz: I split Equation \ref{eq:Improppp} from Remark \ref{Improppp}. Please check the references are correct in the text.
%}
 
\begin{defn}\label{def:emprical-bundle}
{\rm
An {\it empirical model on a bundle scenario} $f: \Gamma \to \Sigma$ is a family $p=(p_{\sigma})_{\sigma \in \Sigma}$ of distributions, where $p_{\sigma}\in D_R(f^{-1}(\sigma))$, such that $p_{\sigma}|_{\sigma'}=p_{\sigma'}$ for every $\sigma' \subset \sigma$ in $ \Sigma$.
We will denote the set of empirical models on $f$ by $\bEmp(f)$.
}
\end{defn}

\subsubsection{Push-forward empirical models for bundles}

\Def{\label{def:hatNp}
Let $f:\Gamma \to \Sigma$ be a bundle scenario and $p \in \bEmp(f)$ be an empirical model.
We define an empirical model $\hat Np$ on the bundle scenario $\hat N f:\hat N\Gamma \to \hat N \Sigma$ by the formula
\begin{equation}\label{eq:hatNp}
(\hat N p)_{\{\sigma_1,\cdots,\sigma
_n\}}(\{\gamma_1,\cdots,\gamma_n\})=p_{\cup_{i}\sigma_i}(\cup_{i}\gamma_i)
\end{equation}
where
$\{\sigma_1,\cdots,\sigma
_n\}\in \hat N \Sigma$, and $\{\gamma_1,\cdots,\gamma
_n\}\in (\hat N f)^{-1}(\{\sigma_1,\cdots,\sigma
_n\}) $.
%, we define an empirical model $\hat Np$ on the bundle scenario $\hat N f : \hat N \Gamma \to \hat N \Sigma$ as follows: 
%$$
%(\hat N p)_{\{\sigma_1,\cdots,\sigma
%_n\}}(\{\gamma_1,\cdots,\gamma_n\})=p_{\cup_{i}\sigma_i}(\cup_{i}\gamma_i).
%$$
}

Equation (\ref{eq:hatNp}) gives a well-defined empirical model (Lemma \ref{lem:hat-N-p}).

\begin{defn}\label{def:push-along-T1}
{\rm
For $p \in \bEmp(f)$ we define the push-forward of $p$ along a type I morphism
$$
\begin{tikzcd}[column sep=huge,row sep=large]
\hat N  \Gamma \arrow[d,"\hat N f"] & \pi^* (\hat N \Gamma) \arrow[d," f^\pi"] \arrow[l,"l"'] \\
\hat N \Sigma & \Sigma' \arrow[l,"\pi"]
\end{tikzcd} 
$$
by the formula
$$
(\pi_{\ast}p)_{\sigma'}(\gamma')= (\hat N p)_{\pi(\sigma')}(l(\gamma'))
$$
where $\sigma' \in \Sigma'$ and $\gamma' \in (f^{\pi})^{-1}(\sigma')$. 
When we regard simplicial  relations as functors between the corresponding categories, we {can} write $(\pi_{\ast}p)_{\sigma'}(\gamma')= p_{\overline{\pi}(\sigma')}(\overline{l}(\gamma'))$.
%
%\ak{[[I replaced $\pi^\ast p$ by $\pi_\ast p$, otherwise it can be confused with $\pi^\ast(\hat N\alpha)$ (see \ref{ReplaabEmp})]].}
}
\end{defn}

The push-forward $\pi_*p$ is an empirical model on the bundle scenario $f^{\pi}$ {as we verify in Lemma \ref{lem:push-typeI}.}

\begin{defn}\label{def:push-along-T2}
{\rm 
For $p \in \bEmp(f)$ and 
$\sigma \in \Sigma$ we define the push-forward of $p$ along a type II morphism
\begin{equation}\label{AAlpDiag}
\begin{tikzcd}
\Gamma  
 \arrow[rd,"f"'] \arrow[rr,"\alpha"]&& \Gamma'
 \arrow[dl,"g"] \\
& \Sigma  & 
\end{tikzcd}
\end{equation}
by the formula
$$
(\alpha_{\ast}p)_{\sigma}=D_R(\alpha|_{f^{-1}(\sigma)})(p_\sigma).
$$
}
\end{defn}

{In Lemma \ref{lem:push-typeII} we verify that}
%In this case 
$\alpha_*p$ is an empirical model on the bundle scenario $g$.
% (Lemma \ref{lem:push-typeII}). 
Now we are ready to define the push-forward along {an arbitrary} morphism.

\begin{defn}\label{def:push-forward-bundle-mor}
For $p \in \bEmp(f)$ we define the {\it push-forward} of  $p$ along a morphism  $(\pi,\alpha):f\to f'$ to be $(\pi,\alpha)_{\ast}p=\alpha_{\ast}(\pi_{\ast}p)$.
\end{defn}

Next, we show that with this definition assigning the set of empirical models to a bundle scenario gives a functor.

\Pro{\label{cor:push-forward-empirical}
The assignments $ f \mapsto \bEmp(f)$ and $(\pi,\alpha) \mapsto (\pi,\alpha)_*$ define a functor $\bEmp:\catbScen \to \catSet$.
}

\Proof{
%\ak{[[I added a proof]]} 
Given $f:\Gamma \to \Sigma$ and $p \in \bEmp(f)$, it is clear that  
$(\delta_{\Sigma},\Id_{\Gamma})_\ast p=p$. 
Using Lemma \ref{lem:push-typeI}, \ref{lem:push-typeII}, and \ref{ReplaabEmp}
for the morphisms
$(\pi,\alpha): f \to f'$ and 
$(\pi',\alpha'): f' \to f''$ of $\catbScen$  we obtain
$$
\begin{aligned}
(\pi',\alpha')_\ast\left((\pi,\alpha)_{\ast}p\right)
&=\alpha'_{\ast}\left(\pi'_\ast
(\alpha_\ast(\pi_\ast p))\right)\\
&=\alpha'_{\ast}(\left((\pi')^\ast(\hat N \alpha)\right)_\ast
\left(\pi'_\ast(\pi_\ast p)\right))
\\
&=
(\alpha'_{\ast}\circ (\pi')^\ast(\hat N \alpha))_\ast
\left((\pi \kleisli \pi')_\ast p\right)\\
&=(\pi \kleisli \pi',\alpha'_{\ast}\circ (\pi')^\ast(\hat N \alpha))_\ast p\\
&=((\pi',\alpha')\circ 
(\pi,\alpha))_\ast p.
\end{aligned}
$$
}
\Pro{\label{pro:natural-iso-bundle}
For an object $(\Sigma,O)$ of $\catScen$ and an empirical model $e \in \Emp(\Sigma,O)$, defining $$\eta_{(\Sigma,O)}(e)_{\sigma}(\sigma,s)=e_{\sigma}(s),$$
where $\sigma \in \Sigma$ and $s \in \mathcal{E}_O(\sigma)$, gives a natural isomorphism
$\eta:\Emp \to \bEmp \circ \eE$.
}
\begin{proof}
Observe that  $(f_{(\Sigma,O)})^{-1}(\sigma)=\{\sigma\}\times \mathcal{E}_O(\sigma)$. That is, one can consider $e$ as a family $\{e_{\sigma} \in D_R(f_{(\Sigma,O)}^{-1}(\sigma))\}_{\sigma \in \Sigma}$ of distributions, and the compatibility in both cases is the same. 
Now, we prove the naturality. Given $(\pi,\alpha):(\Sigma,O)\to (
\Sigma',O')$ a morphism of $\catScen$, we need to prove that the following diagram commutes
\begin{equation}\label{piEe}
\begin{tikzcd}[column sep=huge,row sep=large]
\Emp(\Sigma,O) \arrow[r,"(\pi{,}\alpha)_\ast"] \arrow[d,"\eta_{(\Sigma,O)}"'] & \Emp(\Sigma',O') 
\arrow[d,"\eta_{(\Sigma',O')}"] \\
 \bEmp(f_{(\Sigma,O)}) \arrow[r]  
 \arrow[r,"\eE(\pi{,}\alpha)_\ast"]& \bEmp(f_{(\Sigma',O')})
\end{tikzcd}
\end{equation}
Given $e \in \Emp(\Sigma,O)$ and $(\sigma',s') \in \mathcal{E}_{O'}\Sigma'$, we have 
$$
\begin{aligned}
\left(\eE(\pi,\alpha)_{\ast}(\eta_{(\Sigma,O)}(e))\right)_{\sigma'}(\sigma',s')
&=\left((\pi,r(\alpha))_{\ast}(\eta_{(\Sigma,O)}(e))\right)_{\sigma'}(\sigma',s')\\
&=
r(\alpha)_{\ast}(\pi_\ast(\eta_{(\Sigma,O)}(e)))_{\sigma'}(\sigma',s')\\
&=D_R(r(\alpha)|_{(f_{\eE_O\circ \pi})
^{-1}(\sigma')})(\pi_\ast(\eta_{(\Sigma,O)}(e))_{\sigma'})(\sigma',s')\\
&= \sum_{r(\alpha)(\sigma',s)=(\sigma',s')} \left(\pi_{\ast}(\eta_{(\Sigma,O)}(e))\right)_{\sigma'}(\sigma',s)
\\
&= \sum_{\alpha_{\sigma'}(s)=s'} (\hat N \eta_{(\Sigma,O)}(e))_{\pi(\sigma')}(\pi(\sigma'),s)
\\
&= \sum_{\alpha_{\sigma'}(s)=s'} (\eta_{(\Sigma,O)}(e))_{\overline{\pi}(\sigma')}(\overline{\pi}(\sigma'),s)\\
&=
\sum_{\alpha_{\sigma'}(s)=s'}e_{\overline{\pi}(\sigma')}(s).
\end{aligned}
$$
%
%\aak{We used the notation $\overline{\pi}$ for the corresponding functor $\overline{\pi}:\catC_{\Sigma'}\to \catC_\Sigma$.}\ak{[[I think we need to do it from the beginning.]]} 
On the other hand,
$$
\begin{aligned}
\left(\eta_{(\Sigma',O')}((\pi,\alpha)_\ast e)\right)_{\sigma'}(\sigma',s') &=((\pi,\alpha)_{\ast}e)_{\sigma'}(s')\\
&=D_R(\alpha_{\sigma'})(e_{\overline{\pi}(\sigma')})(s')\\
&=
\sum_{\alpha_{\sigma'}(s)=s'}e_{\overline{\pi}(\sigma')}(s).
\end{aligned}
$$
\end{proof}

Combining Proposition \ref{pro:fully-faithful-bundle} and \ref{pro:natural-iso-bundle} we obtain our main result in this section.

\Thm{\label{thm:main-bundle}
Sending a scenario $(\Sigma,O)$ to the bundle scenario $f_{(\Sigma,O)}:\eE_O\Sigma \to \Sigma$ specifies a fully faithful functor $\eE:\catScen \to \catbScen$. Moreover, there is a natural isomorphism $\eta:\Emp \to \bEmp\circ \eE$.  
}

%\section{New}
%Stuff moved to the end of note

%%

\section{The category of simplicial bundle scenarios} \label{sec:category-simplicial-scenarios}

{The theory of} simplicial distributions is  introduced in \cite{okay2022simplicial} as a generalization of empirical models to scenarios consisting of a space of measurements and outcomes.  
In this section we extend simplicial distributions to simplicial bundle scenarios. 
First we introduce simplicial bundle scenarios and morphisms between them to obtain the category $\catsScen$ of simplicial bundle scenarios. Simplicial distributions are defined in this generality as a functor $\sDist:\catsScen \to \catSet$. Therefore in addition to extending the notion to bundles we also describe how to push-forward a simplicial distribution along a morphism.
Theorem \ref{thm:main-simplicial} is our main result that prove the nerve functor $N$ in Diagram (\ref{dia:-eta-zeta}) is fully faithful and $\zeta$ is a natural isomorphism.
In Section \ref{sec:simplicial-sets} we present the technical results concerning simplicial sets.

\begin{defn}\label{def:simplicial-DV-LS-BS}
{\rm
A map $f:E\to X$ of simplicial sets is called 
\begin{itemize}
\item surjective if it is surjective in each degree $n\geq 0$,
\item locally surjective if it has the right-lifting property with respect to $d^i:\Delta[n-1]\to \Delta[n]$ for $n\geq 1$, and $0\leq i \leq n$, i.e., {the diagonal map exists in the following commutative diagram}
$$
\begin{tikzcd}[column sep=huge,row sep=large]
\Delta[n-1] \arrow[d,"d^i"] \arrow[r] & E\arrow[d,"f"] \\
\Delta[n] \arrow[ru,dashed] \arrow[r] & X
\end{tikzcd}
$$

\item discrete over vertices if it has the right-lifting property with respect to $s^i:\Delta[n]\to \Delta[n-1]$ for $n\geq 1$, and $0\leq i \leq n-1$, {i.e., the diagonal map exists} 
$$
\begin{tikzcd}[column sep=huge,row sep=large]
\Delta[n] \arrow[d,"s^j"] \arrow[r] & E\arrow[d,"f"] \\
\Delta[n-1] \arrow[ru,dashed] \arrow[r] & X
\end{tikzcd}
$$
\end{itemize}
 A simplicial (bundle) scenario is a map $f:E\to X$ of simplicial sets that is surjective, locally surjective and discrete over vertices. 
}
\end{defn}

%Note that $f$ is a bundle scenario if and only if it has the right-lifting property with respect to all the morphisms  $\theta:\Delta[m]\to \Delta[n]$ where $\theta$ is induced by the morphisms in $\catDelta_+$.

{ 
Let $N:s\catComp\to \catsSet$  denote the nerve functor that sends a simplicial complex $\Sigma$ to the simplicial set $N\Sigma$ whose $n$-simplices are given by
$$
(N\Sigma)_n = \set{(\sigma_1,\cdots,\sigma_n)\in \Sigma^n:\, \cup_{i=1}^n \sigma_i \in \Sigma )}.
$$
For the simplicial structure maps see Definition \ref{def:N}. 
The nerve construction can be used to obtain a simplicial scenario from a bundle scenario of simplicial complexes.
%To prove this result we first show that local surjectivity is preserved by the nerve functor.
}

%
%

%{
%The second property of a simplicial scenario, namely being discrete over vertices, holds for the nerve of any simplicial complex map.
%}

%{The first step in constructing a functor from the category of bundle scenarios {to the category of simplicial scenarios} is the following result. 
%Once we define morphisms between simplicial scenarios, we will also show that the nerve functor respects composition of morphisms.}

\Pro{\label{pro:nerve-pres-bundle-scen}
If $f$ is a bundle scenario then $Nf$ is a simplicial scenario.}
\Proof{%\ak{[[I added a proof]]} 
This follows from Lemma \ref{lem:nerve-S-LS} and \ref{lem:nerve-pre-dv}.}

\subsection{Morphisms of simplicial scenarios}

Defining morphisms between simplicial scenarios is easier than defining morphisms between bundle scenarios. In Section \ref{sec:morphisms-bundle-scen}, we introduced morphisms between bundle scenarios in two steps by defining type I and II morphisms. For simplicial scenarios, we can define morphisms in one step. Therefore this approach, while generalizing bundles of simplicial complexes, has a cleaner {description of morphisms.}

\begin{defn}\label{def:sScen}
{\rm
The category
 $\catsScen$ of simplicial (bundle) scenarios consists of  objects given by simplicial (bundle) scenarios and morphisms $f\to f'$  given by pairs $(\pi,\alpha)$ of simplicial set maps making the following diagram commute
\begin{equation}\label{dia:morphisms}
\begin{tikzcd}[column sep=huge,row sep=large]
E \arrow[d,"f"] &  \pi^*(E)  \arrow[l] \arrow[d,"f^\pi"] \arrow[r,"\alpha"] & E' \arrow[d,"f'"] \\
X & \arrow[l,"\pi"] X' \arrow[r,equal] & X' 
\end{tikzcd}
\end{equation}
where $\pi^*(E)$ is the pull-back along $\pi$.
%\comm{Describe composition by an equation.}
}
\end{defn}
%\Pro{
%The composition given by   equation ??? is associative.
%}

%Although simplicial scenarios in a more restricted form are introduced in \cite{okay2022simplicial} morphisms between them in this generalized sense were not defined. 
%Therefore we usually omit the word ``bundle" when talking about simplicial scenarios. 

\begin{rem}
The map $f^\pi$ in Diagram (\ref{dia:morphisms}) is also a simplicial scenario since pulling back a map preserves {surjectivity, local surjectivity, and being discrete over vertices.
The latter two properties are described in terms of the liftings of certain diagrams, and liftings still exist under pull-backs.
}
% having the right-lifting property with respect to the same collection of maps.
% having a right-lifting property is stable under pullbacks. \comm{explain more}
%\ak{(I found in nlab, we need a reference)}.  
\end{rem} 

The composition of $(\pi_1,\alpha_1):f\to f'$ and $(\pi_2,\alpha_2):f'\to f''$ is defined by 
\begin{equation}\label{eq:composition-sScen}
(\pi_2,\alpha_2)\circ (\pi_1,\alpha_1) = (\pi_1\circ \pi_2, \alpha_2\circ \pi_2^*( \alpha_1))
\end{equation}
where 
$$\pi_2^*(\alpha_1) : \pi_2^*(\pi_1^*(E)) \to \pi_2^*(E')$$
 is the pull-back of $\alpha_1$ along $\pi_2$.
This gives a well-defined category: The identity morphism of an object $f:E \to X$ is given by $(\Id_X,\Id_E)$ and the composition rule is associative as we prove next.
 
\Lem{\label{lem:sScen-is-well-defined}
The composition given by Equation (\ref{eq:composition-sScen}) is associative. 
} 
\Proof{For $(\pi_1,\alpha_1):f \to f'$, $(\pi_2,\alpha_2):f' \to f''$, and $(\pi_3,\alpha_3):f'' \to f'''$, we have
$$
\begin{aligned}
(\pi_3,\alpha_3)\circ \left((\pi_2,\alpha_2)\circ (\pi_1,\alpha_1)\right)&=
(\pi_3,\alpha_3)\circ (\pi_1 \circ \pi_2,\alpha_2 \circ \pi_2^{\ast}( \alpha_1))\\
&=\left((\pi_1 \circ \pi_2)\circ \pi_3 ,\alpha_3 \circ {\pi_3}^{\ast}\left(\alpha_2 \circ \pi_2^{\ast}( \alpha_1)\right)\right)
\\
&=\left(\pi_1 \circ (\pi_2\circ \pi_3) ,\alpha_3 \circ {\pi_3}^{\ast}(\alpha_2) \circ {\pi_3}^{\ast}( \pi_2^{\ast}( \alpha_1))\right)\\
&=\left( \pi_1 \circ (\pi_2\circ \pi_3),\alpha_3 \circ {\pi_3}^{\ast}(\alpha_2) \circ (\pi_2 \circ \pi_3)^{\ast}(\alpha_1)\right)\\
&=(\pi_2 \circ \pi_3,\alpha_3 \circ \pi_3^{\ast}(\alpha_2))\circ (\pi_1,\alpha_1))\\
&=
\left((\pi_3,\alpha_3)\circ (\pi_2,\alpha_2)\right)\circ (\pi_1,\alpha_1).
\end{aligned}
$$
}

%We will consider the associated simplicial scenario given by 
%$$
%N\pr_1: N\Sigma_O \to N\Sigma.
%$$

%\Pro{\label{pro:realizing-scenario-maps} \comm{will be revised}
%There is a bijection between the morphism sets
%(N\Sigma_O, N\Sigma),(N\Sigma'_{O'}, N\Sigma')
%$$
%\catsScen(N\pr_1,N\pr'_1) \cong \catScen((\Sigma,O),(\Sigma',O')).
%$$
%}
%\Proof{
%Let $(\pi,\alpha)$ be a morphism of simplicial scenarios $N\pr_1\to N\pr_1'$.
%We know that $\pi:N\Sigma'\to N\Sigma$ is given by a simplicial relation (Proposition \ref{pro:simp-rel-pCMon} and Lemma \ref{lem:mor-sSet}).
%Now, using $\pi^*(N\Sigma_O)\cong N(\pi^* \Sigma_O)$ (Lemma \ref{lem:pullback-nerve}) and by Lemma \ref{lem:pmon-morphisms-nat-tr} simplicial set maps $\alpha:\pi^*(N\Sigma_O) \to \Sigma'_{O'}$ are precisely given by natural transformations $\eE\circ \pi\to \eE'$. 
%}

%

%

Next we turn to the nerve functor $N:\catsComp \to \catsSet$.
%; see Definition \ref{def:N}.
%First we need a technical definition that will be used in analyzing the result of application of the nerve functor to morphisms of bundle scenarios.

\begin{defn}\label{def:T-pi}%\ak{[[Moved back]]}
Let $\pi:\Sigma'\to \hat N\Sigma$ be a simplicial complex map and
$\overline{\pi}$ denote the functor $\catC_{\Sigma'}\to \catC_{\Sigma}$ associated to $\pi$.
We define 
%$$
%T(\pi): N\Sigma' \xrightarrow{N\pi} N\hat N\Sigma \xrightarrow{\tilde\mu_\Sigma} N\Sigma 
%$$
$$
T(\pi): N\Sigma'\to N\Sigma
$$
%where $\tilde{\mu}_{\Sigma}$ is as in Definition \ref{def:tilde-mu}.
by
$$
T(\pi)_n(\sigma_1,\cdots,\sigma_n) =
(\overline{\pi}(\sigma_1),\cdots,\overline{\pi}(\sigma_n)) 
$$
where $(\sigma_1,\cdots,\sigma_n)\in (N\Sigma')_n$.
%, where 
%To describe this map remember that 
%$\overline{\pi}$ is the functor $\catC_{\Sigma'}\to \catC_{\Sigma}$ associated to $\pi$.
%\aak{(see Equation (\ref{eq:pibar})).} Then
%$$
%T(\pi)_n(\sigma_1,\cdots,\sigma_n) =
%(\overline{\pi}(\sigma_1),\cdots,\overline{\pi}(\sigma_n)) 
%$$
%for every $(\sigma_1,\cdots,\sigma_n)\in (N\Sigma')_n$.
\end{defn}

%\Def{\label{def:T-pi} 
%For a simplicial complex map $\pi:\Sigma' \to \hat N \Sigma$ we define 
%$$
%T(\pi): N\Sigma' \xrightarrow{N\pi} N\hat N\Sigma \xrightarrow{\tilde\mu_\sigma} N\Sigma 
%$$
%=\tilde{\mu}_{\Sigma}\circ N(\pi)$ 
%where $\tilde \mu_\Sigma$ is as in Definition \ref{def:tilde-mu}.
%}

%Observe that $T(\pi)_n(\sigma_1,\cdots,\sigma_n)=(\overline{\pi}(\sigma_1),\cdots,\overline{\pi}(\sigma_n))$ for every $(\sigma_1,\cdots,\sigma_n) \in (N\Sigma')_n$. where $\bar \pi$ is the associated functor $\catC_{\Sigma'}\to \catC_{\hat N\Sigma}$.
%{
%We will apply $T$ to the unit $\delta_\Sigma:\Sigma \to \hat N\Sigma$ of the $\hat N$ monad and the Kleisli composition $\pi\kleisli \pi'$ of two morphisms where $\pi$ is as above and $\pi': \Sigma'' \to \hat N\Sigma'$ is another simplicial complex map.
%}

%

Given a morphism $(\pi,\alpha):f\to f'$ between two bundle scenarios $f:\Gamma\to \Sigma$ and $f':\Gamma'\to \Sigma'$ we define a morphism of $\catsScen$ by the diagram:
\begin{equation}\label{eq:morphism-sScen}
\begin{tikzcd}[column sep=huge,row sep=large]
N\Gamma \arrow[d,"N f"] & N(\pi^*(\hat N \Gamma)) \arrow[l] \arrow[d,"Nf^\pi"] \arrow[r,"N\alpha"] & N\Gamma' \arrow[d,"Nf'"] \\
N\Sigma & N\Sigma' \arrow[l,"T(\pi)"] \arrow[r,equal] &  N\Sigma' 
\end{tikzcd}
\end{equation} 
This is a valid morphism of $\catsScen$ since the left square is a pull-back diagram (Lemma \ref{lem:tildemupull}), i.e., $N(\pi^*(\hat N \Gamma))\cong T(\pi)^*(N{\Gamma})$.

%Finally, we are in a position to prove our main result in this section.

\begin{pro}\label{pro:fully-faithful-simplicial}
The assignments $f \mapsto Nf$  and $(\pi,\alpha) \mapsto (T(\pi),N\alpha)$ specify a fully faithful functor  
$
N :\catbScen \to \catsScen.
$ 
\end{pro}  
\Proof{For a bundle scenario $f: \Gamma \to \Sigma$ the simplicial map $Nf$ is a simplicial scenario by Proposition \ref{pro:nerve-pres-bundle-scen}. In addition, by Lemma \ref{lem:TFun} we have that  
$N(\delta_{\Sigma},\Id_{\Gamma})=
(T(\delta_{\Sigma}),
N(\Id_{\Gamma}))=
(\Id_{N\Sigma},\Id_{N\Gamma})$.
Given two morphisms
$(\pi,\alpha): f \to f'$ and 
$(\pi',\alpha'): f' \to f''$ of $\catsScen$, by Lemmas \ref{lem:TFun} and \ref{lem: ReplaabNN}, we have 
$$
\begin{aligned}
N((\pi',\alpha')\circ(\pi,\alpha))&=
N(\pi \diamond \pi', \alpha' \circ (\pi')^{\ast}(\hat N\alpha)) \\
&=
(T(\pi \diamond \pi'), N\left(\alpha' \circ (\pi')^{\ast}(\hat N\alpha)\right))
\\ &=(T(\pi) \circ T(\pi'), N(\alpha') \circ N\left((\pi')^{\ast}(\hat N\alpha)\right))\\
&=
\left(T(\pi) \circ T(\pi'), N(\alpha') \circ T(\pi')^\ast(N\alpha)\right)\\
&=(T(\pi'),N\alpha')\circ (T(\pi),N\alpha)\\
&=N(\pi',\alpha')\circ 
N(\pi,\alpha). 
\end{aligned}
$$
This proves that $N$ is a functor. By Lemma \ref{lem:pi-Npi} and \ref{lem:Nalph-alpha} this functor is fully faithful.
%\comm{For the last part one needs
%$$
%(T\pi)^*(N\Gamma) \cong N(\pi^*\hat N\Gamma).
%$$} 
}
%\Pro{
%The assignment sending a bundle scenario $f:\Gamma\to \Sigma$ to the simplicial scenario $Nf:N\Gamma\to N\Sigma$ defines a functor $N:\catbScen \to \catsScen$. \comm{how is this functor defined on morphisms. there exists a natural isomorphism $\bEmp \to \sDist\circ N$.}

\subsection{Simplicial distributions on bundles}\label{sec:simp-dist-revisited}

%{Simplicial distributions are first introduced in \cite{okay2022simplicial}. In this section we generalize this definition to simplicial bundle scenarios introduced in this paper.
%In fact, the definition works for an arbitrary simplicial set map.
%}
 
\begin{defn}\label{def:simp-dist-revisited}
{\rm
Let $f:E\to X$ be a simplicial set map.
A {\it simplicial distribution} on $f$
%a simplicial (bundle) scenario $f:E\to X$ 
is a simplicial set map $p:X\to D_R(E)$ that makes the following diagram commute:
\begin{equation}\label{dia:simp-dist-revisited}
\begin{tikzcd}[column sep=huge,row sep=large]
& D_R(E) \arrow[d,"{D_R(f)}"] \\
X \arrow[ur,"p"] \arrow[r,"\delta"'] & D_R(X)
\end{tikzcd}
\end{equation}
%A section $s:X\to E$ of the simplicial scenario $f:E\to X$ gives a simplicial distribution: $\delta^s:X\xrightarrow{s} E\xrightarrow{\delta} D_R(E)$. These distributions are called deterministic. 
We write $\sDist(f)$ 
%and $\dDist(f)$ 
for the set of simplicial 
%and deterministic 
distributions on $f$.
%; respectively.
}
\end{defn}
 
 \Pro{\label{pro:sDist-characterization}
A map $p:X\to D_R(E)$ belongs to $\sDist(f)$ if and only if
the support of $p_n(x)$  is contained in $f_n^{-1}(x)$ for all $x\in X_n$.
}
\Proof{
The map $p$ belongs to 
$\sDist(f)$ if and only if for every $x\in X_n$ we have
$$
\sum_{e \in E_n}p_n(x)(e)\delta^{f_n(e)}=\delta^x.
$$
Since $R$ is  a zero-sum-free semiring, this condition holds if and only if $p_n(x)(e)=0$ for every  $x \in X_n$ and $e\in E_n$ satisfying $f_n(e)\neq x$.
}
 
Given  simplicial sets $X$ and $Y$ we define
$$
m: D_R(X)\times D_R(Y) \to D_R(X\times Y)
$$
by sending $(p,q)\in D_R(X_n)\times D_R(Y_n)$ to the distribution $p\cdot q$ defined by
\begin{equation}\label{eq:p-dot-q}
p\cdot q(\sigma,\theta) = p(\sigma) q(\theta)
\end{equation}
for $\sigma\in X_n$ and $\theta\in Y_n$.
%We will need the following simplicial set map:  
%We show that the map $m$ is a simplicial map, Given $(p,q)\in D_R(X_n)\times D_R(Y_n)$ and $(\sigma,\theta) \in X_n\times Y_n$, we have 
%
It is straightforward to verify that Equation (\ref{eq:p-dot-q}) respects the face and the degeneracy maps. {For instance, for the face maps we have}
$$
\begin{aligned}
D_R(d_i^X\times d_i^Y)(m_{{n+1}}(p,q))(\sigma,\theta)&=
\sum_{d^X_i(\sigma')=\sigma, \,
d^Y_i(\theta')=\theta} m_{{n+1}}(p,q)(\sigma',\theta')\\
&=
\sum_{d^X_i(\sigma')=\sigma , \,
d^Y_i(\theta')=\theta}p(\sigma')q(\theta') \\
&=\sum_{d^X_i(\sigma')=\sigma}p(\sigma') \sum_{d^Y_i(\theta')=\theta}q(\theta') \\
&=D_R(d_i^X)(p)(\sigma) \cdot D_R(d_i^Y)(q)(\theta) \\
&=m_{{n}}(D_R(d_i^X)(p),D_R(d_i^Y)(q))(\sigma,\theta).
\end{aligned}
$$ 
The canonical map that goes in the other direction
$$
D_R(\pr_1)\times D_R(\pr_2):D_R(X\times Y) \to D_R(X)\times D_R(Y)
$$
splits $m$, {i.e.,} $(D_R(\pr_1)\times D_R(\pr_2) )\circ m=\idy$.

\Rem{\label{rem:f-XY} 
Given a pair $(X,Y)$ of simplicial sets consider the projection map 
$$f_{(X,Y)}:X\times Y\to X$$
Simplicial distributions on $f_{(X,Y)}$ coincide with simplicial distributions on $(X,Y)$ in the sense of \cite{okay2022simplicial}, that is, with simplicial set maps $X\to D_R(Y)$. 
To see this correspondence,
let $p$ be a simplicial distribution in the sense of {Definition \ref{def:simp-dist-revisited}.}
%Diagram (\ref{dia:simp-dist-revisited}). 
For every $\sigma\in X_n$ we have that
%\begin{equation}\label{eq:equiv-simp-dist}
%\sum_{\theta\in Y_n} p_\sigma(-,\theta) = \delta^\sigma.
%\end{equation}
\begin{equation}\label{eq:equiv-simp-dist}
{
\sum_{\theta\in Y_n} p_n(\sigma)(-,\theta) = \delta^\sigma.
}
\end{equation}
This implies that $\supp({p_n(\sigma)})$ consists of pairs $(\sigma,\theta)$ where $\theta\in \supp({p_n(\sigma)(\sigma,-)})$. Therefore we can identify  ${p_n(\sigma)}$ with the distribution ${p_n(\sigma)}(\sigma,-)\in D_R(Y_n)$.  Given $p$, the desired simplicial set map is the composite 
$$X\xrightarrow{p} D_R(X\times Y) \xrightarrow{D_R(\pr_2)}D_R(Y)$$ 
Conversely, a simplicial set map $q:X\to D_R(Y)$ can be lifted to $D_R(X\times Y)$ by the following composite:
$$
X\xrightarrow{\idy\times q} X\times  D_R(Y) \xrightarrow{\delta \times \idy} D_R(X)\times D_R(Y) \xrightarrow{m} D_R(X\times Y),
$$ 
%where $m$ is given in Definition \ref{def:m}.
%This morphism makes Diagram (\ref{dia:simp-dist-revisited}) commute. 
{which belongs to $\sDist(f_{(X,Y)})$ by Proposition \ref{pro:sDist-characterization}.}
Note that $f_{(X,Y)}$ fails to be a simplicial scenario (Definition \ref{def:simplicial-DV-LS-BS}): In general, it is surjective and locally surjective, but not discrete over vertices.
%\comm{Therefore it might be good to extend the definition of the objects in $\catsScen$ so that projection maps $f_{(X,Y)}$ also belong to this category. In this paper we focus on the relationship to bundle scenarios of simplicial complexes and use a more restricted definition of a simplicial scenario.}
}

\subsubsection{Push-forward simplicial distributions}

{
In this section we   define the push-forward of a simplicial distribution along a morphism {of $\catsScen$}. As a first step we define push-forward along $\pi:X'\to X$ by introducing  a distribution on  the middle scenario $f^\pi$ in Diagram (\ref{dia:morphisms}). 
}

\begin{defn}\label{def:push-forward-pi}
{\rm 
Given a simplicial distribution $p$ on $f:E\to X$ and a simplicial set map $\pi: X' \to X$, we define a simplicial distribution  on $f^{\pi}$ by {the composition
$$
\pi_*(p): X' \xrightarrow{p\circ \pi \times \delta_{X'}} D_R(E)\times D_R(X') \xrightarrow{m} D_R(E\times X')
$$ 
whose image lands in the simplicial subset $D_R(\pi^*E)$.
}
}
\end{defn}

Let us unravel this construction. 
Consider a simplicial distribution $p:X\to D_R(E)$ on $f$. 
The first observation which makes this construction work is that 
$$D_R(f)\circ p \circ \pi =\delta_X \circ \pi=D_R(\pi)\circ \delta_{X'}.$$ 
{Therefore $p\circ\pi\times \delta_{X'}$ lands inside the pull-back $(D_R\pi)^*(D_RE)$.}
%Thus we have the map  $p\circ\pi\times \delta_{X'}: X' \to (D_R\pi)^*(D_RE)$. 
The second observation is that for $x' \in X'_n$ and $(e,x)\in E_n \times X_n$ such that $f_n(e) \neq \pi_n(x)$,  Proposition \ref{pro:sDist-characterization} implies that  
$p_n(\pi_n(x'))(e)\cdot \delta^{x'}(x)=0\cdot \delta^{x'}(x)=0$. Therefore
$m \circ (p\circ \pi \times \delta_{X'})$ lands inside the space of distributions $D_R(\pi^*E)$ on the pull-back. 
In general, for $x' \in X'_n$, we have the following formula 
\begin{equation}\label{PiFormulaaa}
(\pi_\ast p)_n(x')=p_n(\pi_n(x'))\cdot \delta^{x'},
\end{equation}
which shows that $\pi_\ast p(x') \in (f^{\pi})^{-1}(x')$. Therefore  $\pi_{\ast}(p) \in \sDist(f^\pi)$ by Proposition \ref{pro:sDist-characterization}.

\begin{defn}\label{def:push-forward}
Given a simplicial distribution $p$ on $f:E\to X$ and a morphism $(\pi,\alpha):f\to f'$ of simplicial scenarios we define the {\it push-forward} distribution,  a simplicial distribution on $f'$, by the following composite:
\begin{equation}\label{dia:push-forward}
%(\pi,\alpha)_* p = D_R(\alpha) \circ \pi_{\ast} %p.
{
(\pi,\alpha)_* p : X' \xrightarrow{\pi_{\ast} (p)} D_R(\pi^*E) \xrightarrow{D_R(\alpha)} D_R(E')
}
\end{equation}  
\end{defn}

{To prove that the push-forward construction is functorial we need two preliminary results.}
\begin{lem}\label{lem:PiComppp}
Let $p$ be a simplicial distribution on $f:E\to X$.
For simplicial set maps $\pi:X' \to X$ and $\pi':X'' \to X'$, we have $(\pi \circ \pi')_\ast p={\pi'}_\ast(\pi_\ast p)$. 
\end{lem}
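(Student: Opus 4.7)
The plan is to reduce the statement to a direct pointwise computation using the explicit formula~(\ref{PiFormulaaa}) together with the product formula~(\ref{eq:p-dot-q}), treating both sides as distributions on the canonically isomorphic iterated pullbacks $(\pi')^*(\pi^*E) \cong (\pi\circ\pi')^*E$.

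First, I would unravel the right-hand side. Applying~(\ref{PiFormulaaa}) to the simplicial distribution $\pi_*p \in \sDist(f^\pi)$, for any $x'' \in X''_n$ one gets
\[
(\pi'_*(\pi_*p))_n(x'') \;=\; (\pi_*p)_n(\pi'_n(x'')) \cdot \delta^{x''} \;=\; \bigl(p_n(\pi_n(\pi'_n(x''))) \cdot \delta^{\pi'_n(x'')}\bigr) \cdot \delta^{x''},
\]
a distribution supported on $(\pi')^*(\pi^*E)_n \subseteq E_n\times X'_n \times X''_n$. On the left-hand side, a single application of~(\ref{PiFormulaaa}) yields
\[
((\pi\circ\pi')_*p)_n(x'') \;=\; p_n(\pi_n(\pi'_n(x''))) \cdot \delta^{x''},
\]
a distribution supported on $(\pi\circ\pi')^*E_n \subseteq E_n\times X''_n$.

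Next, I would compare the two distributions by evaluating pointwise via~(\ref{eq:p-dot-q}). The factor $\delta^{\pi'_n(x'')}$ appearing in the triple product forces the $X'$-coordinate of any element in the support to equal $\pi'_n(x'')$, so the canonical isomorphism $(\pi')^*(\pi^*E)_n \to (\pi\circ\pi')^*E_n$ sending $(e, x', x'') \mapsto (e, x'')$ identifies the two distributions: both assign to $(e,x'')$ the value $p_n(\pi_n(\pi'_n(x'')))(e)$. This establishes the equality $(\pi\circ\pi')_*p = \pi'_*(\pi_*p)$ after the identification.

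The main (and essentially only) obstacle is bookkeeping: one must be careful about which ambient product of simplicial sets each distribution lives in, and make the canonical isomorphism of iterated pullbacks explicit. Once this identification is spelled out, the equality follows immediately from the defining formulas and requires no further argument beyond the product rule~(\ref{eq:p-dot-q}).
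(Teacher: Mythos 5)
Your proposal is correct and follows essentially the same route as the paper's own proof: both apply Equation~(\ref{PiFormulaaa}) twice on one side and once on the other, then compare the resulting distributions pointwise under the canonical identification $(\pi')^*(\pi^*E)\cong(\pi\circ\pi')^*E$. You merely spell out the identification of iterated pullbacks slightly more explicitly than the paper does, which is a harmless refinement rather than a different argument.
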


\Proof{
%\ak{[[I added a proof]]} 
Given $x\in X_n''$ Equation (\ref{PiFormulaaa}) implies that
$$
(\pi'_\ast(\pi_\ast p))_n(x)=(\pi_\ast p)_n(\pi'_n(x))\cdot \delta^x=p_n\left(
\pi_n(\pi'_n(x))\right)\cdot \delta^{\pi'_n(x)}\cdot \delta^x.
$$
Therefore if we apply this map to $(e,\pi'_n(x),x) \in ((\pi')^\ast\pi^\ast(E))_n $, we obtain $p_n\left(
\pi_n(\pi'_n(x))\right)(e)$.
On the other hand, 
$$
(\pi \circ \pi')_\ast p(x)=p_n(\pi_n\circ\pi'_n(x))\cdot \delta^x.
$$
If we apply this map to the corresponding element $(e,x)\in (\pi \circ \pi')^\ast (E)_n$, we obtain the same result.
}

\begin{lem}\label{ReplaasDist}
Given a simplicial set map $\pi: X' \to X$,
a commutative diagram of simplicial set maps
$$
\begin{tikzcd}
E  
 \arrow[rd,"f"'] \arrow[rr,"\alpha"]&& E'
 \arrow[dl,"g"] \\
& X  & 
\end{tikzcd}
$$
and a distribution $p \in \sDist(f)$, we have 
$$
\pi_{\ast}(D_R(\alpha)\circ p)=D_R(\pi^{\ast}(\alpha))\circ \pi_{\ast}p.
$$
\end{lem}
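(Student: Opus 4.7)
The plan is to verify the equality degree-wise by evaluating both simplicial maps at an arbitrary simplex $x' \in X'_n$ and comparing the resulting distributions in $D_R(\pi^*(E')_n)$. The key tool is the explicit formula (\ref{PiFormulaaa}), which I would apply on both sides, together with the description of the pullback $\pi^*(\alpha) : \pi^*E \to \pi^*E'$ as the map induced on $n$-simplices by $(e,y) \mapsto (\alpha_n(e), y)$.

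For the left-hand side, I would first observe that $D_R(\alpha) \circ p$ lies in $\sDist(g)$: composing with $D_R(g)$ and using $g \circ \alpha = f$ reduces to $D_R(f) \circ p = \delta_X$. Applying (\ref{PiFormulaaa}) to this distribution on $g$ gives
\[
\pi_*(D_R(\alpha) \circ p)_n(x') = D_R(\alpha_n)(p_n(\pi_n(x'))) \cdot \delta^{x'}.
\]
Evaluating at an element $(e',y) \in (\pi^*E')_n$ (so $g_n(e') = \pi_n(y)$) and using the definition of $m$, one gets
\[
\delta^{x'}(y) \cdot \sum_{\alpha_n(e) = e'} p_n(\pi_n(x'))(e).
\]

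For the right-hand side, (\ref{PiFormulaaa}) gives $(\pi_* p)_n(x') = p_n(\pi_n(x')) \cdot \delta^{x'}$ as a distribution on $(\pi^*E)_n$. Pushing forward along $\pi^*(\alpha)_n$ and evaluating at $(e',y)$ produces a sum over those $(e,y'') \in (\pi^*E)_n$ with $\alpha_n(e) = e'$ and $y'' = y$, yielding the same expression $\delta^{x'}(y) \cdot \sum_{\alpha_n(e)=e'} p_n(\pi_n(x'))(e)$.

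The two sides therefore agree for all $n$ and all $x'$, proving the lemma. I do not anticipate any real obstacle: the argument is pure unpacking, relying on (\ref{PiFormulaaa}), the multiplicativity of $m$ in the first coordinate, and the fact that $\pi^*(\alpha)$ acts as the identity on the second coordinate of the pullback. The only care needed is to keep track that $(e',y)$ lies in $(\pi^*E')_n$, i.e., that $g_n(e') = \pi_n(y)$, which is automatically satisfied whenever the expressions above are nonzero because $p_n(\pi_n(x'))$ is supported on $f_n^{-1}(\pi_n(x'))$ by Proposition \ref{pro:sDist-characterization} and $\delta^{x'}(y) \neq 0$ forces $y = x'$.
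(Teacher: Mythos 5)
Your proposal is correct and follows essentially the same route as the paper's proof: both sides are evaluated degree-wise on elements of the pullback using Equation (\ref{PiFormulaaa}) and the fact that $\pi^*(\alpha)$ acts by $\alpha_n$ on the first coordinate, reducing each side to $\sum_{e:\,\alpha_n(e)=e'} p_n(\pi_n(x'))(e)$. Your extra checks (that $D_R(\alpha)\circ p\in\sDist(g)$ and that the $\delta^{x'}$ factor forces the second coordinate to be $x'$) are left implicit in the paper but are consistent with its argument.
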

\Proof{
%\ak{[[I added a proof]]}
Given $x' \in X'_n$ and $(e',x') \in \pi^{\ast}(E')_n$, using Equation (\ref{PiFormulaaa}) we have 
$$
\begin{aligned}
\pi_{\ast}(D_R(\alpha)\circ p)_n(x')(e',x')&=
(D_R(\alpha)\circ p)_n
\left(\pi_n(x')\right)(e')
\\
&=D_R(\alpha_n)\left(p_n(
\pi_n(x'))\right)(e')\\
&=
\sum_{e\,:\,\, \alpha_n(e)=e'}p_n\left(\pi_n(x')\right)(e).
\end{aligned}
$$
On the other hand,
$$
\begin{aligned}
D_R(\pi_{\ast}(\alpha)_n)\left((\pi_\ast p)_n(x')\right)(e',x')&=
\sum_{\pi^\ast
(\alpha)_n(e,x')=(e',x')}(\pi_\ast p)_n(x')(e,x')
\\
&=
\sum_{e\,:\,\, \alpha_n(e)=e'}p_n\left(\pi_n(x')\right)(e).
\end{aligned}
$$

}
\Pro{\label{pro:sDist-functor}
The push-forward construction in Definition \ref{def:push-forward} gives a functor
$$\sDist: \catsScen \to \catSet.$$
}
\Proof{%\ak{[[I added a proof]]}
For a simplicial scenario $f: E \to X$ and $p \in \sDist(f)$, it is clear that  
$(\Id_{X},\Id_{E})_\ast p=p$. 
Applying  Lemma \ref{lem:PiComppp} and \ref{ReplaasDist}
to the morphisms
$(\pi,\alpha): f \to f'$ and 
$(\pi',\alpha'): f' \to f''$ of $\catsScen$  we obtain
$$
\begin{aligned}
(\pi',\alpha')_\ast\left((\pi,\alpha)_{\ast}p\right)
&=D_R(\alpha') \circ \pi'_\ast((\pi,\alpha)_\ast p)\\
&=D_R(\alpha') \circ \pi'_\ast(D_R(\alpha) \circ \pi_{\ast} p)
\\
&=
D_R(\alpha') \circ D_R((\pi')^{\ast}(\alpha))\circ \pi'(\pi_{\ast}p)\\
&=D_R\left(\alpha' \circ (\pi')^{\ast}(\alpha)\right)\circ \left((\pi \circ \pi')_{\ast}p\right)\\
&=\left(\pi \circ \pi', \alpha'\circ (\pi')^*( \alpha)\right)_\ast p
\\
&=((\pi',\alpha')\circ 
(\pi,\alpha))_\ast p.
\end{aligned}
$$
}
%

%\begin{defn}\label{def:category-simplicial-dist}
%{\rm
%The category $\catsDist$ is the Grothendieck construction (category of elements) of the functor $\sDist:\catsScen \to \catSet$. Its objects consist of pairs $(f,p)$, where $f:E\to X$ is a simplicial scenario and $p$ is a simplicial distribution on this scenario. Morphisms are given by  $(\pi,\alpha):(f,p)\to (f',p')$, where $(\pi,\alpha)$ is a morphism in $\catsScen$ such that $(\pi,\alpha)_*(p)=p'$.  
%}
%\end{defn} 

\subsection{The natural isomorphism}

{
In this section we construct a natural isomorphism $\zeta:\bEmp \to \sDist \circ N$. 
The first step is to construct a simplicial distribution on the nerve of a bundle scenario from a given empirical model on the bundle.
}

\Def{\label{def:Np}
Let $f:\Gamma \to \Sigma$ be a bundle scenario and $p \in \bEmp(f)$ be an empirical model. We define a simplicial distribution $Np$ on the simplicial  scenario $Nf :  N \Gamma \to N \Sigma$ as follows: 
\begin{equation}\label{eq:Np}
(Np)_n(\sigma_1,\cdots,\sigma_n)(\gamma_1,\cdots,\gamma_n)=\begin{cases}p_{\cup_{i=1}^n\sigma_i}(\cup_{i=1}^n\gamma_i) & (\gamma_1,\cdots,\gamma
_n)\in (N f)_n^{-1}(\sigma_1,\cdots,\sigma
_n) \\
0 & \text{otherwise,}
\end{cases}
\end{equation}
where
$(\sigma_1,\cdots,\sigma
_n)\in (N \Sigma)_n$.
}

The fact that Equation (\ref{eq:Np}) gives a well-defined simplicial distribution is proved in Lemma \ref{lem:N-p}.
%The natural transformation will send an empirical model $p$ to the the simplicial distribution $Np$. %We need some preliminary lemmas to prove that this assignment is a natural isomorphism.

%
\Pro{\label{pro:natural-iso-simplicial} 
For a bundle scenario $f:\Gamma\to \Sigma$ and 
$p \in \bEmp(f)$,  defining $\zeta_f(p)=Np$ gives a natural isomorphism 
$\zeta:\bEmp \to \sDist \circ N$.
}
\begin{proof}
%\ak{
%[[I completed the proof]]}
Given $p,q \in \bEmp(f)$ such that 
$
\zeta_f(p)=\zeta_f(q)$, we have 
$$
p_\sigma(\gamma)=
(Np)_1(\sigma)(\gamma)=(Nq)_1(\sigma)(\gamma)=q_\sigma(\gamma).$$ 
Therefore $\zeta_f$ is injective. 
To prove surjectivity, let $\tilde{p}\in \sDist(Nf)$. 
For every $\sigma \in \Sigma$ we define $p_{\sigma}=\tilde{p}_1(\sigma)$.
%For every $\sigma \in \Sigma$ and $\gamma \in f^{-1}(\sigma)$, we define $p_\sigma(\gamma)=\tilde{p}_1(\sigma)(\gamma)$. 
To see that $p \in \bEmp(f)$, let $\sigma' \subset \sigma$ and $\gamma'\in f^{-1}(\sigma')$. Then by {Equation (\ref{eq:Improppp})}  and Lemma \ref{lem:PinNF}
%, and the simplicial structure of $\tilde{p}$,
  we have 
$$
\begin{aligned}
p_\sigma|_{\sigma'}(\gamma')&= \sum_{\gamma'\subset \gamma\in f^{-1}(\sigma)}
p_{\sigma}(\gamma)\\
&=\sum_{\gamma'\subset \gamma\in f^{-1}(\sigma)}
\tilde{p}_1({\sigma})(\gamma)\\
&=
\sum_{\tau \in f^{-1}(\sigma-\sigma'):\, \gamma' \cup \tau \in \Gamma}\tilde{p}_1({\sigma' \cup (\sigma - \sigma')})(\gamma' \cup \tau)\\
&= \sum_{\tau \,:\,\,(\gamma',\tau)\in (Nf)^{-1}(\sigma',\sigma-\sigma')} \tilde{p}_2(\sigma',\sigma-\sigma')(\gamma',\tau) \\
&=D_R(d_2)(\tilde{p}_2\left(\sigma',\sigma-\sigma')\right)(\gamma')\\
&=\tilde{p_1}\left(d_2(\sigma',\sigma-\sigma')\right)(\gamma')
\\
&=\tilde{p_1}(\sigma')(\gamma')
=p_{\sigma'}(\gamma').
\end{aligned}
$$
Again, using Lemma \ref{lem:PinNF} we obtain that $\zeta_f(p)=\tilde{p}$. 

Now, we prove the naturality of $\zeta$. Given a morphism $(\pi,\alpha): f\to 
f'$  of $\catbScen$, we prove that the following diagram commutes
\begin{equation}
\begin{tikzcd}[column sep=huge,row sep=large]
\bEmp(f) \arrow[r,"(\pi{,}\alpha)_\ast"] \arrow[d,"\zeta_f"'] & \bEmp(f') 
\arrow[d,"\zeta_{f'}"] \\
 \sDist(Nf) \arrow[r]  
 \arrow[r,"(T(\pi){,}N\alpha)_\ast"]& \sDist(Nf')
\end{tikzcd}
\end{equation}
For this let $p \in \bEmp(f)$. By Lemma \ref{lem:Npiaast} and \ref{lem:Nalphaast} we have 
$$
\begin{aligned}
\zeta_{f'}((\pi,\alpha)_\ast p)&=N(\alpha_\ast
(\pi_{\ast}p))
=
D_R(N\alpha)\circ N(\pi_\ast p)\\
&=D_R(N\alpha) \circ T(\pi)_\ast(Np)\\
&=(T(\pi),N\alpha)
_\ast(Np)\\
&=(T(\pi),N\alpha)
_\ast(\zeta_f(p)).
\end{aligned}
$$
\end{proof}

Combining Proposition \ref{pro:fully-faithful-simplicial} and \ref{pro:natural-iso-simplicial} we obtain our main result of Section \ref{sec:category-simplicial-scenarios}.

\Thm{\label{thm:main-simplicial}
Sending a bundle scenario $f:\Gamma\to \Sigma$ to the simplicial scenario $Nf:N\Gamma\to N\Sigma$ specifies a fully faithful functor $N:\catbScen \to \catsScen$. Moreover, there is a natural isomorphism $\zeta:\bEmp \to \sDist\circ N$. 
}

\section{Contextuality for bundle scenarios}\label{sec:convexity-and-contextuality}

In Sections  \ref{sec:bScen} and \ref{sec:category-simplicial-scenarios} we have introduced {distributions on} scenarios of bundle scenarios based on simplicial complexes and simplicial sets, respectively.
In this section we use the theory of convex categories \cite{kharoof2022simplicial} to introduce contextuality for bundle scenarios in two versions. 
Again using this theory we obtain the upgraded Diagram (\ref{dia:upgraded}) together with the natural isomorphisms $\tilde\eta$ and $\tilde\zeta$ relating empirical models and simplicial distributions. 
Our main result is Theorem \ref{thm:contextuality} relating different versions of contextuality via these natural isomorphisms.

\subsection{Convexity}
%In this section we will use the theory of convex categories developed in \cite{ConvCat}. 
{Convexity can be studied in an abstract way for arbitrary semirings using the notion of an algebra over a monad. The relevant monad in this case is the distribution functor $D_R:\catSet\to \catSet$.  In Section \ref{sec:ConvCat} we recall basic properties of convexity in this abstract language.
}
%Some of the basic properties of convexity we will need are explained in Section \ref{sec:ConvCat}.
We begin by observing that the empirical model and simplicial distribution functors defined from the three kinds of categories of scenarios into 
%$\catSet$ 
the category of sets
actually land in the category $\catConv_R$ of $R$-convex sets. 

{
The set of simplicial distributions on $f_{(X,Y)}: X\times Y\to X$ can be identified with the set  of simplicial set maps from $X\to D_R(Y)$ as discussed in Remark \ref{rem:f-XY}.
In \cite{kharoof2022simplicial} it is shown that $U=\catsSet(X,D_RY)$ is an $R$-convex set with the structure map
%The structure map 
$\nu=\nu^U: D_R (U)\to U$  
%is 
defined as follows: 
\begin{equation}\label{eq:nu}
	\nu(Q)_n(x)=\sum_{p \in U}Q(p)\,p_n(x). 
\end{equation}
where $Q \in D_R(U)$ and $x\in X_n$. 
%For more details see  Proposition \ref{pro:2-15} and Equation (\ref{eq:sSet-nu-eq10}).
Next, we generalize this observation.
}

\Pro{\label{SDisconv1}
Given a simplicial {set map} $f:E \to X$, the set $\sDist(f)$ of simplicial distributions on $f$ is an $R$-convex set. 
}
\begin{proof} 
The set $\sDist(f)$ is contained in $U=\catsSet(X,D_R(E))$.
% which is an $R$-convex set according to Proposition \ref{pro:2-15}.
%Proposition 2.15 
%in \cite{kharoof2022simplicial}. 
%The structure map $\nu^U: D_R U\to U$  is defined as follows: 
%\begin{equation}
%	\nu(Q)_n(x)=\sum_{p \in U}Q(p)\,p_n(x). 
%\end{equation}
%where $Q \in D_R(U)$ and $x\in X_n$; see Equation (\ref{eq:sSet-nu-eq10}).
It suffices to show that for every $Q \in D_R(\sDist(f))$, the 
distribution $\nu(Q)$ lies in $\sDist(f)$.
Given $x \in X_n$ and $e \in E_n$ such that $f_n(e)\neq x$, {using Equation (\ref{eq:nu})} and Proposition \ref{pro:sDist-characterization} we have 
$$
\nu(Q)_n(x)(e)=\sum_{p \in U }Q(p)\,p_n(x)(e)
=\sum_{p \in \sDist(f) }Q(p)\,p_n(x)(e)=0.
$$
Again, by  Proposition \ref{pro:sDist-characterization} this implies 
$\nu(Q) \in \sDist(f)$.
\end{proof}

{
Using this result we can also equip the set of empirical models on bundle scenarios with an $R$-convex set structure using the natural isomorphism $\zeta$. Furthermore, the set of empirical models on scenarios also inherit an $R$-convex set structure via the other natural isomorphism $\eta$. We summarize this observation:
}
 
\begin{cor}\label{EmpConv}
Let $(\Sigma,O)$ be a scenario and $f:\Gamma\to \Sigma$ be a bundle scenario.
\begin{enumerate}
\item The set $\Emp(\Sigma,O)$ of empirical models on  $(\Sigma,O)$ is an $R$-convex set and the isomorphism 
$$\eta_{(\Sigma,O)} : \Emp(\Sigma,O) \to \bEmp({f_{(\Sigma,O)}})$$
 of Proposition \ref{pro:natural-iso-bundle} is an $R$-convex map.

\item The set $\bEmp(f)$ of empirical models on  $f$ is an $R$-convex set and the isomorphism 
$$\zeta_f : \bEmp(f) \to \sDist(Nf)$$ of Proposition \ref{pro:natural-iso-simplicial} is an $R$-convex map.
\end{enumerate}
\end{cor}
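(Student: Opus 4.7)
The plan is to transport the $R$-convex structures from the right-hand sides of the two natural isomorphisms back to the empirical model side, using Proposition~\ref{SDisconv1} as the starting point. Concretely, for part~(2), since $Nf \colon N\Gamma \to N\Sigma$ is a simplicial set map, Proposition~\ref{SDisconv1} endows $\sDist(Nf)$ with an $R$-convex structure map $\nu^{\sDist(Nf)}$ given by Equation~(\ref{eq:nu}), and $\zeta_f$ is a bijection by Proposition~\ref{pro:natural-iso-simplicial}. I would define the structure map $\nu^{\bEmp(f)} \colon D_R(\bEmp(f)) \to \bEmp(f)$ by the evident pointwise formula
\[
\nu^{\bEmp(f)}(Q)_\sigma \;=\; \sum_{p \in \bEmp(f)} Q(p)\, p_\sigma,
\]
for $Q \in D_R(\bEmp(f))$ and $\sigma \in \Sigma$.

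The first step is to check that this family is actually an empirical model, i.e., that $\nu^{\bEmp(f)}(Q)_\sigma |_{\sigma'} = \nu^{\bEmp(f)}(Q)_{\sigma'}$ for every $\sigma' \subset \sigma$. This follows directly from the $R$-linearity of the restriction map $D_R(r_{\sigma,\sigma'})$ together with $p_\sigma|_{\sigma'} = p_{\sigma'}$ for each $p$. The second step is to verify the two $D_R$-algebra axioms (unit and multiplication) for $\nu^{\bEmp(f)}$; because the structure is defined one simplex at a time, these reduce to the algebra axioms for the free $R$-convex sets $D_R(f^{-1}(\sigma))$, which hold by definition of the distribution monad.

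The third step is to verify that $\zeta_f$ is $R$-convex, i.e., that $\zeta_f \circ \nu^{\bEmp(f)} = \nu^{\sDist(Nf)} \circ D_R(\zeta_f)$. Unpacking both sides using Equations~(\ref{eq:Np}) and~(\ref{eq:nu}) at a simplex $(\sigma_1,\dots,\sigma_n) \in (N\Sigma)_n$ and a lift $(\gamma_1,\dots,\gamma_n) \in (Nf)_n^{-1}(\sigma_1,\dots,\sigma_n)$, both reduce to $\sum_{p} Q(p)\, p_{\cup_i \sigma_i}(\cup_i \gamma_i)$, while off the fiber both sides vanish (the right by Proposition~\ref{pro:sDist-characterization}, the left by the case split in Equation~(\ref{eq:Np})). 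This uses that $\zeta_f$ is a bijection, so $D_R(\zeta_f)(Q)(\tilde{p}) = Q(\zeta_f^{-1}(\tilde{p}))$ when $\tilde{p}$ lies in the image.

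Part~(1) then follows by exactly the same recipe applied to $\eta_{(\Sigma,O)}$: we have just established that $\bEmp(f_{(\Sigma,O)})$ is $R$-convex, the analogous pointwise definition $\nu^{\Emp(\Sigma,O)}(Q)_\sigma(s) = \sum_e Q(e)\, e_\sigma(s)$ makes $\Emp(\Sigma,O)$ into an $R$-convex set, and the commuting square for $\eta$ is immediate from the explicit formula $\eta_{(\Sigma,O)}(e)_\sigma(\sigma,s) = e_\sigma(s)$. The main obstacle here is not conceptual but notational: one must keep track of where the convex combination is formed (pointwise on simplices of $\Sigma$ versus pointwise on $n$-simplices of $N\Sigma$) and confirm that the fiber supports are respected. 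Since the natural isomorphisms $\eta$ and $\zeta$ are themselves defined by evaluation formulas that preserve the pointwise character of convex combinations, each identification reduces to a direct substitution.
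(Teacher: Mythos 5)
Your proof is correct, but it is organized differently from the paper's. The paper treats this corollary as an immediate consequence of transport of structure: since $\zeta_f$ (resp.\ $\eta_{(\Sigma,O)}$) is a bijection and $\sDist(Nf)$ is already an $R$-convex set by Proposition~\ref{SDisconv1}, the paper simply \emph{defines} the convex structure on $\bEmp(f)$ (resp.\ $\Emp(\Sigma,O)$) as the one pulled back along the isomorphism, so that the claim that the isomorphism is an $R$-convex map holds tautologically and no verification is needed. You instead construct the pointwise structure map $\nu^{\bEmp(f)}(Q)_\sigma=\sum_p Q(p)\,p_\sigma$ from scratch, check closure under restriction compatibility and the two $D_R$-algebra axioms, and then prove that $\zeta_f$ intertwines the two structures by unwinding Equations~(\ref{eq:Np}) and~(\ref{eq:nu}). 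The two routes yield the same structure (your intertwining computation is in effect the proof that the transported structure coincides with the pointwise one), and all of your verification steps are sound; what your approach buys is an explicit formula for the convex combination of empirical models, which the paper implicitly relies on anyway when it later argues that $\Emp$ and $\bEmp$ land in $\catConv_R$, while the paper's approach buys brevity at the cost of leaving that formula implicit. One very minor omission on your side: when checking that $\nu^{\bEmp(f)}(Q)_\sigma$ is a well-defined element of $D_R(f^{-1}(\sigma))$ you should also note that it has finite support and total mass $1$, though both are immediate.
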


\Pro{\label{SDisconv2} 
Given a morphism $(\pi,\alpha):f \to f'$ of $\catsScen$,  the induced map $
(\pi,\alpha)_\ast:\sDist(f) \to \sDist(f')$ between the sets of simplicial distributions
is a morphism of $\catConv_R$.
}
\begin{proof}
Since $(\pi,\alpha)$ factors as $(\Id,\alpha)\circ(\pi,\Id)$ as a consequence of Equation (\ref{eq:composition-sScen}), it suffices to show that $(\Id,\alpha)_\ast$ and $(\pi,\Id)_\ast$ are morphisms of $\catConv_R$. 
The map 
$D_R(\alpha): D_R(\pi^{\ast}(E)) \to D_R(E')$ is a morphism of $\catsConv_R$, thus by  Proposition \ref{pro:2-15}
%Proposition 2.15 in \cite{kharoof2022simplicial}
 the map $D_R(\alpha)_\ast: \catsSet(X,D_R\left(\pi^{\ast}(E)\right)) \to \catsSet(X,D_R(E'))$ is   a morphism of $\catConv_R$. The map $(\Id,\alpha)_\ast$ is obtained by the restriction of $D_R(\alpha)_\ast$ to $\sDist(f^\pi)$, hence it is a morphism of $\catConv_R$. In order to prove that $(\pi,\Id)_\ast=\pi_\ast$ belongs to $\catConv_R$ we show that the following diagram commutes:
$$
\begin{tikzcd}[column sep=huge,row sep=large]
D_R(\sDist(f)) 
\arrow[rr,"D_R(\pi_{\ast})"]
 \arrow[d,"\nu^{\sDist(f)}"] && D_R(\sDist(f^\pi)) 
 \arrow[d,
 "\nu^{\sDist(f^\pi)}"] \\
\sDist(f) 
 \arrow[rr,"\pi_\ast"] && \sDist(f^\pi)
\end{tikzcd}
$$
Given $Q \in D_R(\sDist(f))$, $x'\in X'_n$ and $(e,x')\in \pi^\ast(E)_n$, we have
$$ 
\begin{aligned}
\nu(D_R(\pi_\ast)(Q))_n(x')(e,x')&=
\sum_{q \in \sDist(f^\pi)} D_R(\pi_\ast)(Q)(q)q_n(x')(e,x')\\
&=
\sum_{q\in \sDist(f^\pi)} \;\sum_{p:\,\pi_\ast p=q}Q(p)(\pi_\ast p)_n(x')(e,x')
\\
&=\sum_{p \in \sDist(f)} Q(p)p_n(\pi_n(x'))(e)
\\
&=\nu(Q)_n(\pi_n(x'))(e)\\
&=\pi_\ast(\nu(Q))_n(x')(e,x').
\end{aligned}
$$
\end{proof}

This result implies that  the functor $\sDist$ lands in the category of convex sets:
$$
\sDist: \catsScen \to \catConv_R.
$$
%\Cor{\label{cor:Scen-bScen-conv-mor} 
{Similarly, t}he functors $\Emp$ and $\bEmp$ land in $\catConv_R$. In more details, we have the following:
\begin{enumerate}
\item Given a morphism $(\pi,\alpha):(\Sigma,O) \to (\Sigma',O')$ of $\catScen$,  the induced map $
(\pi,\alpha)_\ast:\Emp(\Sigma,O) \to \Emp(\Sigma',O')$
is a morphism of $\catConv_R$.

\item Given a morphism $(\pi,\alpha):f \to f'$ of $\catbScen$,  the induced map $
(\pi,\alpha)_\ast:\bEmp(f) \to \bEmp(f')$
is a morphism of $\catConv_R$.
\end{enumerate}
{The first statement can be proved as follows:} By Diagram (\ref{piEe}) we have $(\pi,\alpha)_\ast=\eta_{(\Sigma',O')}^{-1}\circ \mathcal{E}(\pi,\alpha)_\ast \circ \eta_{(\Sigma,O)}$. Therefore by Corollary \ref{EmpConv} and Proposition \ref{SDisconv2} we obtain that $(\pi,\alpha)_\ast$ is an $R$-convex map. The second part is similar.
%
%\cor{\label{cor:codomain}
%The codomain of the functors $\Emp$, $\bEmp$, and $\sDist$ is in $\catConv_R$.}

%Pro 5.5 moved to appendix

%
{
In Proposition \ref{pro:ConvisConv} we show that $\catConv_R$ is an $R$-convex category. This fact allows us to lift the empirical model and simplicial distribution functors to the free $R$-convex categories ({Definition \ref{def:Free CatFun}}) on the associated category of scenarios using}
%As shown in Proposition \ref{pro:ConvisConv}, the category $\catConv_R$ is an  $R$-convex category.
%Then 
%by Proposition \ref{pro:ConvisConv} 
%we can define 
the transposes of the functors $\Emp$, $\bEmp$, and $\sDist$ with respect to the adjunction $\Cat \dashv \catConvCat_R$ {to obtain the following functors}:
$$
\begin{aligned}
\widetilde{\Emp}: D_R(\catScen) &\to \catConv_R\\
\widetilde{\bEmp}: D_R(\catbScen) &\to \catConv_R\\
\widetilde{\sDist}: D_R(\catsScen) &\to \catConv_R.
\end{aligned}
$$
%The free convex categories of scenarios are related by the functors
%$$
%\begin{aligned}
%D_R(\eE):D_R(\catScen) &\to D_R(\catbScen) \\
%D_R(N):D_R(\catbScen) &\to D_R(\catsScen).
%\end{aligned}
%$$ 

%
\begin{cor}\label{cor:upgrade-convex} 
The free convex categories of scenarios are related by the fully faithful functors
$$
\begin{aligned}
D_R(\eE):D_R(\catScen) &\to D_R(\catbScen) \\
D_R(N):D_R(\catbScen) &\to D_R(\catsScen).
\end{aligned}
$$ 
%The functors  $D_R(\eE)$ and 
%$D_R(N) $ are fully faithful. 
Moreover, the natural isomorphisms $\eta$ and $\zeta$ lift to the  natural isomorphisms 
$$
\begin{aligned}
\tilde \eta:\widetilde{\Emp} &\to \widetilde{\bEmp}\circ D_R(\eE)\\
\tilde \zeta:\widetilde{\bEmp} &\to \widetilde{\sDist} \circ D_R(N).
\end{aligned}
$$ 
%and 
%$$\tilde \zeta:\widetilde{\bEmp} \to \widetilde{\sDist} \circ D_R(N).$$ 
\end{cor}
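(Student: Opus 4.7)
The plan is to deduce both parts from Theorems \ref{thm:main-bundle} and \ref{thm:main-simplicial} by invoking the convex-categorical machinery summarized in Section \ref{sec:ConvCat}. Two ingredients do the work: the 2-functoriality of the free convex category construction $D_R : \Cat \to \catConvCat_R$ of Definition \ref{def:Free CatFun}, and Proposition \ref{pro:Nattrans} on lifting natural transformations into $\catConv_R$ to transposed functors out of a free convex category. The target $\catConv_R$ is itself an $R$-convex category by Proposition \ref{pro:ConvisConv}, which is what makes such transposes available.

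For the fully-faithfulness assertion, the hom-sets of $D_R(\catC)$ are formal finite $R$-convex combinations of morphisms of $\catC$ with common source and target, and $D_R(F)$ acts termwise by applying $F$ to each summand. Consequently, $D_R$ preserves fully faithful functors: if $F$ induces a bijection on each hom-set, then the induced map on formal convex combinations is again a bijection. Since $\eE$ and $N$ are fully faithful by Theorems \ref{thm:main-bundle} and \ref{thm:main-simplicial}, so are $D_R(\eE)$ and $D_R(N)$.

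For the natural-isomorphism assertion, Proposition \ref{SDisconv2} together with the remarks immediately preceding Corollary \ref{cor:upgrade-convex} show that $\Emp$, $\bEmp$, and $\sDist$ factor through $\catConv_R$, and by Corollary \ref{EmpConv} the components of $\eta$ and $\zeta$ are $R$-convex isomorphisms. Applying Proposition \ref{pro:Nattrans} to $\eta : \Emp \Rightarrow \bEmp \circ \eE$ and to $\zeta : \bEmp \Rightarrow \sDist \circ N$ produces natural transformations between the corresponding transposes, which under the identifications $\widetilde{\bEmp \circ \eE} = \widetilde{\bEmp} \circ D_R(\eE)$ and $\widetilde{\sDist \circ N} = \widetilde{\sDist} \circ D_R(N)$ provided by the adjunction $D_R \dashv U$ become the desired $\tilde{\eta}$ and $\tilde{\zeta}$. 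Since the lifted transformations have the same components as the originals, and those are already isomorphisms by Propositions \ref{pro:natural-iso-bundle} and \ref{pro:natural-iso-simplicial}, the lifts are natural isomorphisms.

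The main subtlety I anticipate is confirming that the transpose of a composite agrees with the composite of lifts, so that $\widetilde{\bEmp \circ \eE} = \widetilde{\bEmp} \circ D_R(\eE)$ and similarly for $N$; this should be a formal consequence of the universal property of $D_R(\catC)$, since both composites agree on the generating morphisms of $D_R(\catScen)$ and $D_R(\catbScen)$ coming from $\catScen$ and $\catbScen$, and hence extend uniquely to agree on all formal $R$-convex combinations. Granting this routine bookkeeping, the remainder of the argument is a direct application of the two inputs cited above.
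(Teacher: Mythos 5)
Your proposal is correct and follows essentially the same route as the paper: fully-faithfulness of $D_R(\eE)$ and $D_R(N)$ from the fact that $D_R$ sends bijections of hom-sets to bijections, and the lifted natural isomorphisms from Proposition \ref{pro:Nattrans} combined with the identification of the transposes of $\bEmp\circ\eE$ and $\sDist\circ N$ with $\widetilde{\bEmp}\circ D_R(\eE)$ and $\widetilde{\sDist}\circ D_R(N)$. Your extra remarks on why the transpose of a composite agrees with the composite of lifts, and on why the lifts remain isomorphisms componentwise, only make explicit what the paper leaves implicit.
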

\Proof{The first one follows from the fact that $\eE$ and $N$ are fully faithful (Theorem \ref{thm:main-bundle} and Theorem \ref{thm:main-simplicial}), and $D_R$ sends an isomorphism of sets to an isomorphism. 
Note that transposes for the functors $\bEmp\circ \eE$ and $\sDist\circ N$ with respect to the adjunction $\Cat \dashv \catConvCat_R$ are $\widetilde{\bEmp}\circ D_R(\eE)$ and $\widetilde{\sDist} \circ D_R(N)$; respectively. Therefore, we obtain the second part directly from Theorem \ref{thm:main-bundle}, Theorem \ref{thm:main-simplicial}, and Proposition \ref{pro:Nattrans}.
}

\subsection{Contextuality}
In \cite{okay2022simplicial} the notion of contextuality is defined for simplicial distributions on pairs $(X,Y)$ of simplicial sets.
In this section we extend this definition to simplicial distributions on simplicial bundle scenarios.
First we introduce deterministic distributions in the bundle picture and then define a comparison map that  sends a probabilistic mixture of deterministic distributions to a simplicial distribution. 
Analogously we define contextuality for empirical models on scenarios $(\Sigma,O)$ and bundles scenarios $f:\Gamma\to \Sigma$ of simplicial complexes.
The natural isomorphisms $\eta$ and $\zeta$ relating empirical models and simplicial distributions on there categories of scenarios can be used to compare these {different} notions of contextuality.

\Def{\label{def:sSect}
Let $f:E \to X$ be a simplicial set map.
The set of sections of $f$, i.e., simplicial set maps $s:X\to E$ such that $f\circ s =\idy_X$, will be denoted by $\sSect(f)$.
{A \emph{deterministic distribution} is a simplicial distribution of the form
$$
\delta^s:X\xrightarrow{s} E\xrightarrow{\delta_E} D_R(E)
$$
where $s$ is a section of $f$.}
%A section $s:X\to E$ g a simplicial distribution: $\delta^s:X\xrightarrow{s} E\xrightarrow{\delta_E} D_R(E)$. We will refer to $\delta^s$ as deterministic distributions.
}

Sending a section $s$ to the associated deterministic distribution $\delta^s$ specifies an injective map 
\begin{equation}\label{eq:delta-f}
\delta_f: \sSect(f) \to \sDist(f)
\end{equation} 

\Lem{\label{lem:det-preserved}
Given a morphism $(\pi,\alpha):f \to f'$ of $\catsScen$, the induced map $(\pi,\alpha)_\ast$ sends a deterministic distribution in $\sDist(f)$ to a deterministic distribution in $\sDist(f')$.  
}
\begin{proof}
Given $s \in \sSect(f)$,  the following diagram commutes:
$$
\begin{tikzcd}[column sep=huge,row sep=large]
E \arrow[d,"{f}"'] & X 
\arrow[ld,"\idy_{X}"] \arrow[l,"{s}"'] &
X' \arrow[l,"\pi"'] \arrow[d,"\idy_{X'}"] \\
X && \arrow[ll,"{\pi}"]  X'
\end{tikzcd} 
$$
and
induces a map $(s \circ\pi)\times \Id_{X'} : X' \to \pi^\ast(E)$, which gives a section of $f^{\pi}$. In addition, for $x' \in X'_n$ and $(e,x') \in \pi^\ast(E)_n$, we have 
$$
\begin{aligned}
(\delta^{(s\circ \pi)\times\Id_{X'}})_n(x')(e,x')&=\delta^{(s_n\circ \pi_n(x'),x')}(e,x')\\
&=
\delta^{(s_n(\pi_n(x'))}(e)\\
&=(\delta^s)_n
(\pi_n(x'))(e)\\
&=\pi_\ast(\delta^s)_n(x')(e,x').
\end{aligned}
$$
Therefore $\pi_\ast(\delta^s)=
\delta^{(s \circ \pi) \times \Id_{X'}}$.
Now, for a section $\tilde{s}$ of 
$f^\pi$, we have
$$
D_R(\alpha)\circ \delta^{\tilde{s}}=
D_R(\alpha)\circ \delta_{\pi^\ast(E)} \circ \tilde{s} = \delta_{E'} \circ \alpha \circ \tilde{s}= \delta^{\alpha \circ \tilde{s}}.
$$
This implies that 
\begin{equation}\label{eq:seccc}
(\pi,\alpha)_\ast \delta^{s}=\delta^{\alpha \circ \left((s \circ \pi) \times \Id_{X'} \right)}.
\end{equation}
\end{proof}

\Pro{
{
Sending a scenario to the set of sections of that scenario gives a functor 
$$
\sSect: \catsScen \to \catSet
$$
and 
}
$\delta: \sSect \to \sDist$ defined by $\delta_f$ in (\ref{eq:delta-f}) at each simplicial scenario $f$, is a natural transformation. 
%By defining $\sSect(\pi,\alpha)(s)=\alpha \circ \left((s \circ \pi) \times \Id_{X'} \right)$, we obtain a functor $\sSect: \catsScen \to \catSet$. 
%Moreover, 
%$\delta: \sSect \to \sDist$ defined by $\delta_f$ at each simplicial scenario $f$ is a natural transformation. 
}
\Proof{{
Given a morphism $(\pi,\alpha):f\to f'$ and a section $s$ of $f$ the induced section 
$${(\pi,\alpha)_*(s): X' \xrightarrow{(s\circ \pi)\times {\idy_{X'}} } \pi^*(E) \xrightarrow{\alpha} E'}$$
% defined by $\alpha \circ \left((s \circ \pi) \times \Id_{X'} \right)$.
%by the composite
%$$
%X' \to \pi^* E \xrightarrow{\alpha} E'
%$$
%where the first map is induced by $s\circ \pi:X' \to E$ and the identity map $\idy_{X'}$.
By Equation (\ref{eq:seccc}) and since $\sDist$ is a functor 
%Lemma \ref{lem:det-preserved}
this assignment gives a functor. Moreover, again  the same equation implies that
% we obtain that 
$\delta: \sSect \to \sDist$ is a natural transformation.
}
}

%\begin{defn}\label{TheeeetaDef}
Let $f:E \to X$ be a simplicial set map.
We define  
\begin{equation}\label{eq:Theta-f}
\Theta_f: D_R(\sSect(f)) \to \sDist(f)
\end{equation} 
to be the transpose of 
the morphism $\delta_f :\sSect(f) \to \sDist(f)$ of
$\catConv_R$ with respect to the adjunction 
$\catSet \dashv \catConv_R$.
%\end{defn}
% 
More explicitly, for $d=\sum_{s} {d(s)} s \in D_R(\sSect(f))$, we have  
$$
\Theta_f(d)_n(x)(e) = \sum_s{d(s)} \delta^{s_n(x)}(e), 
$$
%\comm{As I suggested before "$\Theta_n(x)$". }
where $x\in X_n$ and $e\in E_n$.
The naturality of $\delta$ implies that 
\begin{equation}\label{eq:Theta}
\Theta: D_R \circ \sSect \to \sDist
\end{equation}
{defined by $\Theta_f$ in (\ref{eq:Theta-f}) at each object of $\catsScen$}
 is a natural transformation.
\begin{defn}\label{def:Context}
A simplicial distribution $p \in \sDist(f)$ is called {\it non-contextual} if it lies in the image of $\Theta_f$. Otherwise, it is called {\it contextual}.
\end{defn}
Note that by Remark \ref{rem:f-XY} this definition specializes to 
 Definition 3.10 in \cite{okay2022simplicial}  when the scenario is of the form $f_{(X,Y)}:X\times Y\to X$.
% 
%\begin{defn}

%The natural isomorphisms $\eta$ and $\zeta$ allows us to import the notion of contextuality defined for simplicial \aak{distributions} to empirical models:
%Let $(\Sigma,O)$ be a scenario  and $f:\Gamma\to \Sigma$ be a bundle scenario.
As in the simplicial case, we define the functors $\Sect$ and $\bSect$ on $\catScen$ and $\catbScen$, respectively:
\begin{enumerate}
\item We define 
$$\Sect: \catScen \to \catSet$$
by sending a scenario $(\Sigma,O)$ to the set $\mathcal{E}_O(\Sigma_0)$. There is a natural transformation 
$$
\delta: \Sect \to \Emp
$$
whose component at $(\Sigma,O)$ is given by 
%the set $\Sect(\Sigma,O)=\mathcal{E}_O(\Sigma_0)$ and 
the map $\delta_{(\Sigma,O)}:\Sect(\Sigma,O) \to \Emp(\Sigma,O)$ which sends a section $s\in \mathcal{E}_O(\Sigma_0)$ to the empirical model 
$\{\delta^{s|_{\sigma}}\}_{\sigma \in \Sigma}$.

\item We define 
$$\bSect: \catScen \to \catSet$$
by sending a bundle scenario $f:\Gamma\to \Sigma$ to the set of sections of $f$, i.e., simplicial complex maps $s:\Sigma\to \Gamma$ such that $f\circ s = \idy_\Sigma$.
%$\mathcal{E}_O(\Sigma_0)$. 
There is a natural transformation 
$$
\delta: \bSect \to \bEmp
$$
whose component at $f$ is given by 
%the set $\Sect(\Sigma,O)=\mathcal{E}_O(\Sigma_0)$ and 
the map $\delta_{f}:\bSect(f) \to \bEmp(f)$ which sends a section $s\in \bSect(f)$ to the empirical model $\{\delta^{s(\sigma)}\}_{\sigma \in \Sigma}$.
%We define $\bSect(f)$ to be the set of sections of $f$, i.e., simplicial complex maps $s:\Sigma\to \Gamma$ such that $f\circ s = \idy_\Sigma$, and the map $\delta_{f}:\bSect(f) \to \bEmp(f)$ by sending a section $s\in \bSect(f)$ to the empirical model 
%$\{\delta^{s(\sigma)}\}_{\sigma \in \Sigma}$ on the bundle scenario $f$.
\end{enumerate}
%For $(\Sigma,O) \in \catScen$, we define $\Sect(\Sigma,O)$ to be the set $\mathcal{E}_O(\Sigma_0)$, and $\delta_{(\Sigma,O)}:\Sect(\Sigma,O) \to \Emp(\Sigma,O)$ to be the map that send $s\in \mathcal{E}_O(\Sigma_0)$ to 
%$\{\delta^{s|_{\sigma}}\}_{\sigma \in \Sigma}$.
%\end{defn}
%
%\begin{defn}
%For $f \in \catbScen$, we define $\bSect(f)$ to be the set of the sections of $f$, and $\delta_{f}:\bSect(f) \to \bEmp(f)$ to be the map that send $s\in \bSect(f)$ to 
%$\{\delta^{s(\sigma)}\}_{\sigma \in \Sigma}$.
%\end{defn}
%
%As in the case of $\sSect$ we obtain functor $\Sect: \catScen \to \catSet$ and $\bSect: \catbScen \to \catSet$. We also have natural transformations $\delta: \Sect \to \Emp$ and $\delta: \bSect \to \bEmp$ defined by $\delta_{(\Sigma,O)}$ and $\delta_f$ at a given object of the respective categories.
{
Moreover, we can define natural transformations 
$$
\begin{aligned}
\Theta:D_R\circ \Sect &\to \Emp\\
\Theta:D_R\circ \bSect &\to \bEmp
\end{aligned}
$$
%$\Theta:\Sect \to \Emp$ and $\Theta: \bSect \to \bEmp$ 
whose components $\Theta_{(\Sigma,O)}$
and
$\Theta_f$ are given by the transpose of $\delta_{(\Sigma,O)}$ and $\delta_f$  with respect to the adjunction $\catSet \dashv \catConv_R$, respectively. 
}

\begin{defn}\label{def:Contextbbbb}
Let $(\Sigma,O)$ be a scenario and $f:\Gamma\to \Sigma$ be a bundle scenario.
\begin{enumerate}
\item An empirical model $e \in \Emp(\Sigma,O)$ on the scenario $(\Sigma,O)$ is called {\it non-contextual} if it lies in the image of $\Theta_{(\Sigma,O)}$. 
\item An empirical model $p \in \bEmp(f)$ on the bundle scenario $f$ is called {\it non-contextual} if it lies in the image of $\Theta_f$.
\end{enumerate}
Otherwise, the empirical model is called {\it contextual} in both cases.
\end{defn}
Note that  part (1) of Definition \ref{def:Contextbbbb} for contextuality agrees with the original definition given in \cite{abramsky2011sheaf}. {Our main result of Section \ref{sec:convexity-and-contextuality} is the compatibility of the notion of contextuality for empirical models and distributions on the three kinds of scenarios.
} 

\begin{thm}\label{thm:contextuality}
Let $(\Sigma,O)$ be a scenario and $f:\Gamma\to \Sigma$ be a bundle scenario.
\begin{enumerate}
\item An empirical model  $e \in \Emp(\Sigma,O)$ is contextual if and only if 
 $\eta_{(\Sigma,O)}(e) \in \bEmp(f_{(\Sigma,O)})$ 
 is contextual.
\item  An empirical model $p \in \bEmp(f)$ is contextual if and only if 
 $\zeta_{f}(p)\in \sDist(Nf)$ 
 is contextual.
\end{enumerate}
\end{thm}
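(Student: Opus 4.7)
The plan is to show that each of the natural isomorphisms $\eta_{(\Sigma,O)}$ and $\zeta_f$ matches up the images of the comparison maps $\Theta$, thereby preserving and reflecting non-contextuality. Since $\eta_{(\Sigma,O)}$ and $\zeta_f$ are bijections (Corollary \ref{EmpConv}), this will settle both parts of the theorem simultaneously.

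First, I would establish compatible bijections at the level of sections. For part (1), define $\phi_{(\Sigma,O)} : \Sect(\Sigma,O) \to \bSect(f_{(\Sigma,O)})$ by $\phi_{(\Sigma,O)}(s)(\sigma) = (\sigma, s|_\sigma)$; the inverse sends a simplicial complex section $t : \Sigma \to \eE_O\Sigma$ to the function $x \mapsto o$ determined by $t(x) = (x,o)$. For part (2), define $\psi_f : \bSect(f) \to \sSect(Nf)$ by $\psi_f(s) = Ns$. Injectivity of $\psi_f$ is immediate because a simplicial complex map is determined by its action on vertices. Surjectivity is the main technical point: given a simplicial set section $u : N\Sigma \to N\Gamma$ of $Nf$, one must recover a simplicial complex section $s$ of $f$ with $Ns = u$. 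Restricting $u$ to vertices of the nerves furnishes a set-theoretic section of $f$, and the combination of face-map compatibility of $u$ with the discrete-over-vertices property of $f$ (Remark \ref{Improppp}) forces $u$ to coincide with $Ns$ on all of $N\Sigma$. This step is in the spirit of the argument establishing full faithfulness of $N$ in Proposition \ref{pro:fully-faithful-simplicial}.

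A short direct calculation then shows that these bijections intertwine $\delta$ with the natural isomorphisms, namely $\eta_{(\Sigma,O)}(\delta^s) = \delta^{\phi_{(\Sigma,O)}(s)}$ for $s \in \Sect(\Sigma,O)$, and $\zeta_f(\delta^t) = \delta^{\psi_f(t)}$ for $t \in \bSect(f)$. The first identity is immediate from the formula defining $\eta_{(\Sigma,O)}$ in Proposition \ref{pro:natural-iso-bundle}. The second uses Definition \ref{def:Np}: on one hand $(N\delta^t)_n(\sigma_1,\dots,\sigma_n)(\gamma_1,\dots,\gamma_n)$ equals $1$ iff $\cup_i \gamma_i = t(\cup_i\sigma_i)$; on the other $\delta^{Nt}$ assigns $1$ iff $\gamma_i = t(\sigma_i)$ for every $i$. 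These conditions coincide because $\cup_i t(\sigma_i) = t(\cup_i\sigma_i)$ for a simplicial complex map $t$, and conversely the discrete-over-vertices property of $f$ forces the unique subsimplex of $t(\cup_i\sigma_i)$ mapping to $\sigma_i$ under $f$ to be $t(\sigma_i)$ (Remark \ref{Improppp}).

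Because $\Theta$ is by construction the $R$-convex extension of $\delta$ along the free-convex-set adjunction, these identities yield commutative squares
\[
\begin{tikzcd}[column sep=large]
D_R(\Sect(\Sigma,O)) \arrow[r,"D_R(\phi_{(\Sigma,O)})"] \arrow[d,"\Theta_{(\Sigma,O)}"'] & D_R(\bSect(f_{(\Sigma,O)})) \arrow[d,"\Theta_{f_{(\Sigma,O)}}"] \\
\Emp(\Sigma,O) \arrow[r,"\eta_{(\Sigma,O)}"'] & \bEmp(f_{(\Sigma,O)})
\end{tikzcd}
\qquad
\begin{tikzcd}[column sep=large]
D_R(\bSect(f)) \arrow[r,"D_R(\psi_f)"] \arrow[d,"\Theta_f"'] & D_R(\sSect(Nf)) \arrow[d,"\Theta_{Nf}"] \\
\bEmp(f) \arrow[r,"\zeta_f"'] & \sDist(Nf)
\end{tikzcd}
\]
in which both horizontal arrows are bijections. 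Consequently $\eta_{(\Sigma,O)}$ carries $\Image(\Theta_{(\Sigma,O)})$ bijectively onto $\Image(\Theta_{f_{(\Sigma,O)}})$, and $\zeta_f$ carries $\Image(\Theta_f)$ bijectively onto $\Image(\Theta_{Nf})$. These are precisely statements (1) and (2). The main obstacle is the surjectivity of $\psi_f$, which requires careful exploitation of the structure of nerves of bundle scenarios.
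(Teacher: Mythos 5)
Your proposal is correct and follows essentially the same route as the paper: both establish a bijection on sections (for part (2), $s\mapsto Ns$, with surjectivity extracted from the discrete-over-vertices property and Proposition \ref{pro:fNtoN}), check that it intertwines $\delta$ with the natural isomorphism, and then pass to the transposed square with $\Theta$ to match up the images. The only difference is cosmetic: you spell out part (1) and the computation $\zeta_f(\delta^t)=\delta^{Nt}$ explicitly, where the paper treats part (1) as "similar" and asserts the commutativity of the corresponding diagram more briefly.
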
 
\begin{proof}
We will prove part (2). The proof of part (1) is similar.  Sending $s \in \bSect(f)$ to $Ns \in \sSect(Nf)$ gives a function $\bSect(f)\to \sSect(Nf)$ that makes the following diagram commute:
\begin{equation}\label{Diag:Deltaaa}
\begin{tikzcd}[column sep=huge,row sep=large]
\bSect(f) \arrow[r,"\delta_f"] \arrow[d,""'] & \bEmp(f) 
\arrow[d,"\cong"',"\zeta_{f}"] \\
 \sSect(Nf) \arrow[r]  
 \arrow[r,"\delta_{Nf}"]& \sDist(Nf)
\end{tikzcd}
\end{equation}
We know that $\delta_{Ng}$ is injective for any morphism $g$ of $\catbScen$. Using this fact, the naturality of $\zeta$ and both of horizontal $\delta$ maps, we see that the left vertical map is natural.  
In addition, it's clear that this map is injective. 
For a section $\tilde{s} \in \sSect(Nf)$ and $\sigma\in \Sigma$ we have $f(\tilde{s}_1(\sigma))=(Nf)_1(\tilde{s}_1(\sigma))=\sigma$. Since  $f$ is a discrete over 
vertices, we obtain that $\tilde{s}_1(\sigma)$ is a vertex in $\Gamma$. 
Thus $\tilde{s}_1$ is a simplicial complex map. More precisely, it belongs to $\bSect(f)$, and by part (1) of Proposition \ref{pro:fNtoN} we obtain that $N\tilde{s}_1=\tilde{s}$. Therefore  
 Diagram (\ref{Diag:Deltaaa})
induces the following commutative diagram:
\begin{equation}
\begin{tikzcd}[column sep=huge,row sep=large]
D_R(\bSect(f)) \arrow[r,"\Theta_f"] \arrow[d,"\cong"'] & \bEmp(f) 
\arrow[d,"\cong"',"\zeta_{f}"] \\
 D_R(\sSect(Nf)) \arrow[r]  
 \arrow[r,"\Theta_{Nf}"]& \sDist(Nf)
\end{tikzcd}
\end{equation}
\end{proof}

%\section{{Conclusion}}

%\paragraph{{Acknowledgments.}}

\bibliography{bib}

\begin{thebibliography}{10}

\bibitem{abramsky2011sheaf}
S.~Abramsky and A.~Brandenburger, ``The sheaf-theoretic structure of
  non-locality and contextuality,'' {\em New Journal of Physics}, vol.~13,
  no.~11, p.~113036, 2011.

\bibitem{okay2022simplicial}
C.~Okay, A.~Kharoof, and S.~Ipek, ``Simplicial quantum contextuality,'' {\em
  arXiv preprint arXiv:2204.06648}, 2022.

\bibitem{karvonen2018categories}
M.~Karvonen, ``Categories of empirical models,'' in {\em 15th International
  Conference on Quantum Physics and Logic (QPL 2018)} (P.~Selinger and
  G.~Chiribella, eds.), vol.~287 of {\em Electronic Proceedings in Theoretical
  Computer Science}, pp.~239--252, 2019.

\bibitem{barbosa2023closing}
R.~S. Barbosa, M.~Karvonen, and S.~Mansfield, ``Closing {B}ell: Boxing black
  box simulations in the resource theory of contextuality,'' in {\em Samson
  Abramsky on Logic and Structure in Computer Science and Beyond}
  (A.~Palmigiano and M.~Sadrzadeh, eds.), vol.~25 of {\em Outstanding
  Contributions to Logic}, Springer, 2023.

\bibitem{abramsky2015contextuality}
S.~Abramsky, R.~S. Barbosa, K.~Kishida, R.~Lal, and S.~Mansfield,
  ``Contextuality, cohomology and paradox,'' in {\em 24th EACSL Annual
  Conference on Computer Science Logic (CSL 2015)} (S.~Kreutzer, ed.), vol.~41
  of {\em Leibniz International Proceedings in Informatics (LIPIcs)},
  pp.~211--228, Schloss Dagstuhl--Leibniz-Zentrum fuer Informatik, 2015.

\bibitem{beer2018contextuality}
K.~Beer and T.~J. Osborne, ``Contextuality and bundle diagrams,'' {\em Physical
  Review A}, vol.~98, no.~5, p.~052124, 2018.

\bibitem{terra2019measures}
M.~Terra~Cunha, ``On measures and measurements: a fibre bundle approach to
  contextuality,'' {\em Philosophical Transactions of the Royal Society A},
  vol.~377, no.~2157, p.~20190146, 2019.

\bibitem{abramsky2019comonadic}
S.~Abramsky, R.~S. Barbosa, M.~Karvonen, and S.~Mansfield, ``A comonadic view
  of simulation and quantum resources,'' in {\em 34th Annual ACM/IEEE Symposium
  on Logic in Computer Science (LiCS 2019)}, pp.~1--12, IEEE, 2019.

\bibitem{karvonen2021neither}
M.~Karvonen, ``Neither contextuality nor nonlocality admits catalysts,'' {\em
  Physical Review Letters}, vol.~127, p.~160402, 2021.

\bibitem{kharoof2022simplicial}
A.~Kharoof and C.~Okay, ``Simplicial distributions, convex categories and
  contextuality,'' {\em arXiv preprint arXiv:2211.00571}, 2022.

\bibitem{okay2022mermin}
C.~Okay, H.~Y. Chung, and S.~Ipek, ``Mermin polytopes in quantum computation
  and foundations,'' {\em arXiv preprint arXiv:2210.10186}, 2022.

\bibitem{kharoof2023topological}
A.~Kharoof, S.~Ipek, and C.~Okay, ``Topological methods for studying
  contextuality: {$N$}-cycle scenarios and beyond,'' {\em arXiv preprint
  arXiv:2306.01459}, 2023.

\bibitem{raussendorf2016cohomological}
R.~Raussendorf, ``Cohomological framework for contextual quantum
  computations,'' {\em Quantum Information and Computation}, vol.~19,
  no.~13{\&}14, pp.~1141--1170, 2019.

\bibitem{raussendorf2023putting}
R.~S. Barbosa, M.~Karvonen, and S.~Mansfield, ``Putting paradoxes to work:
  contextuality in measurement-based quantum computation,'' in {\em Samson
  Abramsky on Logic and Structure in Computer Science and Beyond}
  (A.~Palmigiano and M.~Sadrzadeh, eds.), vol.~25 of {\em Outstanding
  Contributions to Logic}, Springer, 2023.

\bibitem{riehl2017category}
E.~Riehl, {\em Category theory in context}.
\newblock Courier Dover Publications, 2017.

\bibitem{jacobs2010convexity}
B.~Jacobs, ``Convexity, duality and effects,'' in {\em IFIP International
  Conference on Theoretical Computer Science}, pp.~1--19, Springer, 2010.

\bibitem{mac2013categories}
S.~Mac~Lane, {\em Categories for the working mathematician}, vol.~5.
\newblock Springer Science \& Business Media, 2013.

\bibitem{goerss2009simplicial}
P.~G. Goerss and J.~F. Jardine, {\em Simplicial homotopy theory}.
\newblock Springer Science \& Business Media, 2009.

\end{thebibliography}
\bibliographystyle{ieeetr}

\appendix

%\section{Simplicial complexes, simplicial sets, and convex categories}

\section{Simplicial complexes}\label{sec:simplicial-complexes}

{
Simplicial complexes are one type of combinatorial model we use to represent spaces in this paper. This section introduces the simplicial complex that represents the standard topological $n$-simplex. These simplices can be related by maps of simplicial complexes that come from a certain category, called the simplex category. Using these morphisms, we provide alternative characterizations of local surjectivity and being discrete over vertices.
}
 
Let $\Delta:\catSet \to \catsComp$ denote the functor that sends a set $U$ to the simplicial complex $\Delta^U$ given by the power set of $U$, i.e., $(\Delta^U)_0=U$ and simplicial are all nonempty subsets of $U$. 
A set map $g:U'\to U$ induces a map of simplicial complexes, which will be denoted by $g:\Delta^{U'}\to \Delta^U$, defined by $g(\sigma')=\set{g(x'):\, x'\in \sigma'}$ for $\sigma'\subset U'$. {Next we introduce an important subcategory known as the simplex category and consider the restriction of the functor on this category.}
%The finite sets $[n]=\set{0,1,\cdots,n}$ will play an important role. First we assemble them into a category.
The {\it simplex category} $\catDelta$ {is defined as follows}  \cite{goerss2009simplicial}:
\begin{itemize}
\item Objects are given by $[n]=\set{0,1,\cdots,n}$ where $n\geq 0$.
\item A morphism $\theta:[m]\to [n]$ is an order preserving function, that is,
$$
\theta(i)\leq \theta(j),\;\; \forall i\leq j \in [m].
$$ 
\end{itemize}
There are two kinds of distinguished maps: 
$$
d^i:[n-1] \to [n] \;\;\;\text{ and }\;\;\; s^j:[n+1] \to [n],
$$
where the face maps $d^i$  skips $i$ in the image, and the degeneracy maps $s^j$   has a double preimage at $j$. An arbitrary morphism $\theta$ can be written as a composite of face and degeneracy maps.

We can extend this category to the augmented simplex category $\catDelta_+$ by adding the initial object $[-1]=\emptyset$.
We will write $\Delta^n$ for the simplicial complex $\Delta^{[n]}$.
The assignment $[n]\mapsto \Delta^n$ gives a functor
$$
\Delta: \catDelta_+ \to s\catComp.
$$ 

\begin{defn}\label{def:right-lifting}
{\rm
Let $\catC$ be a category. 
We say that $f$ has the {\it right lifting property} with respect to $g$ if for every commutative diagram in $\catC$ consisting of the solid arrows
$$
\begin{tikzcd}[column sep=huge,row sep=large]
A \arrow[r] \arrow[d,"g"'] & X \arrow[d,"f"] \\
B \arrow[r] \arrow[dashed,ru] & Y
\end{tikzcd}
$$
the dashed arrow exists making both triangles commute.
}
\end{defn}

\begin{pro}\label{pro:equivalent-LS} 
For a map $f: \Gamma \to \Sigma$ in $s\catComp$ of simplicial complexes the following conditions are equivalent: 
\begin{enumerate}
\item $f$ is locally surjective (Definition \ref{def:S-LS-DV-BS}).
\item $f$ has the right lifting property with respect to every injective map 
$\Delta^m \hookrightarrow \Delta^n$ where $m\geq 0$. 
\item $f$ has the right lifting property with respect to every $d^n: \Delta^{n-1} \hookrightarrow \Delta^n$ where $n\geq 1$. 
%\item For every $\gamma \in \Gamma$, the map $f|_{\gamma}:\St(\gamma) \to \St(f(\gamma))$ is surjective.
\end{enumerate}
\end{pro}
\begin{proof}
It is obvious that  (2) implies (3). 
To prove that (3) implies $(1)${, for} $\gamma=\{v_0,v_1,\cdots,v_m\} \in \Gamma$ and $\tau=\{f(v_0),f(v_1),\cdots,f(v_m),u_{m+1},\cdots,u_n\} \in \St(f(\gamma))$, we define 
$g: \Delta^m \to \Gamma$ by $g(i)=v_i$ for every $0\leq i \leq m$, and $k: \Delta^n \to \Sigma$ by 
$$
k(i)=
\begin{cases}
f(v_i) & \text{if}~ 0 \leq i \leq m \\
u_i & \text{if}~ i >m. 
\end{cases}
$$
Let $d: \Delta^m \to \Delta^n$ be the composition $ d^n\circ \cdots\circ d^{m+2}\circ d^{m+1}$ (which sends $i \in {(\Delta^m)_0}$ to $i \in {(\Delta^n)_0}$). Then the following diagram commutes:
 \begin{equation}\label{DDiag1}
\begin{tikzcd}[column sep=huge,row sep=large]
\Delta^m  \arrow[r,"g"]
 \arrow[d,hook, "d"'] & \Gamma 
 \arrow[d,"f"] \\
\Delta^n \arrow[r,"k"] & \Sigma 
\end{tikzcd}
\end{equation}
By applying part (3)  $n-m$ times we obtain a map $h:\Delta^n \to \Gamma$ that satisfies $h \circ d =g$ and $f \circ h=k$. Thus $
\gamma=\im(g) \subset 
\im(h)$, which means that $\im(h) \in \St(\gamma)$. Also we obtain that $f(\im (h))= \im (k) = \tau$.
 
Finally, we prove that  (1) implies (2). Consider the following diagram  
\begin{equation}\label{DDiag2}
\begin{tikzcd}[column sep=huge,row sep=large]
\Delta^m  \arrow[r,"g"]
 \arrow[d,hook,"l"'] & \Gamma 
 \arrow[d,"f"] \\
\Delta^n \arrow[r,"k"] & \Sigma 
\end{tikzcd}
\end{equation} 
Let {$\gamma=\im (g)$ and $\tau=\im (k)$}. By {the commutativity of} Diagram (\ref{DDiag2}) we {have} that $f(\gamma)\subseteq \tau$, hence 
$\tau \in \St(f(\gamma))$.  By condition (1) {there exists} $\sigma \in \St(\gamma)$ such that $f(\sigma)=\tau$.
We will denote by $i_{m+1},\cdots, i_n$ the vertices in $\Delta^n$ that do not belong to $\im (l)$. 
Since $f(\sigma)=\tau=\im(k)$, there exists $z_{m+1},\cdots,z_n \in \sigma$ such that $f(z_j)=k(i_j)$ for every $m+1 \leq j \leq n$. We define $h: \Delta^n \to \Gamma$ by 
$$
h(r)=
\begin{cases}
g(i)   &  r=l(i) \\
z_j    &  r=i_j.
\end{cases}
$$
{Then} $h$ is well-defined since 
$\Image(h)=\{g(0),\cdots,g(m),z_{m+1},\cdots, z_n\}=\gamma \cup \{z_{m+1},\cdots, z_n\} \subseteq \sigma$, thus $\Image (h) \in \Gamma$. In addition, by the definition of $h$, we have 
$h(l(i))=g(i)$ for   $0 \leq i \leq m$, and 
$$
f(h(r))=
\begin{cases}
f(g(i))=k(l(i))=k(r)   &  r=l(i) \\
f(z_j)=k(i_j)=k(r)    &  r=i_j.
\end{cases}
$$
Therefore $h$ is a lifting for Diagram (\ref{DDiag2}).
\end{proof}

\begin{pro}\label{pro:equivalent-DV}
For a map $f: \Gamma \to \Sigma$ in $s\catComp$ of simplicial complexes the following conditions are equivalent: 
\begin{enumerate}
\item $f$ is discrete over vertices (Definition \ref{def:S-LS-DV-BS}).
\item $f$ has the right lifting property with respect to every surjective map 
$\Delta^n \twoheadrightarrow \Delta^m$ where $m\geq 0$. 
\item $f$ has the right lifting property with respect to  $s^0:\Delta^{1} \twoheadrightarrow \Delta^{0}$. 
\end{enumerate}
\end{pro}
\begin{proof}
{It is obvious that} (2) implies (3).
% is obvious. 
To prove that (3) implies (1){, for}  $x,y \in \Gamma_0$ such that $f(x)=f(y)$ suppose that $\{x,y\}\in \Gamma$. We define  $g: \Delta^1 \to \Gamma$ by $g(0)=x$, $g(1)=y$, and $k:\Delta^0 \to \Sigma$ by $k(0)=f(x)$.
{We} obtain the following commutative diagram
\begin{equation}\label{dia:ghf}
\begin{tikzcd}[column sep=huge,row sep=large]
\Delta^1  \arrow[r,"g"]
 \arrow[d,twoheadrightarrow,"s^0"'] & \Gamma 
 \arrow[d,"f"] \\
\Delta^0 \arrow[r,"k"] & \Sigma 
\end{tikzcd}
\end{equation}
By condition (3) there exists $h:\Delta^0 \to \Gamma$  that gives a lifting for Diagram (\ref{dia:ghf}). In particular, 
$$
x=g(0)=h(s^0(0))=
h(s^0(1))=g(1)=y.
$$
Finally, we prove that  $(1)$ implies $(2)
$. 
We have the following commutative diagram  
\begin{equation}\label{DDiag3}
\begin{tikzcd}[column sep=huge,row sep=large]
\Delta^n  \arrow[r,"g"]
 \arrow[d,twoheadrightarrow,"s"'] & \Gamma 
 \arrow[d,"f"] \\
\Delta^m \arrow[r,"k"] & \Sigma 
\end{tikzcd}
\end{equation}
If $s(i_1)=s(i_2)$ then by Diagram (\ref{DDiag3}) we have 
$$
f(g(i_1))=k(s(i_1))=k(s(i_2))
=f(g(i_2)).
$$
On the other hand, $\{g(i_1),g(i_2)\} \subset \im (g)$ and thus $\{g(i_1),g(i_2)\} \in \Gamma$. By condition (1), we conclude that $g(i_1)=g(i_2)$. We define $h: \Delta^m \to \Gamma$ by 
$h(i)=g(s^{-1}(i))$. 
As a map between the vertices, $h$ is well-defined since $s$ is surjective and we proved that the preimage of $i$ under $s$ maps to a single point under $g$. 
In addition, the map $h$ is a {morphism of} $\catsComp$ since $\im (h) = \im (g) \in \Gamma$.
It remains to prove that $h$ is a lifting for the Diagram (\ref{DDiag3}).
Given $i \in (\Delta^n)_0 $, by the definition of $h$, we have 
$h(s(i))= g(i)$.
Now given $i \in (\Delta^m)_0$, we have 
$$
f(h(i))=f(g(s^{-1}(i)))=
k(s(s^{-1}(i)))=k(i).
$$
\end{proof}

\Cor{\label{cor:equivalent-bundle}
A simplicial complex map $f:\Gamma\to \Sigma$ is a bundle scenario (Definition \ref{def:S-LS-DV-BS}) if and only if $f$ has the right lifting property with respect to $\theta:\Delta^m \to \Delta^n$ for all morphisms $\theta$ of the augmented simplex category $\catDelta_+$.
}

\subsection{Pull-backs {of} bundle scenarios}\label{subsec: Pullback}

{The key observation in this section is that pull-backs along bundle scenarios behave particularly nice and they can be described easily. In addition, we show that pull-back of a bundle scenario is also a bundle scenario.
}

Let $f:\Gamma\to \Sigma$ be a bundle scenario, and let $\pi:\Sigma\to \Sigma'$ be a simplicial complex map.
The pull-back diagram associated to $f$ and $\pi$  consists of a commutative diagram of simplicial complex maps
\begin{equation}\label{dia:pull-back}
\begin{tikzcd}[column sep=huge,row sep=large]
\Gamma  
 \arrow[d,"f"'] & \pi^{\ast}(\Gamma) 
 \arrow[d,"\tilde f"] \arrow[l,"\tilde \pi"'] \\
\Sigma  & \Sigma' \arrow[l,"\pi"]
\end{tikzcd}
\end{equation}
where $\pi^*(\Gamma)$ is the {\it pull-back}: A simplicial complex with   vertex set consisting of pairs $(x,y)\in \Gamma_0 \times  \Sigma'_0$ such that $f(x)=\pi(y)$ 
and simplex set
$$
\pi^\ast\Gamma = \set{ \set{(x_1,y_1),\cdots,(x_k,y_k)} \subset (\pi^\ast\Gamma)_0:\, \set{x_1,\cdots,x_k}\in \Gamma,\; \set{y_1,\cdots,y_k}\in \Sigma' }.
$$    
The maps $\tilde f$ and $\tilde \pi$ are given by projection.

\Pro{\label{pro:pull-back}
Diagram (\ref{dia:pull-back}) is a pull-back in the category of simplicial complexes.}
\Proof{
Let $\gamma \in \Gamma$ and $\sigma' \in \Sigma'$ be such that $f(\gamma)=\pi(\sigma')=\{z_1,\cdots,z_n\}$ where $z_i \neq z_j$ for $i \neq j$. Since $f$ is discrete over vertices there exists distinct vertices $x_1,\cdots,x_n$ in $\Gamma$ such that 
$\gamma=\{x_1,\cdots,x_n\}$ and $f(x_i)=z_i$. On the other hand, for  
$1 \leq i \leq n$, we {write} $\{y_{i1},\cdots,y_{im_i}\}$ {for} the simplex $\pi^{-1}(z_i) \cap \sigma'$. Then we have $\sigma'=\cup_{i=1}^n\{y_{i1},\cdots,y_{im_i}\}$. We conclude that 
$\tau=\cup_{i=1}^n\{(x_i,y_{i1}),\cdots,(x_i,y_{im_i})\}$ is the {unique} simplex in $\pi^{\ast}(\Gamma)$ such that $\tilde{f}(\tau)=\sigma'$ and $\tilde{\pi}(\tau)=\gamma$. 
} 

\begin{pro}\label{pro:pull-back-bundle}
The map $\tilde{f}$ in Diagram (\ref{dia:pull-back}) is a bundle scenario. 
\end{pro}
\Proof{%\ak{[[I added a proof]]}
Let $\sigma'=\{y_1,\cdots,y_n\} \in \Sigma'$. Since $f$ is surjective, there exists 
$\gamma \in \Gamma$ such that $f(\gamma)=\pi(\sigma')$. 
For   $1 \leq i \leq n$, we choose $x_i \in \gamma$ such that $f(x_i)=\pi(y_i)$. Then 
$\{(x_1,y_1),\cdots,(x_n,y_n)\} \in \pi^{\ast}(\Gamma)$ and $\tilde{f}(\{(x_1,y_1),\cdots,(x_n,y_n)\})=\sigma'$. This shows that $\tilde{f}$ is surjective. To prove that  $\tilde{f}$ is locally surjective, let $\{(x_1,y_1),\cdots,(x_k,y_k)\} \in \pi^{\ast}(\Gamma)$ and 
$\sigma'=\{y_1,\cdots,y_k,y_{k+1},\cdots,y_n\} \in \St(\{y_1,\cdots,y_k\})$. Then $\pi(\sigma') \in \St(\pi(\{y_1,\cdots,y_k\}))$. Note that $f(\{x_1,\dots,x_k\})=\{y_1,\dots,y_k\}$. Since $f$ is locally surjective,  there exists $\gamma \in \St(\{x_1,\cdots,x_k\})$ such that 
$f(\gamma)=\pi(\sigma')$.  
Thus for every $k+1 \leq i\leq n$ there exists $x_i \in \gamma$ such that $f(x_i)=\pi(y_i)$. 
We conclude that $\{(x_1,y_1),\cdots,(x_n,y_n)\} \in \pi^{\ast}(\Gamma)$
and $\tilde{f}(\{(x_1,y_1),\cdots,(x_n,y_n)\})=\sigma'$.
To prove that $\tilde{f}$ is discrete over vertices,  consider $(x_1,y_1),(x_2,y_2) \in \pi^{\ast}(\Gamma)_0$ such that $\tilde{f}(x_1,y_1)=\tilde{f}(x_2,y_2)$, that is $y_1=y_2$. If $\{(x_1,y_1),(x_2,y_2)\} \in \pi^{\ast}(\Gamma)$
then $\{x_1,x_2\} \in \Gamma$. But $f(x_1)=\pi(y_1)
=\pi(y_2)=f(x_2)$. Thus we conclude that $x_1=x_2$ since $f$ is discrete over vertices. Therefore $(x_1,y_1)=(x_2,y_2)$.
}

\subsection{{Bundles of nerve complexes}}\label{sec:simp-rel-kl-mor}

{The nerve complex functor $\hat N:\catsComp \to \catsComp$ given in Definition \ref{def:hatN} is used in the description of morphisms of $\catbScen$. We show that this functor preserves bundle scenarios.
}

%Let $\hat N:\catsComp \to \catsComp$ denote the nerve complex functor (Definition \ref{def:hatN}).

\Pro{\label{pro:N-bundle-sce}
If $f:\Gamma\to \Sigma$ is a bundle scenario then $\hat N f:\hat N\Gamma\to \hat N \Sigma$ is also a bundle scenario. 
} 
\Proof{%\ak{[[I added the proof]]}
Consider $\{\sigma_1,\cdots,\sigma_n\} \in \hat N \Sigma$. We have $\cup_{i=1}^n \sigma_i \in \Sigma$. Surjectivity of $f$ implies that 
there exists $\gamma \in \Gamma$ such that $f(\gamma)=\cup_{i=1}^n\sigma_i$. 
Thus we have $\gamma_1,\cdots,\gamma_n \subseteq \gamma$ such that $f(\gamma_i)=\sigma_i$. 
This shows that $\{\gamma_1,\cdots,\gamma_n \}\in \hat N \Gamma$ and 
$\hat N f(\{\gamma_1,\cdots,\gamma_n \})=\{\sigma_1,\cdots,\sigma_n \}$. 
Now, we prove that $\hat N f$ is discrete over vertices. 
Consider distinct vertices $\gamma_1,\gamma_2 $  in $\hat N \Gamma$ such that 
$\hat N f(\gamma_1)=\hat N f(\gamma_2)$. 
This means that $f(\gamma_1)=f(\gamma_2)$. Since $f$ is discrete over vertices we get that $|\gamma_1|=|\gamma_2|$. 
On the other hand, $f(\gamma_1 \cup \gamma_2)=f(\gamma_1) \cup f(\gamma_2)=f(\gamma_1)$ and $|\gamma_1 \cup \gamma_2| > |\gamma_1|$. We conclude that $\gamma_1 \cup \gamma_2 \notin \Gamma$, hence $\{\gamma_1,\gamma_2\} \notin  \hat N \Gamma$.    
Finally, we prove that $\hat N f$ is locally surjective. Given $\{\gamma_1,\cdots,\gamma_n\} \in \hat N \Gamma$, we want to prove the surjectivity of the restriction of $f$ on $\St(\{\gamma_1,\cdots,\gamma_n\})$:
%
%\ak{[[the restriction on $\St(\{\gamma_1,\cdots,\gamma_n\})$]]}
$$
f|_{\{\gamma_1,\cdots,\gamma_n\}}:  \St(\{\gamma_1,\cdots,\gamma_n\}) \to \St(\{f(\gamma_1),\cdots,f(\gamma_n)\}).
$$
For $\{f(\gamma_1),\cdots,f(\gamma_n),\sigma_1,\cdots,\sigma_m\}\in \St(\{f(\gamma_1),\cdots,f(\gamma_n)\})$, the union 
$$\cup_{i=1}^{n}f(\gamma_i) \; \bigcup \; \cup_{i=1}^{m}\sigma_i  $$
belongs to $\Sigma$, which implies that the union
$$f(\cup_{i=1}^{n}\gamma_i) \bigcup \cup_{i=1}^{m}\sigma_i $$ 
belongs to $\St(f(\cup_{i=1}^{n}\gamma_i))$.
Since $f$ is locally surjective, there exists $\cup_{i=1}^{n}\gamma_i \subset \gamma \in \Gamma$ such that 
$f(\gamma)=f(\cup_{i=1}^{n}\gamma_i)\; \bigcup \; \cup_{i=1}^{m}\sigma_i$. 
In particular, there exists $\tau_1,\cdots,\tau_m \subset \gamma$ such that $f(\tau_i)=\sigma_i$. Thus $\{\gamma_1,\cdots,\gamma_n,\tau_1,\cdots,\tau_m\}\in \St(\{\gamma_1,\cdots,\gamma_n\})$ and 
$$
\hat N f(\{\gamma_1,\cdots,\gamma_n,\tau_1,\cdots,\tau_m\})=\{f(\gamma_1),\cdots,f(\gamma_n),\sigma_1,\cdots,\sigma_m\}.$$
}

\subsection{{Lemmas: Category of bundle scenarios}}
\label{sec:lem-category-bundle-scen} 

{In $\catbScen$ there are two types of morphisms. In this section we prove two results about type I morphisms. First one of these is used in the definition of composition of type I morphisms. The second result constitutes part of the identity morphism of an object in $\catbScen$. Finally we show that the composition rule in this category is associative.
}
 
\begin{lem}\label{lem:pull-hat-N}
For a type I morphism given in Diagram (\ref{dia:typeI}),
the composition of the following squares is a pull-back
%
%then the composition of the following squares:
\begin{equation}\label{DDiag6}
\begin{tikzcd}[column sep=huge,row sep=large]
\hat N \Gamma  
 \arrow[d,"\hat N f"'] & \arrow[l,"\mu_{\Gamma}"'] \hat N^2 \Gamma  
 \arrow[d,"{\hat N}^2 f"'] & \hat N\pi^{\ast}(\hat N \Gamma) 
 \arrow[d,"\hat N f^\pi"] \arrow[l,"\hat N l"']\\
\hat N\Sigma  & \arrow[l,"\mu_{\Sigma}"] 
\hat N^2 \Sigma  & \hat N \Sigma' \arrow[l,"\hat N \pi"]
\end{tikzcd}
\end{equation} 
\end{lem}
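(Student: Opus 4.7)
The plan is to directly verify that $\hat N \pi^\ast(\hat N \Gamma)$, equipped with the maps $\mu_\Gamma \circ \hat N l$ and $\hat N f^\pi$, satisfies the universal property of the pull-back of $\hat N f$ along $\mu_\Sigma \circ \hat N \pi$. First I would invoke Proposition~\ref{pro:N-bundle-sce} to confirm that $\hat N f$ is itself a bundle scenario. This enables the use of Proposition~\ref{pro:pull-back}, which gives an explicit description of $P := (\mu_\Sigma \circ \hat N \pi)^\ast(\hat N \Gamma)$: vertices of $P$ are pairs $(\gamma,\sigma')$ with $\gamma \in \Gamma$, $\sigma' \in \Sigma'$ and $f(\gamma) = \overline{\pi}(\sigma')$, while simplices are sets $\{(\gamma_i,\sigma_i')\}_i$ whose projections give simplices $\{\gamma_i\}_i \in \hat N \Gamma$ and $\{\sigma_i'\}_i \in \hat N \Sigma'$. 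I use here that the composite $\mu_\Sigma \circ \hat N \pi$ evaluated at a vertex $\sigma' \in (\hat N \Sigma')_0 = \Sigma'$ produces exactly $\overline{\pi}(\sigma') = \bigcup_{x' \in \sigma'} \pi(x')$.

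Next, I would exhibit an explicit simplicial isomorphism $\Phi : \hat N \pi^\ast(\hat N \Gamma) \to P$. A vertex of $\hat N \pi^\ast(\hat N \Gamma)$ is a simplex $v = \{(\gamma_j, x_j')\}_j$ of $\pi^\ast(\hat N \Gamma)$, and I would set $\Phi(v) = \bigl(\bigcup_j \gamma_j,\; \{x_j'\}_j\bigr)$. The two conditions $\bigcup_j \gamma_j \in \Gamma$ and $\{x_j'\}_j \in \Sigma'$, which come from $v$ being a simplex of $\pi^\ast(\hat N \Gamma)$ via Proposition~\ref{pro:pull-back}, ensure $\Phi(v)$ is a vertex of $P$, while the identity $f\bigl(\bigcup_j \gamma_j\bigr) = \bigcup_j \pi(x_j') = \overline{\pi}(\{x_j'\}_j)$ follows from the pull-back relation $f(\gamma_j) = \pi(x_j')$. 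Verifying that $\Phi$ is simplicial and that its composition with the two projections out of $P$ recovers $\mu_\Gamma \circ \hat N l$ and $\hat N f^\pi$ respectively is then a direct unwinding of definitions.

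The inverse $\Psi : P \to \hat N \pi^\ast(\hat N \Gamma)$ exploits the uniqueness afforded by Remark~\ref{Improppp}. For each $(\gamma,\sigma') \in P_0$ and each vertex $x' \in \sigma'$, the inclusion $\pi(x') \subseteq \overline{\pi}(\sigma') = f(\gamma)$ together with the discreteness over vertices of $f$ yields a unique simplex $\gamma_{x'} \subseteq \gamma$ with $f(\gamma_{x'}) = \pi(x')$. I would set $\Psi(\gamma,\sigma') = \{(\gamma_{x'},x') : x' \in \sigma'\}$, which is a simplex of $\pi^\ast(\hat N \Gamma)$ since $\bigcup_{x'} \gamma_{x'} = \gamma$ by a second application of uniqueness, and hence a vertex of $\hat N \pi^\ast(\hat N \Gamma)$.

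The step that I expect to require the most care is checking that $\Psi$ sends simplices of $P$ to simplices of $\hat N \pi^\ast(\hat N \Gamma)$: when two vertices $(\gamma_i,\sigma_i')$ and $(\gamma_j,\sigma_j')$ of such a simplex share an element $x' \in \sigma_i' \cap \sigma_j'$, one must show that the locally defined subsimplices $\gamma_{i,x'} \subseteq \gamma_i$ and $\gamma_{j,x'} \subseteq \gamma_j$ coincide, so that the pair $(\gamma_{x'},x')$ is unambiguously defined in the union. This is precisely where Remark~\ref{Improppp} applied to the global simplex $\bigcup_k \gamma_k \in \Gamma$ (which exists because $\{\gamma_k\}_k \in \hat N \Gamma$) forces both candidates to agree with the unique simplex in $\bigcup_k \gamma_k$ whose $f$-image is $\pi(x')$. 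Once this compatibility is settled, the routine verifications that $\Phi \circ \Psi = \mathrm{id}$ and $\Psi \circ \Phi = \mathrm{id}$ complete the identification of $\hat N \pi^\ast(\hat N \Gamma)$ with $P$, so the composite square is indeed a pull-back.
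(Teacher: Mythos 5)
Your proposal is correct and follows essentially the same route as the paper: both identify the composite square's pull-back, described explicitly via Propositions \ref{pro:N-bundle-sce} and \ref{pro:pull-back}, with $\hat N\pi^{\ast}(\hat N\Gamma)$ through an explicit isomorphism whose nontrivial direction is built from the uniqueness in Remark \ref{Improppp}. The only difference is presentational — you spell out both directions and the simpliciality check that the paper leaves as ``one can check.''
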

\Proof{ 
By  
Proposition \ref{pro:pull-back} and Proposition \ref{pro:N-bundle-sce} $(\mu_{\Sigma}\circ \hat N \pi)^\ast(\hat N \Gamma)$ is the pull-back of ${\mu}_{\Sigma}\circ \hat N\pi$ along $\hat N f$. 
Given a vertex $(\gamma,\sigma')$ in $(\mu_{\Sigma}\circ \hat N \pi)^\ast(\hat N \Gamma)$, we have $f(\gamma)=\overline{\pi}(\sigma')$. Let $\sigma'=\{y_{1},\cdots,y_n\}$. Then $f(\gamma)=\cup_{j=1}^{n}\pi(y_{j})$. 
By Remark \ref{Improppp} there is a unique simplex $\set{\gamma_1\cdots,\gamma_n} \in {\hat N\Gamma}$
% $\gamma_{1},\cdots, \gamma_{n}\subseteq \gamma$ 
such that 
$$
\cup_{j=1}^{n}\gamma_{j}=\gamma \,\, \text{and} \,\,f(\gamma_{j})=\pi(y_{j})
\,\, , \,\forall 1\leq j \leq n.
$$
{Let} $S(\gamma,\sigma')$ {denote}
the vertex 
$ 
\{(\gamma_{1},y_{1}),\cdots,(\gamma_{n},y_{n})\}$
in $\hat N(\pi^{\ast}(\hat N \Gamma))
$.
{Using this we can construct}
%This give us 
a simplicial complex map $S:(\mu_{\Sigma}\circ \hat N \pi)^\ast(\hat N \Gamma) \to \hat N(\pi^{\ast}(\hat N \Gamma))$.
% in $\catsComp$. 
In addition, we have
$$ 
\begin{aligned}
\hat N f^{\pi} \left(S (\gamma,\sigma') \right)
=(\hat N f^{\pi})(\{(\gamma_{1},y_{1}),\cdots,(\gamma_{n},y_{n})\})=\sigma'
\end{aligned}
$$
and
$$
\begin{aligned}
{\mu}_\Gamma & \circ (\hat N l) \circ S  \left(\gamma,\sigma'\right)\\
&={\mu}_\Gamma  \circ (\hat N l)(\{(\gamma_{1},y_{1}),\cdots,(\gamma_{n},y_{n})\})
\\
&={\mu}_\Gamma(\{\gamma_{1},\cdots,\gamma_{n}\})\\
&=\cup_{j=1}^{n}\gamma_{j}
=\gamma.
\end{aligned}
$$
Finally, one can check that the induced map from
$\hat N(\pi^{\ast}(\hat N \Gamma))$ to $(\mu_{\Sigma}\circ \hat N \pi)^\ast(\hat N \Gamma)$ is the inverse of $S$. 
Therefore the composition of the  squares in Diagram (\ref{DDiag6}) is a pull-back. 
}

\begin{lem}\label{DDDeltsqu}
Let $f:\Gamma \to \Sigma$ be a simplicial complex map that is discrete over vertices. Then the following diagram
\begin{equation}\label{DDiag9}
\begin{tikzcd}[column sep=huge,row sep=large]
\hat N\Gamma  
 \arrow[d,"\hat N f"'] & \Gamma 
 \arrow[d,"f"] \arrow[l,hook,"\delta_{\Gamma}"']\\
\hat N\Sigma  & \Sigma \arrow[l,hook, "\delta_{\Sigma}"]
\end{tikzcd}
\end{equation}
is a pullback square.
\end{lem}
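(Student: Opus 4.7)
The plan is to verify the universal property of the pullback directly. The square commutes by naturality of $\delta$: for any $\gamma \in \Gamma$, one has $\hat N f(\delta_\Gamma(\gamma)) = \hat N f(\{\gamma\}) = \{f(\gamma)\} = \delta_\Sigma(f(\gamma))$. So the content is to check that given a test simplicial complex $X$ with maps $g:X\to \hat N\Gamma$ and $h:X\to \Sigma$ satisfying $\hat N f\circ g = \delta_\Sigma \circ h$, there is a unique $k:X\to \Gamma$ with $\delta_\Gamma\circ k = g$ and $f\circ k = h$.

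The key step, and the one place the hypothesis on $f$ is used, is the observation that for every vertex $v\in X_0$ the simplex $g(v)\in (\hat N\Gamma)_0 = \Gamma$ must in fact be a $0$-simplex. Indeed, commutativity at $v$ forces $f(g(v)) = \{h(v)\}$ as simplices of $\Sigma$; since $f$ is discrete over vertices, its restriction to the simplex $g(v)$ is injective on vertices, whence $|g(v)| = |f(g(v))| = 1$. Thus $g(v) = \{k(v)\}$ for a unique vertex $k(v)\in \Gamma_0$, and moreover $f(k(v)) = h(v)$.

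It then remains to check that the assignment $v\mapsto k(v)$ extends to a simplicial complex map. For any simplex $\{v_0,\ldots,v_n\}$ of $X$, the image $g(\{v_0,\ldots,v_n\}) = \{\{k(v_0)\},\ldots,\{k(v_n)\}\}$ is a simplex of $\hat N\Gamma$, which by definition of $\hat N$ means $\cup_i \{k(v_i)\} = \{k(v_0),\ldots,k(v_n)\}\in \Gamma$, so $k$ is a valid simplicial complex map. By construction $\delta_\Gamma\circ k = g$ and $f\circ k = h$, and uniqueness is automatic from the fact that $k$ is pinned down on vertices by $\delta_\Gamma(k(v)) = g(v)$. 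The singleton observation is the only real content; the rest is routine unpacking of the definitions of $\hat N$ and $\delta$.
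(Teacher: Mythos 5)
Your proof is correct, and it turns on exactly the same key observation as the paper's: discreteness over vertices forces any simplex of $\Gamma$ whose $f$-image is a single vertex to be a single vertex itself. Where the two arguments differ is in how the pullback property is certified. The paper checks that over every compatible pair consisting of a simplex $\{\gamma_1,\dots,\gamma_n\}\in\hat N\Gamma$ and a simplex $\sigma\in\Sigma$ with $\{f(\gamma_1),\dots,f(\gamma_n)\}=\delta_\Sigma(\sigma)$ there is a unique simplex of $\Gamma$ (namely $\cup_i\gamma_i$) lying over both, using the same singleton argument to conclude that each $\gamma_i$ is a vertex; this implicitly relies on the simplex-level description of pullbacks of simplicial complexes that the paper uses throughout (cf.\ Proposition \ref{pro:pull-back}). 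You instead verify the universal property directly against an arbitrary test complex $X$, constructing the mediating map $k$ on vertices and checking it is simplicial. Your route is slightly longer but more self-contained, since it does not presuppose any explicit model of the pullback; the paper's version is shorter because that model is already part of its standing toolkit. Both are complete.
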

\Proof{Given  simplices $\{\gamma_1,\cdots,\gamma_n\} \in \hat N \Gamma$ and $\sigma \in \Sigma$ such that $\{f(\gamma_1),\cdots,f(\gamma_n)\}=\delta_{\Sigma}(\sigma)$,  the images $f(\gamma_1),\cdots,f(\gamma_n)$ are the vertices of $\sigma$. 
The map $f$ is discrete over vertices, thus every $\gamma_i$ is a vertex in $\Gamma$. We conclude that $\gamma = \cup_{i=1}^n \gamma_i$ is the unique simplex in $\Gamma$ with 
$\delta_{\Sigma}(\gamma)=\{\gamma_1,\cdots,\gamma_n\}$ and   $f(\gamma)=\sigma$.}

\Lem{\label{lem:bScen-is-well-defined}
The composition given by Equation (\ref{eq:composition-bundle}) is associative. 
} 
\Proof{
Given a type II morphism $\alpha:f \to f'$ and  relations 
$\pi: \Sigma' \to \hat N \Sigma$, $\pi': \Sigma'' \to \hat N \Sigma'$, we have
\begin{equation}\label{eq:FFForComp}
(\pi \kleisli \pi')^\ast(\hat N \alpha)
=(\pi')^\ast(\hat N \pi^{\ast}(\hat N \alpha)).
\end{equation}
{In Lemma \ref{lem:pull-hat-N} we proved that 
$(\mu_{\Sigma}\circ\hat N \pi)^\ast(\hat N \Gamma)=\hat N \pi^{\ast}(\hat N \Gamma)$. Using this we conclude that}   
$(\mu_{\Sigma}\circ\hat N \pi)^\ast(\hat N \alpha)=\hat N \pi^{\ast}(\hat N \alpha)$. {Then we have} 
$(\pi \kleisli \pi')^\ast(\hat N \alpha)=(\mu_{\Sigma}\circ \hat N \pi \circ \pi')^\ast(\hat N \alpha)=
(\pi')^\ast((\mu_{\Sigma}\circ\hat N \pi)^\ast(\hat N \alpha))=(\pi')^\ast(\hat N \pi^{\ast}(\hat N \alpha))$.
%\ak{[[I added a proof]]}
Given $(\pi_1,\alpha_1):f \to f'$, $(\pi_2,\alpha_2):f' \to f''$, and $(\pi_3,\alpha_3):f'' \to f'''$, we have 
$$
\begin{aligned}
(\pi_3,\alpha_3)\circ \left((\pi_2,\alpha_2)\circ (\pi_1,\alpha_1)\right)&=
(\pi_3,\alpha_3)\circ (\pi_1 \kleisli \pi_2,\alpha_2 \circ \pi_2^{\ast}(\hat N \alpha_1))\\
&=\left((\pi_1 \kleisli \pi_2)\kleisli \pi_3 ,\alpha_3 \circ {\pi_3}^{\ast}(\hat N\left(\alpha_2 \circ \pi_2^{\ast}(\hat N \alpha_1)\right))\right)
\\
&=\left((\pi_1 \kleisli \pi_2)\kleisli \pi_3 ,\alpha_3 \circ {\pi_3}^{\ast}(\hat N \alpha_2 \circ \hat N \pi_2^{\ast}(\hat N \alpha_1))\right)
\\
&=\left((\pi_1 \kleisli \pi_2)\kleisli \pi_3 ,\alpha_3 \circ {\pi_3}^{\ast}(\hat N \alpha_2) \circ {\pi_3}^{\ast}(\hat N \pi_2^{\ast}(\hat N \alpha_1))\right).
\end{aligned}
$$
On the other hand, 
$$
\begin{aligned}
\left((\pi_3,\alpha_3)\circ (\pi_2,\alpha_2)\right)\circ (\pi_1,\alpha_1)&=(\pi_2 \kleisli \pi_3,\alpha_3 \circ \pi_3^{\ast}(\hat N \alpha_2))\circ (\pi_1,\alpha_1)\\
&=\left(\pi_1 \kleisli (\pi_2\kleisli \pi_3) ,\alpha_3 \circ {\pi_3}^{\ast}(\hat N \alpha_2) \circ (\pi_2 \kleisli \pi_3)^{\ast}(\hat N \alpha_1)\right).
\end{aligned}
$$
By Equation (\ref{eq:FFForComp}) and the associativity of the Kleisli composition we obtain the equality.
}

\subsection{{Lemmas: Embedding scenarios into bundle scenarios}}
\label{sec:lem-embedding-into-bundle-scen}
 
{In this section we prove key lemmas that are used in the construction of the functor $\eE:\catScen \to \catbScen$ and showing that it is a fully-faithful embedding.} 
 
\begin{lem}\label{lem:pullback} 
The following diagram is a pull-back square
\begin{equation}\label{Dia: EoSig}
\begin{tikzcd}[column sep=large,row sep=large]
\hat N (\eE_O\Sigma) \arrow[d,"{\hat N f_{(\Sigma,O)}}"'] & (\eE_O\circ \pi)\Sigma' \arrow[d,"f_{{\eE}_O\circ \pi}"] \arrow[l,"l"] \\
\hat N\Sigma & \Sigma' \arrow[l,"\pi"]
\end{tikzcd}
\end{equation}
where $l(x',s)= (\pi(x'),s)$.  

\end{lem}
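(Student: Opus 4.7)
The plan is to exhibit an explicit isomorphism between $(\mathcal{E}_O\circ\pi)\Sigma'$ and the concrete pull-back of $\hat N f_{(\Sigma,O)}$ along $\pi$ described in Proposition~\ref{pro:pull-back}, and to verify that it commutes with the two projections. The main ingredient is that $\hat N f_{(\Sigma,O)}:\hat N(\mathcal{E}_O\Sigma)\to \hat N\Sigma$ is a bundle scenario, since $f_{(\Sigma,O)}$ is one by Proposition~\ref{pro:canonical-bundle} and $\hat N$ preserves bundle scenarios by Proposition~\ref{pro:N-bundle-sce}. Consequently Proposition~\ref{pro:pull-back} applies and gives a concrete model $P$ of the pull-back, whose vertices are pairs $((\sigma,s),x')$ with $(\sigma,s)\in\mathcal{E}_O\Sigma$ and $x'\in\Sigma'_0$ satisfying $f_{(\Sigma,O)}(\sigma,s)=\sigma=\pi(x')$.

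The first step is to observe that these vertices are in canonical bijection with the vertices of $(\mathcal{E}_O\circ\pi)\Sigma'$: a vertex of the latter is a pair $(x',s)$ with $x'\in\Sigma'_0$ and $s\in\mathcal{E}_O(\pi(x'))$, and the map $(x',s)\mapsto ((\pi(x'),s),x')$ is bijective. This already determines a candidate simplicial complex map $\phi:(\mathcal{E}_O\circ\pi)\Sigma'\to P$ on vertices, and by construction $l$ and $f_{\mathcal{E}_O\circ\pi}$ agree with the two projections out of $P$ after composing with $\phi$.

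Next I would check that $\phi$ is a bijection on simplices. A simplex $(\sigma',s)$ of $(\mathcal{E}_O\circ\pi)\Sigma'$ is identified with the vertex set $\{(x',s|_{\pi(x')}):x'\in\sigma'\}$, which $\phi$ sends to $V:=\{((\pi(x'),s|_{\pi(x')}),x'):x'\in\sigma'\}$. By the explicit description of simplices of a pull-back in Proposition~\ref{pro:pull-back}, $V$ is a simplex of $P$ precisely when its two projections are simplices: the second projection gives $\sigma'\in\Sigma'$, and the first projection $\{(\pi(x'),s|_{\pi(x')}):x'\in\sigma'\}$ is a simplex of $\hat N(\mathcal{E}_O\Sigma)$ because $\cup_{x'\in\sigma'}(\pi(x'),s|_{\pi(x')})=(\overline\pi(\sigma'),s)\in\mathcal{E}_O\Sigma$. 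For the reverse direction, a simplex of $P$ is a set $\{((\sigma_i,s_i),x_i')\}$ whose two projections are simplices; since $\hat N f_{(\Sigma,O)}$ is discrete over vertices and $f_{(\Sigma,O)}(\sigma_i,s_i)=\pi(x_i')$, Remark~\ref{Improppp} ensures that the $s_i$ are obtained by restriction of a single tuple $s\in\mathcal{E}_O(\overline\pi(\sigma'))$ where $\sigma'=\{x_i'\}$. This produces a preimage under $\phi$, and uniqueness is clear.

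The main (very mild) obstacle is the bookkeeping in the last paragraph: one must keep straight that a vertex of $\hat N(\mathcal{E}_O\Sigma)$ is a simplex of $\mathcal{E}_O\Sigma$, and that a simplex of $\hat N(\mathcal{E}_O\Sigma)$ is a collection of such whose componentwise union remains a simplex of $\mathcal{E}_O\Sigma$; the identification $(\sigma,s)\leftrightarrow\{(x,s(x)):x\in\sigma\}$ reduces this to the fact that compatibility of the $s_i$'s on intersections of the $\sigma_i$'s is automatic once we know everything sits inside a single tuple over $\overline\pi(\sigma')$. Commutativity of both squares then follows from the definitions of $l$ and $f_{\mathcal{E}_O\circ\pi}$.
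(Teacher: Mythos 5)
Your proof is correct and follows essentially the same route as the paper's: both reduce to checking that $l$ sends simplices to simplices (via $\cup_{x'\in\sigma'}(\pi(x'),s|_{\pi(x')})=(\overline{\pi}(\sigma'),s)\in\eE_O\Sigma$) and that every matched pair of simplices over the base lifts uniquely to $(\eE_O\circ\pi)\Sigma'$, because the first projection being a simplex of $\hat N(\eE_O\Sigma)$ forces the $s_i$ to assemble into a single tuple $s\in\eE_O(\overline{\pi}(\sigma'))$. The paper verifies this unique-lifting property directly rather than packaging it as an explicit isomorphism with the concrete pull-back model of Proposition~\ref{pro:pull-back}, but the content is identical.
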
 
\Proof{
%\ak{[[I added a proof]]} 
First, we prove that the map $l$ is a valid morphism in $\catsComp$. Note that for $s \in \mathcal{E}_O(\pi(x'))$, the pair 
$(\pi(x'),s)$ is a simplex in $\mathcal{E}_O \Sigma$, that is, a vertex in $\hat N (\mathcal{E}_O \Sigma)$. 
For a simplex $(\sigma',s)\in (\mathcal{E}_O\circ \pi)\Sigma'$, we have   
$l(\sigma',s)=
\set{l(x',s|_{\pi(x')}):\, x'\in \sigma'}=\set{(\pi(x'),s|_{\pi(x')}):\, x'\in \sigma'}$. 
{Then the union
$$ 
\cup_{x'\in \sigma'}(\pi(x'),s|_{\pi(x')}) =
(\cup_{x'\in \sigma}\pi(x'),s) 
=(\overline{\pi}
(\sigma'),s) 
$$
is a simplex in $\mathcal{E}_O \Sigma$.}
%We need to prove that $\cup_{x'\in \sigma'}(\pi(x'),s|_{\pi(x')})$ is a simplex in $\mathcal{E}_O \Sigma$. 
%This holds since $\cup_{x'\in \sigma'}(\pi(x'),s|_{\pi(x')})=
%(\cup_{x'\in \sigma}\pi(x'),s) 
%=(\overline{\pi}
%(\sigma'),s)$ where $\overline{\pi}(\sigma') \in \Sigma$ and $s \in \mathcal{E}_O(\overline{\pi}(\sigma'))$ . 

{Next} we prove that Diagram (\ref{Dia: EoSig}) is a pullback square. 
It is obvious that the square commutes. 
Given $\{(\sigma_1,s_1),\cdots,(\sigma_n,s_n)\} \in \hat N(\mathcal{E}_O \Sigma)$ and $\sigma' \in \Sigma'$ such that 
$\hat N f_{(\Sigma,O)}((\sigma_1,s_1),\cdots,(\sigma_n,s_n))=\pi(\sigma')$. Since $\cup_{i=1}^{n}(\sigma_i,s_i) \in \eE_O\Sigma$ we have a unique 
$s \in \mathcal{E}_O(\cup_{i=1}^n \sigma_i)$ such that $s|_{\sigma_i}=s_i$. In addition, we have  
$\pi(\sigma')=\{\sigma_1,\cdots,\sigma_n\}$, and thus $\overline{\pi}(\sigma')=\cup_{i=1}^n \sigma_i$. Therefore $(\sigma',s)$ is the unique element in $(\mathcal{E}_O \circ \pi)\Sigma'$ with  $f_{\mathcal{E}_O \circ \pi}(\sigma',s)=\sigma'$
and $l(\sigma',s)=\{(\sigma_1,s_1),\cdots,(\sigma_n,s_n)\}$.
}

\begin{lem}\label{lem:ralpr} 
%\comm{where is this used?}
For morphisms
$(\pi,\alpha): (\Sigma,O) \to (\Sigma',O')$ and 
$(\pi',\alpha'): (\Sigma',O') \to (\Sigma'',O'')$ of $\catScen$ we have
$$
r(\alpha' \circ (\alpha \ast \Id_{\overline{\pi'}}))
=r(\alpha')\circ(\pi')^\ast
(\hat N r(\alpha))
$$
{where $r$ is defined in Diagram (\ref{diag:pi-r-alpha}).}
\end{lem}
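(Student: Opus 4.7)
The plan is to verify the identity by evaluating both sides on an arbitrary element of the common source. Both sides have codomain $\eE_{O''}\Sigma''$, and for the common domain I would use the identification $(\pi')^*(\hat N (\eE_O \circ \pi)\Sigma') \cong (\eE_O \circ (\pi \kleisli \pi'))\Sigma''$, which follows by combining Lemma \ref{lem:pullback} (giving $\pi^*(\hat N \eE_O\Sigma) = (\eE_O\circ\pi)\Sigma'$) with Lemma \ref{lem:pull-hat-N} applied to $f_{(\Sigma,O)}$, together with $\overline{\pi \kleisli \pi'} = \overline{\pi} \circ \overline{\pi'}$ from Equation (\ref{eq:pibarComp}). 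Concretely, an element $(u,\sigma'')$ of the pullback, with $u = \{(\sigma'_1, s_1), \ldots, (\sigma'_n, s_n)\}$ and $\pi'(\sigma'') = \{\sigma'_1, \ldots, \sigma'_n\}$, corresponds to $(\sigma'', s)$ with $s \in \eE_O(\overline{\pi}(\overline{\pi'}(\sigma'')))$ the unique assembly satisfying $s|_{\overline{\pi}(\sigma'_i)} = s_i$ for all $i$.

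Evaluating the left-hand side on $(\sigma'', s)$ is straightforward: unwinding the definition of $r$ and the horizontal composition $(\alpha \ast \Id_{\overline{\pi'}})_{\sigma''} = \alpha_{\overline{\pi'}(\sigma'')}$ yields
\[
r(\alpha' \circ (\alpha \ast \Id_{\overline{\pi'}}))(\sigma'',s) \;=\; \bigl(\sigma'',\; \alpha'_{\sigma''}(\alpha_{\overline{\pi'}(\sigma'')}(s))\bigr).
\]

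For the right-hand side, I would first apply $\hat N r(\alpha)$ to $u$, obtaining $\{(\sigma'_i, \alpha_{\sigma'_i}(s_i))\}_i \in \hat N \eE_{O'}\Sigma'$, and then use Lemma \ref{lem:pullback} to interpret the pair with $\sigma''$ as an element $(\sigma'', s') \in (\eE_{O'}\circ \pi')\Sigma''$ where $s'|_{\sigma'_i} = \alpha_{\sigma'_i}(s_i) = \alpha_{\sigma'_i}(s|_{\overline{\pi}(\sigma'_i)})$. The key step is now the naturality of $\alpha : \eE_O \circ \overline{\pi} \to \eE_{O'}$ applied to each inclusion $\sigma'_i \hookrightarrow \overline{\pi'}(\sigma'')$ in $\catC_{\Sigma'}$: this gives $\alpha_{\sigma'_i}(s|_{\overline{\pi}(\sigma'_i)}) = \alpha_{\overline{\pi'}(\sigma'')}(s)|_{\sigma'_i}$, and since $\cup_i \sigma'_i = \overline{\pi'}(\sigma'')$ these restrictions determine $s'$ uniquely, forcing $s' = \alpha_{\overline{\pi'}(\sigma'')}(s)$. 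Applying $r(\alpha')$ then produces $(\sigma'', \alpha'_{\sigma''}(\alpha_{\overline{\pi'}(\sigma'')}(s)))$, matching the left-hand side.

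The main obstacle I expect is purely bookkeeping: carefully distinguishing $\pi'$ as a simplicial complex map $\Sigma'' \to \hat N \Sigma'$ from its induced functor $\overline{\pi'}:\catC_{\Sigma''}\to \catC_{\Sigma'}$, and correctly tracing the identification of $(\pi')^*(\hat N(\eE_O\circ\pi)\Sigma')$ with $(\eE_O\circ(\pi\kleisli\pi'))\Sigma''$ through the explicit isomorphism $S$ constructed in the proof of Lemma \ref{lem:pull-hat-N}. Once these identifications are in place, the equality reduces to a single application of naturality of $\alpha$.
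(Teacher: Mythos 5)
Your proposal is correct and follows essentially the same route as the paper's proof: evaluate both sides on an element $(\sigma'',s)$ of $(\eE_O\circ(\pi\kleisli\pi'))\Sigma''$ using the identifications from Lemmas \ref{lem:pullback} and \ref{lem:pull-hat-N}, and check that each yields $(\sigma'',\alpha'_{\sigma''}(\alpha_{\overline{\pi'}(\sigma'')}(s)))$. Your write-up is in fact slightly more explicit than the paper's at the one nontrivial point, namely spelling out that the naturality of $\alpha$ is what forces the componentwise images $\alpha_{\sigma'_i}(s|_{\overline{\pi}(\sigma'_i)})$ to reassemble into $\alpha_{\overline{\pi'}(\sigma'')}(s)$.
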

\Proof{
Note that $\alpha' \circ (\alpha \ast \Id_{\overline{\pi'}})$ is a natural transformation $\eE_O \circ \overline{\pi \kleisli \pi'}  \to \eE_{O''}$. Therefore we have a map of simplicial complexes
$r(\alpha' \circ (\alpha \ast \Id_{\overline{\pi'}})):(\eE_O \circ (\pi \kleisli \pi'))\Sigma'' \to \eE_{O''}\Sigma''$.
Given $\sigma'' \in \Sigma''$ and $s\in \eE_O(\overline{\pi \kleisli \pi'}(\sigma''))$, we have
$$
r(\alpha'\circ (\alpha\ast \Id_{\overline{\pi'}}))(\sigma'',s)=(\sigma'',(\alpha'\circ (\alpha\ast \Id_{\overline{\pi'}})
)_{\sigma''}(s))=(\sigma'',\alpha'_{\sigma''}(\alpha_{\overline{\pi'}(\sigma'')}(s))).
$$
Observe that by Lemma \ref{lem:pullback} 
$$(\pi')^\ast(\hat N((\eE_O \circ \pi)\Sigma'))=(\pi')^\ast(\hat N \pi^\ast(\hat N(\eE_O \Sigma)) ).$$ 
Using Lemma \ref{lem:pull-hat-N} we obtain that 
$(\pi')^\ast(\hat N \pi^\ast(\hat N(\eE_O \Sigma)) )=(\pi \kleisli \pi')^\ast(\hat N(\eE_O \Sigma))$, which again by Lemma \ref{lem:pullback} is equal to 
$(\eE_O \circ (\pi \kleisli \pi'))\Sigma'' $. Thus we can see $(\sigma'',s)$ as an element of $(\pi')^\ast(\hat N((\eE_O \circ \pi)\Sigma'))$ and by applying $(\pi')^\ast(\hat N r(\alpha))$ to this element  we obtain $(\sigma'',\alpha_{\overline{\pi'}}(s))$ in 
$(\pi')^\ast(\hat N \eE_{O'}\Sigma')$. According to Lemma \ref{lem:pullback} we can identify this element with $(\eE_{O'}\circ \pi') \Sigma''$. Finally, by applying $r(\alpha')$ on $(\sigma'',\alpha_{\overline{\pi'}}(s))$ we obtain 
$(\sigma'',\alpha'_{\sigma''}(\alpha_{\overline{\pi'}(\sigma'')}(s)))$.
}

\begin{lem}\label{lem:assemble-alpha}
Let $\pi:\Sigma' \to \hat N \Sigma$ be a simplicial complex map. Any family of maps $\set{\alpha_x:\mathcal{E}_{O}(\pi(x)) \to O'_{x}}_{x \in \Sigma'_0}$ can be assembled into a natural transformation $\alpha: \mathcal{E}_O \circ \overline{\pi} \to \mathcal{E}_{O'}$.
\end{lem}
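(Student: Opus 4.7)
The plan is to define the component $\alpha_{\sigma'}$ of the natural transformation from the given family $\set{\alpha_x}_{x \in \Sigma'_0}$ by taking each coordinate from the corresponding vertex map, and then to check naturality by direct inspection. For the definition, given $\sigma' \in \Sigma'$, recall that $\overline{\pi}(\sigma')=\cup_{x\in \sigma'} \pi(x)$, so for every vertex $x\in \sigma'$ we have $\pi(x)\subseteq \overline{\pi}(\sigma')$. Hence for $s\in \mathcal{E}_O(\overline{\pi}(\sigma'))$ the restriction $s|_{\pi(x)}\in \mathcal{E}_O(\pi(x))$ is well-defined, and I set
\[
\alpha_{\sigma'}(s) \;=\; \bigl(\alpha_x(s|_{\pi(x)})\bigr)_{x\in \sigma'} \;\in\; \prod_{x\in \sigma'} O'_x \;=\; \mathcal{E}_{O'}(\sigma').
\]

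For naturality, first note that for an inclusion $\sigma''\subseteq \sigma'$ in $\Sigma'$ the functor $\overline{\pi}$ gives $\overline{\pi}(\sigma'')\subseteq \overline{\pi}(\sigma')$, so the restriction $s\mapsto s|_{\overline{\pi}(\sigma'')}$ is the map $\mathcal{E}_O(\overline{\pi})$ applied to this inclusion. I need to verify that the square
\[
\begin{tikzcd}[column sep=large]
\mathcal{E}_O(\overline{\pi}(\sigma')) \arrow[r,"\alpha_{\sigma'}"] \arrow[d,"-|_{\overline{\pi}(\sigma'')}"'] & \mathcal{E}_{O'}(\sigma') \arrow[d,"-|_{\sigma''}"] \\
\mathcal{E}_O(\overline{\pi}(\sigma'')) \arrow[r,"\alpha_{\sigma''}"'] & \mathcal{E}_{O'}(\sigma'')
\end{tikzcd}
\]
commutes. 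Tracing $s$ clockwise produces $(\alpha_x(s|_{\pi(x)}))_{x\in \sigma''}$, and counter-clockwise produces $(\alpha_x((s|_{\overline{\pi}(\sigma'')})|_{\pi(x)}))_{x\in \sigma''}$; since $\pi(x)\subseteq \overline{\pi}(\sigma'')$ for every $x\in \sigma''$, the two restrictions $(s|_{\overline{\pi}(\sigma'')})|_{\pi(x)}$ and $s|_{\pi(x)}$ agree, so both tuples coincide coordinate by coordinate.

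There is no real obstacle here: the construction is forced coordinate-wise by the requirement that for a singleton vertex $\{x\}\in \Sigma'$ the component $\alpha_{\{x\}}$ recovers $\alpha_x$ (since $\overline{\pi}(\{x\})=\pi(x)$), and naturality reduces to the compatibility of restrictions on products. The only thing to keep track of is the identification of $\overline{\pi}(\sigma')$ with the union of the $\pi(x)$'s and the commutation of the two nested restrictions used above.
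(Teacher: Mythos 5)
Your proof is correct and follows essentially the same route as the paper: both define $\alpha_{\sigma'}(s)=(\alpha_x(s|_{\pi(x)}))_{x\in\sigma'}$ and reduce naturality to the compatibility of nested restrictions. If anything, you verify the naturality square more explicitly than the paper does, which only sketches the converse construction after showing the components are forced at vertices.
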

\begin{proof}
A natural transformation $\alpha :\mathcal{E}_O \circ \overline{\pi} \to \mathcal{E}_{O'}$ is a collection of maps 
$$
\set{\alpha_{\sigma}: \mathcal{E}_O(\overline{\pi}(\sigma)) \to  \mathcal{E}_{O'}(\sigma)}_{\sigma \in \Sigma'}
$$
such that for   $\sigma_1 \subset \sigma_2$ the following diagram commutes
\begin{equation}
\begin{tikzcd}[column sep=huge,row sep=large]
\mathcal{E}_O(\overline{\pi}(\sigma_2)) 
 \arrow[r,"\alpha_
 {\sigma_2}"] \arrow[d] & \mathcal{E}_{O'}(\sigma_2) \arrow[d] \\
 \mathcal{E}_O(\overline{\pi}(\sigma_1)) \arrow[r]  
 \arrow[r,"\alpha_
 {\sigma_1}"]& \mathcal{E}_{O'}(\sigma_1) 
\end{tikzcd}
\end{equation}
In particular, for $\tilde{x}\in \sigma$, the following diagram commutes
\begin{equation}
\begin{tikzcd}[column sep=huge,row sep=large]
\prod_{x \in \overline{\pi}(\sigma)}O_x 
 \arrow[r,"\alpha_
 {\sigma}"] \arrow[d] &  \prod_{x \in \sigma}O'_x \arrow[d] \\
\prod_{x \in \pi(\tilde{x})}O_x  \arrow[r]  
 \arrow[r,"\alpha_
 {\tilde{x}}"]& O'_{\tilde{x}} 
\end{tikzcd}
\end{equation}
{This} means that for $s \in \prod_{x \in \overline{\pi}(\sigma)}O_x $, we have  
$\alpha_{\sigma}(s)(\tilde{x})=\alpha_
{\tilde{x}}(s|_{\pi(\tilde{x})})$. Thus $\alpha$ is defined by the maps $\{\alpha_x\}_{x \in \Sigma'_0}$. On the other hand, for a collection of maps $\{\alpha_x:\prod_{y\in\pi(x)}O_y \to O'_x\}_{x \in \Sigma'_0}$, one can construct a natural {transformation} $\alpha :\mathcal{E}_O\circ \overline{\pi} \to \mathcal{E}_{O'}$ 
by defining $\alpha_{\sigma}(s)=\{\alpha_x(s|_{\pi(x)})\}_{x\in \sigma}$ for 
$\sigma \in \Sigma'$ and $s \in \prod_{x \in \overline{\pi}(\sigma)}O_x$.
\end{proof}

\begin{lem}\label{lem:assemble-typeII}
Let $\pi:\Sigma' \to \hat N \Sigma$ be a simplicial complex map.  Any family of maps 
$$\set{\gamma_x :\{x\} \times \mathcal{E}_{O}(\pi(x)) \to \{x\} \times O'_{x}}_{x \in \Sigma'_0}$$ 
can be assembled into a type II morphism
$\gamma: (\mathcal{E}_O \circ \pi)\Sigma' \to \mathcal{E}_{O'}\Sigma'$.
\end{lem}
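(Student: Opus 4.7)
The plan is to observe that, since $\gamma_x$ is required to map $\{x\}\times\mathcal{E}_O(\pi(x))$ into $\{x\}\times O'_x$, each $\gamma_x$ is entirely determined by its second-coordinate component $\tilde\gamma_x:\mathcal{E}_O(\pi(x)) \to O'_x$. I will extend this vertex data to all simplices by the only formula consistent with the identification of a simplex $(\sigma',s)\in (\mathcal{E}_O\circ\pi)\Sigma'$ with the vertex set $\{(x, s|_{\pi(x)}): x\in\sigma'\}$ that was recorded just before Diagram~(\ref{diag:pi-r-alpha}).

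Concretely, I define the vertex map $\gamma\colon ((\mathcal{E}_O\circ\pi)\Sigma')_0 \to (\mathcal{E}_{O'}\Sigma')_0$ by $\gamma(x,s) = (x,\tilde\gamma_x(s))$, and then check that it extends to a morphism in $s\catComp$. Given a simplex $(\sigma',s)$ in $(\mathcal{E}_O\circ\pi)\Sigma'$, regarded as $\{(x, s|_{\pi(x)}): x\in\sigma'\}$, we compute
\[
\{\gamma(x, s|_{\pi(x)}) : x\in\sigma'\} \;=\; \{(x,\tilde\gamma_x(s|_{\pi(x)})) : x\in\sigma'\}.
\]
Setting $s'(x) := \tilde\gamma_x(s|_{\pi(x)})$ for each $x\in\sigma'$ yields an element $s'\in \prod_{x\in\sigma'}O'_x = \mathcal{E}_{O'}(\sigma')$, so the image set coincides with the simplex $(\sigma',s')$ of $\mathcal{E}_{O'}\Sigma'$. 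Hence $\gamma$ is a well-defined map of simplicial complexes extending the prescribed $\gamma_x$ on vertices.

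It remains to verify the type II condition, namely commutativity of the triangle
\[
\begin{tikzcd}
(\mathcal{E}_O\circ\pi)\Sigma' \arrow[rr,"\gamma"] \arrow[rd,"f_{\mathcal{E}_O\circ\pi}"'] & & \mathcal{E}_{O'}\Sigma' \arrow[ld,"f_{(\Sigma',O')}"] \\
& \Sigma' &
\end{tikzcd}
\]
over $\Sigma'$. This is immediate from the construction, since $\gamma$ sends a simplex $(\sigma',s)$ to a simplex $(\sigma',s')$ with the same first coordinate. The only substantive check is the well-definedness of $s'$ as an element of $\mathcal{E}_{O'}(\sigma')$, and I do not expect a genuine obstacle here: the content is simply to recognise that the vertex-level assignment extends uniquely by the restriction rule $s\mapsto s|_{\pi(x)}$, and that the resulting second coordinates assemble into a tuple in $\prod_{x\in\sigma'}O'_x$.
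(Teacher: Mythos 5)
Your proof is correct and follows essentially the same route as the paper's: define $\gamma$ on vertices via the given family, verify that the image of a simplex $(\sigma',s)=\{(x,s|_{\pi(x)}):x\in\sigma'\}$ assembles into a simplex $(\sigma',s')$ with $s'(x)=\tilde\gamma_x(s|_{\pi(x)})$, and note that commutativity over $\Sigma'$ is automatic since the first coordinate is preserved. The only cosmetic difference is that you make the second-coordinate component $\tilde\gamma_x$ explicit, which the paper leaves implicit.
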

\Proof{
%\ak{[[I added the proof]]}
A simplicial complex map $\gamma: (\mathcal{E}_O \circ \pi)\Sigma' \to \mathcal{E}_{O'}\Sigma'$ is determined by its restriction to the vertices of $(\mathcal{E}_O\circ \pi)\Sigma'$. Hence it is defined by the maps $\gamma|_{\{x\}\times\mathcal{E}_O(\pi(x))}$, where $x \in \Sigma'_0$. 
On the other hand, if we have a collection of maps  $\set{\gamma_x :\{x\} \times \mathcal{E}_{O}(\pi(x)) \to \{x\} \times O'_{x}}_{x \in \Sigma'_0}$ then we can construct a simplicial complex map $\gamma : (\mathcal{E}_O \circ \pi) \Sigma' \to \mathcal{E}_{O'}\Sigma'$ by defining $\gamma(x,s)=\gamma_x(x,s)$ for every vertex $(x,s) \in (\mathcal{E}_O\circ \pi)\Sigma'$. 
This is because, for a simplex $(\sigma,s)$ in $\mathcal{E}\circ \pi(\Sigma')$, where $\sigma=\{x_1,\cdots,x_n\}$, we have that
$$
\begin{aligned}
\gamma(\sigma,s) &=
{\gamma}(\{(x_1,s|_{\pi(x_1)}),\cdots,(x_n,s|_{\pi(x_n)})\})\\
&=\{(\gamma_{x_1}(s|_{\pi(x_1)}),\cdots,\gamma_{x_n}(s|_{\pi(x_n)})\}
\in \{\sigma\}\times \prod_{x \in \sigma}O'_x.
\end{aligned}
$$ 
Therefore {$\gamma(\sigma,s)$} is a simplex in $\mathcal{E}_{O'}\Sigma'$. In addition, $f_{(\Sigma',O')} \circ \gamma = f_{\mathcal{E}_O \circ \pi}$. 
}

\subsection{{Lemmas: Empirical model functor on bundle scenarios}}
\label{sec:lem-empirical-bundle-scen}
 
{In this section we provide important results for our study of push-forward empirical models. First we show that our construction $p\mapsto \hat Np$ given in Definition \ref{def:hatNp} produces a well-defined empirical model. Then we prove results concerning push-forwards along type I and type II morphisms.}

\begin{lem}\label{lem:hat-N-p}
{The family} $\hat Np$ {of distributions constructed} in Definition \ref{def:hatNp} is a well-defined empirical model on $\hat Nf$.
\end{lem}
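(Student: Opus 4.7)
The plan is to verify that each $(\hat Np)_\tau$ is a finitely supported distribution on $(\hat Nf)^{-1}(\tau)$ summing to $1$, and that the family satisfies the restriction-compatibility condition of Definition~\ref{def:emprical-bundle}. The main technical input will be a bijection between fibres that translates everything back to properties of the given empirical model $p$ on $f$.

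Specifically, for $\tau=\{\sigma_1,\dots,\sigma_n\}\in \hat N\Sigma$ I will establish that the map
\[
\Phi_\tau\colon (\hat Nf)^{-1}(\tau)\to f^{-1}\bigl(\textstyle\cup_i\sigma_i\bigr), \qquad \{\gamma_1,\dots,\gamma_n\}\mapsto \cup_i\gamma_i,
\]
is a bijection. Surjectivity and injectivity will both follow from the discrete-over-vertices hypothesis via Remark~\ref{Improppp}: for any $\gamma \in f^{-1}(\cup_i \sigma_i)$, the unique lifts $\gamma_i\subset\gamma$ with $f(\gamma_i)=\sigma_i$ assemble into the unique preimage, and a cardinality count forces $\cup_i\gamma_i=\gamma$ since $f|_\gamma$ is injective on vertices. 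With $\Phi_\tau$ in hand, Equation~\eqref{eq:hatNp} reads $(\hat Np)_\tau = D_R(\Phi_\tau^{-1})(p_{\cup_i\sigma_i})$, so $(\hat Np)_\tau$ automatically inherits from $p_{\cup_i\sigma_i}$ the properties of being a finitely supported probability distribution.

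For compatibility, I will fix $\tau'=\{\sigma'_1,\dots,\sigma'_m\}\subset \tau$ and $\{\gamma'_1,\dots,\gamma'_m\}\in (\hat Nf)^{-1}(\tau')$ and combine Equation~\eqref{eq:Improppp} with the bijections $\Phi_\tau$ and $\Phi_{\tau'}$. The extensions $\{\gamma_1,\dots,\gamma_n\}$ of $\{\gamma'_1,\dots,\gamma'_m\}$ inside $(\hat Nf)^{-1}(\tau)$ will correspond, under $\Phi_\tau$, precisely to simplices $\gamma\in f^{-1}(\cup_i\sigma_i)$ containing $\cup_j\gamma'_j=\Phi_{\tau'}(\{\gamma'_1,\dots,\gamma'_m\})$, because discreteness over vertices forces the unique $\gamma_i$ above $\sigma'_j$ to equal $\gamma'_j$. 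Summing and applying \eqref{eq:Improppp} on the $f$-side will yield
\[
(\hat Np)_\tau|_{\tau'}(\{\gamma'_1,\dots,\gamma'_m\}) \;=\; p_{\cup_i\sigma_i}\big|_{\cup_j\sigma'_j}(\cup_j\gamma'_j),
\]
and compatibility of $p$ itself will identify the right-hand side with $p_{\cup_j\sigma'_j}(\cup_j\gamma'_j)=(\hat Np)_{\tau'}(\{\gamma'_1,\dots,\gamma'_m\})$, closing the argument.

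The main obstacle I anticipate is the index bookkeeping in matching subsets $\tau'\subset\tau$ in $\hat N\Sigma$ against unions $\cup_j\sigma'_j\subset\cup_i\sigma_i$ in $\Sigma$, and in matching extensions of $\{\gamma'_j\}$ in $\hat N\Gamma$ against extensions of $\cup_j\gamma'_j$ in $\Gamma$. This is exactly where the discrete-over-vertices hypothesis is indispensable: it rigidifies both correspondences by forcing uniqueness of lifts through $f$, so that the restriction operation on $\hat Nf$ transports cleanly across $\Phi_\tau$ into the restriction operation on $f$.
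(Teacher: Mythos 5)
Your proposal is correct and follows essentially the same route as the paper: the paper's proof likewise converts the sum over $(\hat Nf)^{-1}(\tau)$ into a sum over $f^{-1}(\cup_i\sigma_i)$ using the uniqueness of decompositions $\gamma = \cup_i\gamma_i$ granted by discreteness over vertices (your bijection $\Phi_\tau$, left implicit in the paper as a re-indexing of the sum), and then proves compatibility by matching extensions of $\{\gamma'_1,\dots,\gamma'_m\}$ in $\hat N\Gamma$ with simplices $\gamma \supseteq \cup_j\gamma'_j$ in $\Gamma$, applying Equation~(\ref{eq:Improppp}) and the compatibility of $p$. Making $\Phi_\tau$ explicit and writing $(\hat Np)_\tau = D_R(\Phi_\tau^{-1})(p_{\cup_i\sigma_i})$ is a mild repackaging, not a different argument.
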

\Proof{
%\ak{[[I added the proof]]}
First, we prove that $(\hat N p)_{\{\sigma_1\cdots\sigma
_n\}}$ is a well-defined distribution on $(\hat N f)^{-1}(\{\sigma_1\cdots\sigma
_n\})$. We have
$$
\begin{aligned}
\sum_{\{\gamma_1\cdots\gamma
_n\}\in (\hat N f)^{-1}(\{\sigma_1\cdots\sigma
_n\})} (\hat N p)_{\{\sigma_1\cdots\sigma
_n\}}(\{\gamma_1,\cdots,\gamma_n\}) &= \sum_{\{\gamma_1,\cdots,
\gamma
_n\} :\,f(\gamma_i)
=\sigma_i }p_{\cup_{i}\sigma_i}(\cup_{i}\gamma_i) \\
&=\sum_{\gamma :\,f(\gamma) \cup_{i}\sigma_i }p_{\cup_{i}\sigma_i}(\gamma)\\
&=
\sum_{\gamma \in f^{-1}(\cup_{i}\sigma_i) }p_{\cup_{i}\sigma_i}(\gamma)=1
\end{aligned}
$$
%which is equal to
%
%$$
%\sum_{\gamma :\,f(\gamma)
%=\cup_{i}\sigma_i }p_{\cup_{i}\sigma_i}(\gamma)=
%\sum_{\gamma \in f^{-1}(\cup_{i}\sigma_i) }p_{\cup_{i}%\sigma_i}(\gamma)=1
%$$ 
%
since $f$ is discrete over vertices.
Now, we prove the compatibility:
$$
\begin{aligned}
(\hat N p)_{\{\sigma_1,\cdots,\sigma
_k,\sigma_{k+1},\cdots,\sigma_n\}|_{ 
\{\sigma_1,\cdots,\sigma
_k\}}}(\{\gamma_1,\cdots,\gamma_k\})
&=\sum_{\{\gamma_{k+1},\cdots,\gamma_n\}: \,f(\gamma_i)=\sigma_i}
(\hat N p)_{\{\sigma_1,\cdots,\sigma_n\}}(\{\gamma_1,\cdots,\gamma_n\})\\
&=\sum_{\{\gamma_{k+1},\cdots,\gamma_n\}: \,f(\gamma_i)=\sigma_i}
p_{\cup_{i=1}^n\sigma_i(\cup_{i=1}^n\gamma_i)}\\
&=\sum_{\cup_{i=1}^k \gamma_i \subseteq \gamma \in f^{-1}(\cup_{i=1}^n\sigma_i)}
p_{\cup_{i=1}^n\sigma_i}(\gamma)\\
&=p_{\cup_{i=1}^n\sigma_i}|_{\cup_{i=1}^k\sigma_i}(\cup_{i=1}^k\gamma_i)\\
&=p_{\cup_{i=1}^k\sigma_i}(\cup_{i=1}^k\gamma_i)\\
&=(\hat N p)_{\{\sigma_1,\cdots,\sigma
_k\}}(\{\gamma_1,\cdots,\gamma_k\}).
\end{aligned}
$$
}

\begin{lem}\label{lem:push-typeI}
The push-forward {empirical model} $\pi_{\ast}(p)$ in Definition \ref{def:push-along-T1} belongs to $\bEmp(f^\pi)$. Moreover, for $\pi':\Sigma'' \to \hat N\Sigma'$, we have $(\pi \kleisli \pi')_\ast p={\pi'}_\ast(\pi_\ast p)$. 
\end{lem}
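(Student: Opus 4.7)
My plan is to prove the two parts separately. For the first part (well-definedness of $\pi_\ast p$), I observe that $\hat N f$ is a bundle scenario by Proposition \ref{pro:N-bundle-sce} and $f^\pi$ is a bundle scenario by Proposition \ref{pro:pull-back-bundle}. Since both maps are discrete over vertices, the pull-back description in Proposition \ref{pro:pull-back} applied to $\hat N f$ along $\pi$ implies that $l$ restricts to a bijection $(f^\pi)^{-1}(\sigma') \to (\hat N f)^{-1}(\pi(\sigma'))$ for each simplex $\sigma' \in \Sigma'$. Transporting the probability distribution $(\hat N p)_{\pi(\sigma')}$ through this bijection shows $(\pi_\ast p)_{\sigma'}$ sums to $1$, so it is a well-defined distribution on $(f^\pi)^{-1}(\sigma')$. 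For compatibility under $\sigma_1' \subset \sigma_2'$, I note that $\pi$ preserves inclusion and that the simplicial complex map $l$ intertwines the restriction maps $r_{-,-}$ of Remark \ref{Improppp} on the two sides (by uniqueness of the restriction in a bundle scenario). The compatibility $(\pi_\ast p)_{\sigma_2'}|_{\sigma_1'} = (\pi_\ast p)_{\sigma_1'}$ then follows from the compatibility of $\hat N p$ established in Lemma \ref{lem:hat-N-p}.

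For the composition identity $(\pi \kleisli \pi')_\ast p = \pi'_\ast(\pi_\ast p)$, my plan is to compute both sides using the functor form of Definition \ref{def:push-along-T1}, namely $(\pi_\ast p)_{\sigma'}(\gamma') = p_{\overline{\pi}(\sigma')}(\overline{l}(\gamma'))$. Writing $l_{\pi'}(\gamma'') = \{\gamma_1', \ldots, \gamma_n'\}$ with $\gamma_i' \in \Gamma'$, so that $\overline{l_{\pi'}}(\gamma'') = \bigcup_i \gamma_i'$ as a simplex of $\Gamma'$, the right-hand side unpacks as $(\pi'_\ast(\pi_\ast p))_{\sigma''}(\gamma'') = (\pi_\ast p)_{\overline{\pi'}(\sigma'')}(\bigcup_i \gamma_i') = p_{\overline{\pi}(\overline{\pi'}(\sigma''))}(\overline{l}(\bigcup_i \gamma_i'))$. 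For the left-hand side, Lemma \ref{lem:pull-hat-N} identifies $(\pi \kleisli \pi')^\ast(\hat N \Gamma)$ with $(\pi')^\ast(\hat N \Gamma')$ and gives the projection $l_{\pi \kleisli \pi'} = \mu_\Gamma \circ \hat N l \circ l_{\pi'}$. Since $l$ is a simplicial complex map, the associated functor satisfies $\overline{l}(\bigcup_i \gamma_i') = \bigcup_i \overline{l}(\gamma_i')$, and a direct computation together with Equation (\ref{eq:pibarComp}) reduces the left-hand side to $p_{\overline{\pi}(\overline{\pi'}(\sigma''))}(\overline{l}(\bigcup_i \gamma_i'))$, matching the right-hand side.

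The main technical point will be carefully tracking the pull-back identifications under Kleisli composition. The key tools are Lemma \ref{lem:pull-hat-N} (showing how pull-backs stack across the monad $\hat N$), Equation (\ref{eq:pibarComp}) (functoriality of the bar operation), and the observation that simplicial complex maps commute with unions of simplices. Once these identifications are in hand, both parts of the lemma reduce to bookkeeping with Equation (\ref{eq:hatNp}) and the defining formula for $\pi_\ast$.
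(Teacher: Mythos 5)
Your proposal is correct and follows essentially the same route as the paper: normalization and compatibility of $\pi_\ast p$ are obtained by transporting $(\hat N p)_{\pi(\sigma')}$ through the fiber bijection induced by $l$ (Proposition \ref{pro:pull-back}) and invoking Lemma \ref{lem:hat-N-p}, while the composition identity is checked pointwise via the functor form $(\pi_\ast p)_{\sigma'}(\gamma')=p_{\overline{\pi}(\sigma')}(\overline{l}(\gamma'))$ together with Lemma \ref{lem:pull-hat-N} and Equation (\ref{eq:pibarComp}). The paper's argument is just a terser version of the same bookkeeping, writing $\overline{l}_{f,\pi\kleisli\pi'}=\overline{l}_f\circ\overline{l}_{f^\pi}$ directly.
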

\Proof{ 
{Let $\sigma' \in \Sigma'$ and $\gamma' \in (f^{\pi})^{-1}(\sigma')$.} Observe that $\pi(\sigma')=\pi
(f^{\pi}(\gamma'))=\hat N f(l(\gamma'))$. Thus $l(\gamma')\in (\hat N f)^{-1}(\pi(\sigma'))$. In addition, 
$$
\sum_{\gamma' \in (f^{\pi})^{-1}(\sigma')}(\pi_{\ast}p)_{\sigma'}(\gamma')=
\sum_{\gamma' \in (f^{\pi})^{-1}(\sigma')} (\hat N p)_{\pi(\sigma')}(l(\gamma')).
$$
By 
%part $2$ of 
Proposition \ref{pro:pull-back} this is equal to 
$\sum_{\gamma \in (\hat Nf)^{-1}(\pi(\sigma'))}(\hat N p)_{\pi(\sigma')}(\gamma)$, which is equal to $1$ according to Lemma \ref{lem:hat-N-p}. 
Now, given $\sigma'_1 \subseteq  \sigma'$ in $\Sigma'$ and $\gamma'_1 \in 
(f^{\pi})^{-1}(\sigma'_1)$, we have  
%
%$
%(\pi_{\ast}p)_{\sigma'}|_{{\sigma'_1}}(\gamma'_1)=(\pi_{\ast}p)_{\sigma'_1}(\gamma'_1)
%$  
%
$$
\begin{aligned}
(\pi_{\ast}p)_{\sigma'}|_{\sigma'_1}(\gamma_1')
&=\sum_{\gamma_1' \subseteq \gamma' \in (f^{\pi})^{-1}(\sigma')}(\pi_{\ast}p)_{\sigma'}(\gamma') \\
&=\sum_{\gamma_1' \subseteq \gamma' \in (f^{\pi})^{-1}(\sigma')}(\hat N p)_{\pi(\sigma')}(l(\gamma'))\\
&=
\sum_{l(\gamma'_1) \subseteq \gamma \in (\hat N f)^{-1}(\pi(\sigma'))}(\hat N p)_{\pi(\sigma')}(\gamma)\\
&= (\hat N p)_{\pi(\sigma')}|_{\pi(\sigma_1')}(l(\gamma_1'))\\
&=
 (\hat N p)_{\pi(\sigma_1')}(l(\gamma'_1))=
(\pi_{\ast}p)_{\sigma'_1}(\gamma'_1).
\end{aligned}
$$
Now, given $\sigma \in \Sigma''$ and $\gamma \in \left((f^{\pi})^{\pi'}\right)^{-1}(\sigma)$, we have
$$
\begin{aligned}
\left({\pi'}_{\ast}(\pi_\ast p)\right)_{\sigma}(\gamma) &=
(\pi_{\ast}p)_{\overline{\pi'}(\sigma)}(\overline{l}_{f^{\pi}}(\gamma))\\
&=
p_{\overline{\pi}(\overline{\pi'}(\sigma))}\left(\overline{l}_f(\overline{l}_{f^{\pi}}(\gamma))\right)\\
&=p_{\overline{\pi \kleisli \pi'}(\sigma)}
(\overline{l}_{f,\pi \kleisli \pi'}(\gamma))\\
&=\left((\pi \kleisli \pi')_\ast p\right)_{\sigma}(\gamma).
\end{aligned}
$$
}

\begin{lem}\label{lem:push-typeII}
The push-forward {empirical model}  $\alpha_{\ast}p$ in Definition \ref{def:push-along-T2} belongs to $\bEmp(g)$. Moreover, given another type II morphism $\alpha': g \to h$, we have $(\alpha' \circ \alpha)_\ast p={\alpha'}_\ast(\alpha_\ast p)$.
\end{lem}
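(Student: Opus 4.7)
The plan is to verify the two claims separately, leveraging functoriality of $D_R$ and the key property that a simplicial complex map discrete over vertices admits unique restrictions (Remark~\ref{Improppp}).

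First I would check that $(\alpha_*p)_\sigma$ genuinely lives in $D_R(g^{-1}(\sigma))$. Since the type~II morphism satisfies $g \circ \alpha = f$, the restriction $\alpha|_{f^{-1}(\sigma)}$ factors through $g^{-1}(\sigma)$, so pushing $p_\sigma$ forward under $D_R$ of this restriction lands in $D_R(g^{-1}(\sigma))$ as required.

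The main step is the compatibility condition $(\alpha_*p)_\sigma|_{\sigma'} = (\alpha_*p)_{\sigma'}$ for $\sigma' \subset \sigma$ in $\Sigma$. Unfolding the definitions, this reduces to proving that the square
\[
\begin{tikzcd}[column sep=large]
f^{-1}(\sigma) \arrow[r,"\alpha|_{f^{-1}(\sigma)}"] \arrow[d,"r^f_{\sigma{,}\sigma'}"'] & g^{-1}(\sigma) \arrow[d,"r^g_{\sigma{,}\sigma'}"] \\
f^{-1}(\sigma') \arrow[r,"\alpha|_{f^{-1}(\sigma')}"'] & g^{-1}(\sigma')
\end{tikzcd}
\]
commutes, after which I apply $D_R$ to both composites and use that $p_\sigma|_{\sigma'} = D_R(r^f_{\sigma,\sigma'})(p_\sigma)$ by definition. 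To verify commutativity, take $\gamma \in f^{-1}(\sigma)$ and let $\gamma' = r^f_{\sigma,\sigma'}(\gamma)$, i.e., the unique subsimplex of $\gamma$ with $f(\gamma') = \sigma'$. Then $\alpha(\gamma') \subset \alpha(\gamma)$ and $g(\alpha(\gamma')) = f(\gamma') = \sigma'$; since $g$ is discrete over vertices (being a bundle scenario), Remark~\ref{Improppp} gives that $\alpha(\gamma')$ is the unique such subsimplex of $\alpha(\gamma)$, hence $\alpha(\gamma') = r^g_{\sigma,\sigma'}(\alpha(\gamma))$. This is the most delicate point, as it is exactly where the hypothesis that $g$ is discrete over vertices is used.

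For the second claim, I would simply compute
\[
(\alpha'_* (\alpha_* p))_\sigma
= D_R(\alpha'|_{g^{-1}(\sigma)})\bigl(D_R(\alpha|_{f^{-1}(\sigma)})(p_\sigma)\bigr)
= D_R\bigl((\alpha' \circ \alpha)|_{f^{-1}(\sigma)}\bigr)(p_\sigma)
= ((\alpha' \circ \alpha)_* p)_\sigma,
\]
using functoriality of $D_R$ together with the obvious identity $\alpha'|_{g^{-1}(\sigma)} \circ \alpha|_{f^{-1}(\sigma)} = (\alpha' \circ \alpha)|_{f^{-1}(\sigma)}$, which holds because $g \circ \alpha = f$ ensures $\alpha$ maps $f^{-1}(\sigma)$ into $g^{-1}(\sigma)$. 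This completes the argument.
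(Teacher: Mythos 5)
Your proposal is correct and follows essentially the same route as the paper: both parts rest on the identity $r_{\sigma,\sigma'}\circ \alpha|_{f^{-1}(\sigma)}=\alpha|_{f^{-1}(\sigma')}\circ r_{\sigma,\sigma'}$ together with functoriality of $D_R$, exactly as in the paper's proof. If anything, you supply slightly more detail than the paper at the key step, since the paper merely asserts the commuting square while you justify it via Remark \ref{Improppp} and the fact that $g$ is discrete over vertices.
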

\Proof{
%\ak{[[I added a proof]]}
%Note that b
Because of the commutativity of Diagram $(\ref{AAlpDiag})$, the image of $\alpha|_{f^{-1}(\sigma)}$ lies in $g^{-1}(\sigma)$.
Thus $(\alpha_{\ast}p)_{\sigma} \in D_R(g^{-1}(\sigma))$. Now, we prove the compatibility. Given 
$\sigma_1 \subseteq \sigma$ in $\Sigma$. Using the fact that 
$r_{\sigma,\sigma_1}\circ \alpha|_{f^{-1}(\sigma)}=\alpha|_{f^{-1}(\sigma_1)}\circ r_{\sigma,\sigma_1}$ 
we obtain  
$$
\begin{aligned}
(\alpha_{\ast}p)_{\sigma}|_{\sigma_1}
&=
D_R(r_{\sigma,\sigma_1})(D_R(\alpha|_{f^{-1}(\sigma)})(p_{\sigma}))\\
&=
D_R(r_{\sigma,\sigma_1}\circ \alpha|_{f^{-1}(\sigma)})(p_{\sigma})\\
&=
D_R(\alpha|_{f^{-1}(\sigma_1)}\circ r_{\sigma,\sigma_1})(p_{\sigma})\\
&=D_R(\alpha|_{f^{-1}(\sigma_1)}) (D_R\left(r_{\sigma,\sigma_1})(p_{\sigma})\right)
\\
&=D_R(\alpha|_{f^{-1}(\sigma_1)}) (p_{\sigma}|_{\sigma_1})
\\
&=D_R(\alpha|_{f^{-1}(\sigma_1)}) (p_{\sigma_1})= (\alpha_{\ast}p)_{\sigma_1}.
\end{aligned}
$$
For the second part of the lemma, we have
$$
\begin{aligned}
(\alpha'\circ \alpha)_{\ast}(p)_{\sigma}
&=
D_R(\alpha' \circ \alpha |_{f^{-1}(\sigma)})(p_\sigma)\\
&=D_R(\alpha' |_{g^{-1}(\sigma)}\circ \alpha|_{f^{-1}(\sigma)})(p_\sigma) \\
&=D_R(\alpha' |_{g^{-1}(\sigma)})\left(D_R(\alpha |_{f^{-1}(\sigma)})(p_\sigma)\right)\\
&=(\alpha'_{\ast}
(\alpha_\ast p))_{\sigma}.
\end{aligned}
$$
}

\begin{lem}\label{ReplaabEmp}
Given a type II morphism $\alpha:f \to g$, a simplicial complex map 
$\pi: \Sigma' \to \hat N \Sigma$, and an {empirical model} $p \in bEmp(f)$, we have 
$$
\pi_{\ast}(\alpha_{\ast}p)=\left(\pi^{\ast}(\hat N \alpha)\right)_{\ast}(\pi_{\ast}p).
$$
\end{lem}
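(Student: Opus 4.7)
The plan is to evaluate both empirical models at an arbitrary pair $(\sigma', \gamma')$ with $\gamma' \in (g^\pi)^{-1}(\sigma')$ and check equality pointwise. Using the pull-back description of Proposition \ref{pro:pull-back}, I write $\sigma' = \{x'_1,\ldots,x'_k\}$ and $\gamma' = \{(e'_1, x'_1),\ldots,(e'_k, x'_k)\}$, so that $g(e'_i) = \pi(x'_i)$ and $\cup_i e'_i \in \Gamma'$.

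First I would unfold the left-hand side by applying, in succession, Definition \ref{def:push-along-T1} (for $\pi_*$), Definition \ref{def:hatNp} (for $\hat N$ of a distribution), and Definition \ref{def:push-along-T2} (for $\alpha_*$). The result is
$$
(\pi_*(\alpha_* p))_{\sigma'}(\gamma') \;=\; \sum_{\gamma \in f^{-1}(\overline\pi(\sigma')),\; \alpha(\gamma) = \cup_i e'_i} p_{\overline\pi(\sigma')}(\gamma).
$$
Next I would unfold the right-hand side. A simplex $\tilde\gamma \in (f^\pi)^{-1}(\sigma')$ with $\pi^\ast(\hat N\alpha)(\tilde\gamma) = \gamma'$ has the form $\tilde\gamma = \{(e_1, x'_1),\ldots,(e_k, x'_k)\}$ with $f(e_i) = \pi(x'_i)$, $\cup_i e_i \in \Gamma$, and $\alpha(e_i) = e'_i$. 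Applying Definitions \ref{def:push-along-T2}, \ref{def:push-along-T1} and \ref{def:hatNp} in that order gives
$$
\bigl((\pi^\ast(\hat N\alpha))_*(\pi_* p)\bigr)_{\sigma'}(\gamma') \;=\; \sum_{(e_1,\ldots,e_k)} p_{\overline\pi(\sigma')}(\cup_i e_i),
$$
where the sum ranges over tuples satisfying the three conditions above.

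The remaining step, and the only one requiring actual argument, is to exhibit a bijection between the index sets of the two sums. Given $\gamma$ from the left-hand sum, I would take $e_i = r_{\overline\pi(\sigma'),\pi(x'_i)}(\gamma)$, the unique sub-simplex of $\gamma$ whose image under $f$ is $\pi(x'_i)$ (existence and uniqueness by Remark \ref{Improppp}, using that $f$ is discrete over vertices). The same discreteness forces $f|_\gamma$ to be injective on vertices, hence $\gamma = \cup_i e_i$. To verify $\alpha(e_i) = e'_i$, observe that both $\alpha(e_i)$ and $e'_i$ lie in the simplex $\cup_j e'_j \in \Gamma'$ and both map under $g$ to $\pi(x'_i)$, so they coincide since $g$ is discrete over vertices. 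Conversely, a tuple $(e_1,\ldots,e_k)$ reassembles into $\gamma = \cup_i e_i \in f^{-1}(\overline\pi(\sigma'))$ with $\alpha(\gamma) = \cup_i e'_i$. Under this bijection the summands agree, completing the proof. I do not anticipate any serious obstacle beyond tracking the definitions carefully through the pull-back identifications; the ``discrete over vertices'' condition on both $f$ and $g$ is doing all the real work.
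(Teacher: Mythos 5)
Your proposal is correct and follows essentially the same route as the paper's proof: both sides are evaluated pointwise, unfolded via Definitions \ref{def:push-along-T1}, \ref{def:push-along-T2} and \ref{def:hatNp} into sums over fibers, and the equality is reduced to a bijection between the two index sets under which $\tau$ corresponds to $\cup_i e_i$. The only difference is presentational: where the paper justifies this bijection by appealing to the universal property of the pull-back, you spell it out concretely via Remark \ref{Improppp} and the discreteness over vertices of $f$ and $g$, which is a legitimate (arguably more self-contained) way to supply the same step.
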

\Proof{
%\ak{[[I added a proof]]}
Given $\sigma' \in \Sigma'$ and $\tilde{\tau} \in (g^{\pi})^{-1}(\sigma')$, we have
$$
\begin{aligned}
\pi_{\ast}(\alpha_\ast p)_{\sigma'}(\tilde{\tau})
&=(\alpha_{\ast} p)_{\overline{\pi}(\sigma')}(\overline{l}_{g}(\tilde{\tau})) \\
&=D_R(\alpha|_{f^{-1}(\overline{\pi}(\sigma'))})(p_{\overline{\pi}(\sigma')})(\overline{l}_{g}(\tilde{\tau}))\\
&=\sum_{\gamma:\, \alpha(\gamma)=\overline{l}_{g}(\tilde{\tau})}p_{\overline{\pi}(\sigma')}(\gamma).
\end{aligned}
$$
On the other hand, 
$$
\begin{aligned}
(\left(\pi^{\ast}(\hat N \alpha)\right)_{\ast}(\pi_{\ast}p))_{\sigma'}(\tilde{\tau})&=
D_R(\pi^{\ast}(\hat N \alpha)|_{(f^{\pi})^{-1}(\sigma')})\left((\pi_{\ast}p)_{\sigma'}\right)(\tilde{\tau})\\
&=\sum_{\tau:\,\pi^{\ast}(\hat N \alpha)(\tau)=\tilde{\tau}}
(\pi_{\ast}p)_{\sigma'}(\tau)\\
&=\sum_{\tau:\,\pi^{\ast}(\hat N \alpha)(\tau)=\tilde{\tau}}p_{\overline{\pi}(\sigma')}(\overline{l}_f(\tau)).
\end{aligned}
$$
Observe that if $\tau \in \pi^{\ast}(\hat N \Gamma)$ such that $\pi^{\ast}(\hat N \alpha)(\tau)=\tilde{\tau}$ then $\overline{l}_f(\tau) \in \Gamma$ and $\alpha(\overline{l}_f(\tau))=\overline{l}_{g}(\pi^{\ast}(\hat N \alpha)(\tau))=
\overline{l}_{g}(\tilde{\tau}))$. In fact, using the universal property of pullbacks, one can see that the assignment $\tau \mapsto \overline{l}_f(\tau)$ gives a one-to-one corresponding between $\{\tau:\,\pi^{\ast}(\hat N \alpha)(\tau)=\tilde{\tau}\}$ and $\{\gamma:\, \alpha(\gamma)=\overline{l}_{g}(\tilde{\tau})\}$.
}

\section{Simplicial sets}\label{sec:simplicial-sets}
\label{sec:simplicial-sets}

{Simplicial sets are combinatorial models of spaces which have better expressive power than simplicial complexes. In this section we give basic definitions and introduce the simplicial set representing the standard topological simplex.}

\Def{\label{def:simp-set}
{\rm
A {\it simplicial set} is a functor $X:\catDelta^\op \to \catSet$. Explicitly, a simplicial set consists of 
the following data:
\begin{itemize}
\item A sequence of sets  $X_0,X_1,\cdots,X_n,\cdots$ for $n\geq 0$ where
each $X_n$ represents the set of $n$-simplices.
\item Face maps
$$
d_i:X_n \to X_{n-1}\;\;\;\text{ for } 0\leq i\leq n
$$
representing the faces of a given simplex.
\item Degeneracy maps
$$
s_j:X_n \to X_{n+1}\;\;\;\text{ for } 0\leq j\leq n
$$
representing the degenerate simplices.
\end{itemize}
The face and the degeneracy maps are subject to the  simplicial identities \cite{goerss2009simplicial}.
A map $f:X\to Z$ of simplicial sets consists of a sequence of functions $f_n:X_n\to Z_n$, where $n\geq 0$, compatible with the face and the degeneracy  maps in the sense that
$$
d_i f_n(\sigma) = f_{n-1}(d_i \sigma)\;\; \text{ and } \;\;s_j f_n(\sigma) = f_{n+1}(s_j \sigma)
$$
for all $0\leq i,j\leq n$ and $\sigma\in X_n$.
}
}

%\Nota{\label{nota:fsigma}
%For a simplicial set map $f:X\to Y$ and an $n$-simplex $\sigma\in X$ we will write $f_\sigma=f_n(\sigma)$. In this notation compatibility requirement with the face and the degeneracy maps becomes
% $$
%d_i f_\sigma = f_{d_i \sigma}\;\; \text{ and } \;\;s_j f_\sigma = f_{s_j \sigma}.
%$$
%}

\Ex{\label{ex:d-simplex}
The $d$-simplex $\Delta[d]$ is the simplicial set whose set $n$-simplices are given by 
$$
\set{\sigma^{a_0\cdots a_n}:\, 0\leq a_0\leq \cdots \leq a_d\leq n,\,a_i \in \mathbb{Z}}.
$$
The face maps deletes an index: $d_i(\sigma^{a_0\cdots a_n})= \sigma^{a_0\cdots a_{i-1}a_{i+1}\cdots a_n}$, and the degeneracy maps copies an index: $s_j(\sigma^{a_0\cdots a_n})= \sigma^{a_0\cdots a_{j}a_{j}\cdots a_n}$. The simplex $\sigma^{01\cdots d}$ is the generating simplex of the simplicial set in the sense that any other simplex can be obtained by applying a sequence of face and degeneracy maps to this simplex.
}

\subsection{The nerve {space}}
\label{sec:nerve-space}

{There is a simplicial set version of the nerve complex $\hat N (\Sigma)$ construction which we simply denote by $N(\Sigma)$.
% to be able to distinguish the two constructions. 
In this section we introduce the nerve space associated to a simplicial complex and study maps between two such simplicial sets.}

\Def{\label{def:N}
{\rm
Let $ N:s\catComp \to \catsSet$ denote the functor that sends a simplicial complex $\Sigma$ to a simplicial set $N\Sigma$, called the {\it nerve space} of $\Sigma$, whose $n$-simplices are given by
$$
(N\Sigma)_n = \set{(\sigma_1,\sigma_2,\cdots,\sigma_n)\in \Sigma^n:\, \cup_{i=1}^n \sigma_i \in \Sigma}.
$$
where $(N\Sigma)_0=\set{()}$.
The simplicial structure maps are given by
$$
d_i(\sigma_1,\sigma_2,\cdots,\sigma_n) = \left \lbrace
\begin{array}{ll}
(\sigma_2,\cdots,\sigma_n) & i=0 \\
(\sigma_1,\cdots,\sigma_i\cup \sigma_{i+1},\cdots,\sigma_n) & 0<i<n \\
(\sigma_1,\cdots,\sigma_{n-1}) & i=n
\end{array}
\right. 
$$
and
$$
s_j(\sigma_1,\sigma_2,\cdots,\sigma_n) = (\sigma_1,\cdots,\sigma_{j},\emptyset,\sigma_{j+1},\cdots,\sigma_n)\;\;\; 0\leq j\leq n. 
$$
A simplicial complex map $f:\Gamma\to \Sigma$ induces a simplicial set map between the nerves $Nf: N\Gamma \to N\Sigma$ defined in degree $n$ by 
$$
(Nf)_n(\gamma_1,\gamma_2,\cdots,\gamma_n) = (f(\gamma_1),f(\gamma_2),\cdots,f(\gamma_n)).
$$
}}

\begin{pro}\label{pro:fNtoN}
Given simplicial complexes 
$\Gamma$ and $\Sigma$, a simplicial set map 
$f: N\Gamma \to N \Sigma$ between the nerves satisfies the following properties:
\begin{enumerate}
\item  
$
f_n(\gamma_1,\cdots,\gamma_n)=(f_1(\gamma_1),\cdots,f_1(\gamma_n))
$
for every $(\gamma_1,\cdots,\gamma_n)\in (N\Gamma)_n$.
\item $f_{1}(\gamma)=\cup_{i=1}^n f_1({x_i})$ 
for every $\gamma=\{x_1,\cdots,x_n\} \in \Gamma$.
\end{enumerate} 
\end{pro}
\Proof{
%\ak{[[I added a proof]]}   
Part 1: Given $1\leq i \leq n$, let $d^\Gamma: (N\Gamma)_n \to (N\Gamma)_{1}$ denote the composition of $i-1$ times $d_0$ with $(n-i)$ times $d_n$, i.e., $d^\Gamma(\gamma_1,\cdots,\gamma_n)=\gamma_i$. Similarly, we define $d^\Sigma$. Since $f$ respects the simplicial structure we have $d^{\Sigma}(f_n(\gamma_1,\cdots,\gamma_n))=f_1(d^{\Gamma}(\gamma_1,\cdots,\gamma_n))=f_1(\gamma_i)$.

Part 2: Given $\gamma=\{x_1,\cdots,x_n\}\in \Gamma$, we have $(\{x_1\},\cdots,\{x_n\})\in (N\Gamma)_n$. Using part 1 and the fact that $f$ respects the simplicial structure we obtain that 
$$
\begin{aligned}
f_1(\gamma)&=f_1(\{x_1,\cdots,x_n\})\\
&=f_1\left(d_1\circ \cdots \circ d_1(\{x_1\},\cdots,\{x_n\})\right)\\
&=d_1\circ \cdots \circ d_1\left(f_n(\{x_1\},\cdots,\{x_n\})\right)\\
&=d_1\circ \cdots \circ d_1(f_1(x_1),\cdots,f_1(x_n))\\
&=\cup_{i=1}^n f_1(x_i).
\end{aligned}
$$}

\subsection{{Lemmas: Category of simplicial scenarios}}
\label{sec:lem-categoryof-simplicial-scen}

{Similar to the $\hat N$ construction its simplicial set version $N$ also preserves bundle scenarios. In this section we prove the two parts that go into the proof, that is, this construction preserves both local surjectivity and being discrete over vertices.} 

\Lem{\label{lem:nerve-S-LS} 
If $f:\Gamma \to \Sigma$ is (locally) surjective   then $Nf:N\Gamma \to N\Sigma$ is (locally) surjective. 
}
\begin{proof} 
Assume that $f$ is surjective and let $(\sigma_1,\cdots,\sigma_n)\in (N \Sigma)_n$. There exists $\gamma \in \Gamma$ such that $f(\gamma)=\cup_{i=1}^n\sigma_i \in \Sigma$ and we can choose $\gamma_1,\cdots,\gamma_n \subset \gamma$ such that 
$f(\gamma_i)=\sigma_i$. 
This means that $(\gamma_1,\cdots,\gamma_n)\in (N\Gamma)_n$ and $(Nf)_n(\gamma_1,\cdots,\gamma_n)=(\sigma_1,\cdots,\sigma_n)$.

Let us write $\tau=\sigma^{01\cdots (n-1)}$ and $\sigma=\sigma^{01\cdots n}$ for the generating simplices of $\Delta[n-1]$ and $\Delta[n]$, respectively; see Example \ref{ex:d-simplex}.
Now, assume that $f$ is locally surjective and consider the following commutative diagram:
\begin{equation}
\label{DDiag20}
\begin{tikzcd}[column sep=huge,row sep=large]
\Delta[n-1]  \arrow[r,"\tilde \alpha"]
 \arrow[d,hook, "d^i"'] & N\Gamma 
 \arrow[d,"Nf"] \\
\Delta[n] \arrow[ru,dashed,"\beta"] \arrow[r,"\alpha"] & N\Sigma 
\end{tikzcd}
\end{equation}
where $\alpha_{\sigma}=(\sigma_1{,}\cdots{,}\sigma_n)$ 
and $\tilde \alpha_{\tau}=(\gamma_1{,}\cdots{,}\gamma_{n-1})$.
If $0<i<n$ then 
$$
(\sigma_1,\cdots,\sigma_i\cup \sigma_{i+1},\cdots,\sigma_n)=(f(\gamma_1),\cdots,f(\gamma_{n-1})).
$$
where
%Observe that 
$f(\gamma_i)=\sigma_i\cup \sigma_{i+1}$.
Let 
%Hence there exists 
$\tilde{\gamma}_i,\tilde{\gamma}_{i+1}\subset \gamma_i$ be such 
that $f(\tilde{\gamma}_i)=\sigma_i$, $f(\tilde{\gamma}_{i+1})=\sigma_{i+1}$ and
$\tilde{\gamma}_i \cup \tilde{\gamma}_{i+1}= \gamma_i$. Thus we have the $n$-simplex $(\gamma_1,\cdots,
\tilde{\gamma}_i,\tilde{\gamma}_{i+1},\cdots,\gamma_{n-1}) \in (N\Gamma)_n$. 
Defining $\beta_{\sigma} = (\gamma_1,\cdots,
\tilde{\gamma}_i,\tilde{\gamma}_{i+1},\cdots,\gamma_{n-1})$ gives a lifting for Diagram (\ref{DDiag20}):
$$
(\beta\circ d^i)_{\tau}=(\gamma_1,\cdots,
\tilde{\gamma}_i\cup \tilde{\gamma}_{i+1},\cdots,\gamma_{n-1})=(\gamma_1,\cdots,\gamma_{n-1})
$$
and
$$
(Nf\circ \beta)_{\sigma} = (f(\gamma_1),\cdots,
f(\tilde{\gamma}_i),f(\tilde{\gamma}_{i+1}),\cdots,f(\gamma_{n-1}))=
(\sigma_1,\cdots,\sigma_n).
$$
Next, let $i=n$. We have 
$$
(\sigma_1,\cdots,\sigma_{n-1})=(f(\gamma_1),\cdots,f(\gamma_{n-1})).
$$
In particular, $f(\cup_{i=1}^{n-1}\gamma_i)=\cup_{i=1}^{n-1}\sigma_i$ and $\cup_{i=1}^{n}\sigma_i\in \St(\cup_{i=1}^{n-1}\sigma_i)$. 
Note that since $f$ is  locally surjective, there exists $\gamma \in \St(\cup_{i=1}^{n-1}\gamma_i)$ such that $f(\gamma)=\cup_{i=1}^{n}\sigma_i$. Therefore, we have $\gamma_n \subseteq \gamma$ with $f(\gamma_n)=\sigma_n$ and defining $\beta_\sigma=(\gamma_1,\cdots,\gamma_{n-1},\gamma_n) $ is a lifting for Diagram (\ref{DDiag20}). A similar argument works for the case of $i=0$. 
\end{proof}

%The property of being discrete over vertices holds for the nerve of any map of simplicial complexes.

\Lem{\label{lem:nerve-pre-dv}  
The map $Nf:N\Gamma \to N\Sigma$ is discrete over vertices for every simplicial complex map $f:\Gamma \to \Sigma$. 
}
\begin{proof}
%See Section \ref{sec:nerve-space}.
%\end{proof}
%\begin{proof}[{\bf Proof of Lemma \ref{lem:nerve-pre-dv}}] 
%\ak{[[I added a proof]]}
As in the proof of Lemma \ref{lem:nerve-S-LS} let us write $\tau$ and $\sigma$ for the generating simplices of the $(n-1)$-simplex and the $n$-simplex.
Consider the following commutative diagram
\begin{equation}
\label{DDiag21}
\begin{tikzcd}[column sep=huge,row sep=large]
\Delta[n]  \arrow[r,"\tilde\alpha"]
 \arrow[d,twoheadrightarrow, "s^i"'] & N\Gamma 
 \arrow[d,"Nf"] \\
\Delta[n-1] \arrow[r,"\alpha"] \arrow[ru,dashed,"\beta"] & N\Sigma 
\end{tikzcd}
\end{equation}
where $\alpha_\tau = (\sigma_1{,}\cdots{,}\sigma_{n-1})$ and $\tilde \alpha_\sigma=(\gamma_1{,}\cdots{,}\gamma_{n})$, which are related by
$$
(f(\gamma_1),\cdots,f(\gamma_n))= (\sigma_1,\cdots,\sigma_{i},\emptyset,\sigma_{i+1},\cdots,\sigma_{n-1}). 
$$
In particular, $f(\gamma_i)=\emptyset$ and hence $\gamma_i=\emptyset$. 
We will prove that $\beta_\tau = (\gamma_1,\cdots,\gamma_{i-1},\gamma_{i+1},\cdots,\gamma_n)$ 
gives a lifting for  Diagram (\ref{DDiag21}): We have 
$$
(\beta \circ s^i)_\sigma =(\gamma_1,\cdots,\emptyset,\cdots,\gamma_n)
=(\gamma_1,\cdots,\gamma_i,\cdots,\gamma_n)
$$
and
$$
(Nf\circ \beta)_\tau=(f(\gamma_1),\cdots,f(\gamma_{i-1}),f(\gamma_{i+1}),\cdots,f(\gamma_n))  
=(\sigma_1,\cdots,\sigma_{n-1}) .
$$  
\end{proof}

\subsection{{Lemmas: Embedding bundle scenarios into simplicial scenarios}}
\label{sec:lem-embedding-into-simplicial-scen}

{
The nerve space construction $N$, which preserves bundle scenarios, in fact gives a functor $N:\catbScen \to \catsScen$. To be able to prove this we need to show that applying $N$ to a morphism between two bundle scenarios produces a morphism between the associated simplicial bundle scenarios. Our main tool is a natural transformation $N\hat N\to N$.
}

%\label{def:T-pi}
\begin{defn}\label{def:tilde-mu}
Given a simplicial complex $\Sigma$, we define a natural map $\tilde{\mu}_{\Sigma}:N\hat N \Sigma \to N \Sigma$ by 
%
%$$
%(\tilde{\mu}_{\Sigma})_n
%(\{\sigma_{11},\cdots,\sigma_{1m_1}\},\cdots,
%\{\sigma_{n1},\cdots,\sigma_{nm_n}\})=(\cup_{i=1}^{m_1}\sigma_{1i},\cdots,\cup_{i=1}^{m_n}\sigma_{ni})
%$$
%
%where $(\{\sigma_{11},\cdots,\sigma_{1m_1}\},\cdots,
%\{\sigma_{n1},\cdots,\sigma_{nm_n}\}) \in (N \hat N \Sigma)_n$.
%
$$
(\tilde{\mu}_{\Sigma})_n
(\{\sigma_{11},\cdots,\sigma_{1m_1}\},\cdots,
\{\sigma_{n1},\cdots,\sigma_{nm_n}\})=(\cup_{i=1}^{m_1}\sigma_{1i},\cdots,\cup_{i=1}^{m_n}\sigma_{ni})
$$
where $(\{\sigma_{11},\cdots,\sigma_{1m_1}\},\cdots,
\{\sigma_{n1},\cdots,\sigma_{nm_n}\}) \in (N \hat N \Sigma)_n$.
\end{defn}

The map $T(\pi):N\Sigma'\to N\Sigma$ given in  Definition \ref{def:T-pi} can be factored as
$$
T(\pi): N\Sigma' \xrightarrow{N\pi} N\hat N\Sigma \xrightarrow{\tilde\mu_\Sigma} N\Sigma 
$$ 
where $\tilde{\mu}_{\Sigma}$ is as in Definition \ref{def:tilde-mu}.
{Next we prove a sequence of results to be used in showing that the nerve space construction gives us a well-defined functor.}

\begin{lem}\label{lem:tildemupull} 
For a type I morphism given in Diagram (\ref{dia:typeI})
the composition of the following squares is a pull-back
\begin{equation}\label{DDiag7}
\begin{tikzcd}[column sep=huge,row sep=large]
 N \Gamma  
 \arrow[d,"N f"'] & \arrow[l,"\tilde{\mu}_{\Gamma}"'] N\hat N \Gamma  
 \arrow[d,"N \hat N f"'] &  N\pi^{\ast}(\hat N \Gamma) 
 \arrow[d," N f^\pi"] \arrow[l,"N l"']\\
 N\Sigma  & \arrow[l,"\tilde{\mu}_{\Sigma}"] 
N \hat N \Sigma  & N \Sigma' \arrow[l,"N \pi"]
\end{tikzcd}
\end{equation}  
In particular, ${T(\pi)^*(N\Gamma)}\cong N\pi^*(\hat N\Gamma)$.
\end{lem}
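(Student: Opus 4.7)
The plan is to show that $N\pi^*(\hat N\Gamma)$ represents the degreewise pullback $T(\pi)^*(N\Gamma)$ of $Nf$ along $T(\pi) = \tilde\mu_\Sigma \circ N\pi$. Since pullbacks in $\catsSet$ are computed degreewise, it suffices to build, for every $n\geq 0$, a natural bijection between the $n$-simplices of $N\pi^*(\hat N\Gamma)$ and the compatible pairs in $(N\Gamma)_n \times_{(N\Sigma)_n} (N\Sigma')_n$.

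First I would unpack both sides. An $n$-simplex of the set-theoretic pullback is a pair $((\gamma_1,\dots,\gamma_n),(\sigma'_1,\dots,\sigma'_n))$ with $(\gamma_i)_i\in (N\Gamma)_n$, $(\sigma'_i)_i\in (N\Sigma')_n$, and $f(\gamma_i)=\overline{\pi}(\sigma'_i)$ for every $i$. An $n$-simplex of $N\pi^*(\hat N\Gamma)$ is a tuple $(\tau_1,\dots,\tau_n)$ of simplices of $\pi^*(\hat N\Gamma)$ whose union lies in $\pi^*(\hat N\Gamma)$, and by Proposition \ref{pro:pull-back} each $\tau_i$ is determined by the pair $(l(\tau_i), f^\pi(\tau_i))$ satisfying $\hat Nf(l(\tau_i))=\pi(f^\pi(\tau_i))$. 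The canonical comparison map $\Phi=(\tilde\mu_\Gamma\circ Nl, Nf^\pi)$ induced by the outer rectangle sends $(\tau_1,\dots,\tau_n)$ to $\bigl((\mu_\Gamma(l(\tau_i)))_i,(f^\pi(\tau_i))_i\bigr)$, and this lands in the pullback because $f(\mu_\Gamma(l(\tau_i)))=\mu_\Sigma(\pi(f^\pi(\tau_i)))=\overline{\pi}(f^\pi(\tau_i))$.

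The heart of the argument is constructing an inverse $\Psi$. Given $((\gamma_i),(\sigma'_i))$ in the pullback, the identity $f(\gamma_i)=\overline{\pi}(\sigma'_i)=\bigcup_{y\in\sigma'_i}\pi(y)$ together with Remark \ref{Improppp} (applicable because the bundle scenario $f$ is discrete over vertices) yields, for each vertex $y\in\sigma'_i$, a unique sub-simplex $\gamma_{i,y}\subset \gamma_i$ of $\Gamma$ with $f(\gamma_{i,y})=\pi(y)$. I set $\tau_i:=\{(\gamma_{i,y},y):y\in\sigma'_i\}$, which is a simplex of $\pi^*(\hat N\Gamma)$ by Proposition \ref{pro:pull-back}, and verify that $\bigcup_i\tau_i$ is again a simplex of $\pi^*(\hat N\Gamma)$: its first coordinates form a simplex of $\hat N\Gamma$ because $\bigcup_{i,y}\gamma_{i,y}\subset \bigcup_i\gamma_i\in\Gamma$, and its second coordinates form $\bigcup_i\sigma'_i\in\Sigma'$. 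A consistency point requires care: whenever $y\in\sigma'_{i_1}\cap\sigma'_{i_2}$, the two candidates $\gamma_{i_1,y}$ and $\gamma_{i_2,y}$ must coincide; this follows from the uniqueness clause of Remark \ref{Improppp} applied inside the simplex $\bigcup_i\gamma_i\in\Gamma$.

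Finally, I would check $\Phi\circ\Psi=\idy$ using $\bigcup_{y\in\sigma'_i}\gamma_{i,y}=\gamma_i$ (which in turn uses that $f$ restricted to $\gamma_i$ is injective on vertices) and $\Psi\circ\Phi=\idy$ using the uniqueness clause of Remark \ref{Improppp} to identify $l(\tau_i)$ with $\{\gamma_{i,y}\}_y$. I would then verify that $\Psi$ commutes with face and degeneracy maps; the nontrivial case is the inner face $d_i$ for $0<i<n$, where the required equality reduces once more to the fact that $\gamma_{i,y}=\gamma_{i+1,y}$ when viewed inside $\gamma_i\cup\gamma_{i+1}$ for shared vertices $y\in\sigma'_i\cap\sigma'_{i+1}$. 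I expect the main obstacle to be exactly this coherence point: the entire construction rests on the discrete-over-vertices property of $f$ producing a canonical and globally consistent choice of lift $\gamma_{i,y}$, so that the indexed family of local sub-simplices glues into a single simplex of $\pi^*(\hat N\Gamma)$ and the assembly is natural with respect to all simplicial operations.
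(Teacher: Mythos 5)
Your proposal is correct and follows essentially the same route as the paper: both identify the degreewise pullback $N\Gamma\times_{N\Sigma}N\Sigma'$ and construct the inverse map by using the discrete-over-vertices property (Remark \ref{Improppp}) to uniquely decompose each $\gamma_i$ into sub-simplices $\gamma_{i,y}\subseteq\gamma_i$ with $f(\gamma_{i,y})=\pi(y)$ for $y\in\sigma'_i$, exactly as the paper's map $S$ does. The only difference is that you spell out the coherence check for shared vertices and the compatibility with face and degeneracy maps, which the paper compresses into ``Using Remark \ref{Improppp} one can see that the construction above gives a simplicial set map'' and ``one can check that the induced map is the inverse of $S$.''
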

\Proof{
%\ak{[[I added a proof]]} 
Let $N \Gamma \times_{N \Sigma} N\Sigma'$ be the  pull-back of the composite $\tilde{\mu}_{\Sigma}\circ N\pi$ along $Nf$. 
Given a simplex $\left((\gamma_1,\cdots,\gamma_n),(\sigma'_1,\cdots,\sigma'_n)\right)$ in $(N \Gamma \times_{N \Sigma} N\Sigma')_n$, we have
$f(\gamma_i)=
\overline{\pi}(\sigma'_i)$ for every $1\leq i \leq n$. 
Let $\sigma'_i=\{y_{i1},\cdots,y_{im_{i}}\}$. Then we have $f(\gamma_i)=\cup_{j=1}^{m_i}\pi(y_{ij})$. Recall that $f$ is discrete over vertices. Thus by Remark \ref{Improppp} there is unique $\gamma_{i1},\cdots, \gamma_{im_i}\subseteq \gamma_i$ such that 
$$
\cup_{j=1}^{m_i}\gamma_{ij}=\gamma_i, \,\, \text{and} \,\,f(\gamma_{ij})=\pi(y_{ij})
\,\, \forall 1\leq j \leq n.
$$
We conclude that $\{(\gamma_{i1},y_{i1}),\cdots,(\gamma_{im_i},y_{im_i})\} \in \pi^\ast(\hat N \Gamma)$ {(see Section \ref{subsec: Pullback})}. We define 
$$
S_n\left((\gamma_1,\cdots,\gamma_n),(\sigma'_1,\cdots,\sigma'_n)\right)
$$ 
to be
$
(\{(\gamma_{11},y_{11}),\cdots,(\gamma_{1m_1},y_{1m_1})\},\cdots,\{(\gamma_{n1},y_{n1}),\cdots,(\gamma_{n m_n},y_{n m_n})\})
\in N(\pi^{\ast}(\hat N \Gamma)).
$
Using Remark \ref{Improppp} one can see that the  construction above give us a simplicial set map 
$$S: N \Gamma \times_{N \Sigma} N\Sigma' \to N\pi^{\ast}(\hat N \Gamma).$$ 
In addition, we have
$$ 
\begin{aligned}
(Nf^{\pi})_n &\left(S_n \left((\gamma_1,\cdots,\gamma_n),(\sigma'_1,\cdots,\sigma'_n)\right) \right)\\
&=(Nf^{\pi})_n(\{(\gamma_{11},y_{11}),\cdots,(\gamma_{1m_1},y_{1m_1})\},\cdots,\{(\gamma_{n1},y_{n1}),\cdots,(\gamma_{n m_n},y_{n m_n})\})\\
&=(\{y_{11},\cdots,y_{1m_1}\},\cdots,\{y_{n1},\cdots,y_{nm_n}\})\\
&=(\sigma'_1,\cdots,\sigma'_n)
\end{aligned}
$$
and
$$
\begin{aligned}
(\tilde{\mu}_{\Gamma})_n & \circ (Nl)_n \circ S_n  \left((\gamma_1,\cdots,\gamma_n),(\sigma'_1,\cdots,\sigma'_n)\right)\\
&=(\tilde{\mu}_{\Gamma})_n \circ (Nl)_n(\{(\gamma_{11},y_{11}),\cdots,(\gamma_{1m_1},y_{1m_1})\},\cdots,\{(\gamma_{n1},y_{n1}),\cdots,(\gamma_{n m_n},y_{n m_n})\})
\\
&=(\tilde{\mu}_{\Gamma})_n(\{\gamma_{11},\cdots,\gamma_{1m_1}\},\cdots,\{\gamma_{n1},\cdots,\gamma_{nm_n}\})\\
&=(\cup_{j=1}^{m_1}\gamma_{1j},\cdots,\cup_{j=1}^{m_n}\gamma_{nj})\\
&=(\gamma_1,\cdots,\gamma_n).
\end{aligned}
$$
Finally, one can check that the induced map from
$N\pi^{\ast}(\hat N \Gamma)$ to $N \Gamma \times_{N \Sigma} N\Sigma'$ is the inverse of $S$. This gives the desired result.
}

\begin{lem}\label{lem:TFun}
We have
$T(\delta_{\Sigma})=\Id_{N \Sigma}$ and $T(\pi \kleisli \pi')=T(\pi)\circ T(\pi')$.
\end{lem}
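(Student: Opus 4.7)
The plan is to reduce both equalities to the analogous facts about the underlying maps $\overline{(-)}$ of simplicial relations, using the fact that $T(\pi)$ is defined degree-wise by applying $\overline{\pi}$ componentwise.

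For the first claim, I would compute $\overline{\delta_\Sigma}$ explicitly. Since $\delta_\Sigma \colon \Sigma \to \hat N \Sigma$ is the simplicial complex map sending a vertex $x$ to $\{x\}$, the induced map on a simplex $\sigma \in \Sigma$ satisfies $\delta_\Sigma(\sigma) = \{\{x\} : x \in \sigma\}$. Hence
\[
\overline{\delta_\Sigma}(\sigma) \;=\; \bigcup_{\tau \in \delta_\Sigma(\sigma)} \tau \;=\; \bigcup_{x \in \sigma} \{x\} \;=\; \sigma.
\]
Applying Definition \ref{def:T-pi} degree-wise then gives $T(\delta_\Sigma)_n(\sigma_1,\cdots,\sigma_n) = (\sigma_1,\cdots,\sigma_n)$, so $T(\delta_\Sigma) = \Id_{N\Sigma}$.

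For the second claim, the key input is Equation (\ref{eq:pibarComp}), which identifies $\overline{\pi \kleisli \pi'} = \overline{\pi} \circ \overline{\pi'}$ as functors $\catC_{\Sigma''} \to \catC_{\Sigma}$. Given $(\sigma_1,\cdots,\sigma_n) \in (N\Sigma'')_n$, a direct computation using Definition \ref{def:T-pi} then yields
\[
T(\pi \kleisli \pi')_n(\sigma_1,\cdots,\sigma_n)
= (\overline{\pi \kleisli \pi'}(\sigma_1),\cdots,\overline{\pi \kleisli \pi'}(\sigma_n))
= (\overline{\pi}(\overline{\pi'}(\sigma_1)),\cdots,\overline{\pi}(\overline{\pi'}(\sigma_n))),
\]
which equals $T(\pi)_n \circ T(\pi')_n (\sigma_1,\cdots,\sigma_n)$ by another application of the definition.

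There is essentially no obstacle here once the definitions are unfolded: both identities follow from the corresponding identities for $\overline{(-)}$, the functor $\overline{\delta_\Sigma} = \Id$ being a direct consequence of the monad unit axiom for $\hat N$, and $\overline{\pi \kleisli \pi'} = \overline{\pi}\circ\overline{\pi'}$ being already recorded in Equation (\ref{eq:pibarComp}). The only thing to check is that the degree-wise formulas commute with the simplicial structure maps, which is automatic because $T(\pi)$ is defined component-wise and both face and degeneracy maps in $N\Sigma$ act on the individual entries (via union or insertion of $\emptyset$), operations that $\overline{\pi}$ respects as a functor of posets.
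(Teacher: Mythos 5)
Your proof is correct and follows essentially the same route as the paper's: both reduce the claims to the degree-wise definition of $T(\pi)$, the computation $\overline{\delta_\Sigma}(\sigma)=\sigma$, and the identity $\overline{\pi\kleisli\pi'}=\overline{\pi}\circ\overline{\pi'}$ from Equation (\ref{eq:pibarComp}). The closing remark about compatibility with face and degeneracy maps concerns the well-definedness of $T(\pi)$ itself rather than the lemma, but it does no harm.
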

\Proof{Given $(\sigma_1,\cdots,\sigma_n) \in (N \Sigma)_n$, we have 
$$
T(\delta_{\Sigma})_n(\sigma_1,\cdots,\sigma_n)=(\overline{\delta_{\Sigma}}(\sigma_1),\cdots,\overline{\delta_{\Sigma}}(\sigma_n))=(\sigma_1,\cdots,\sigma_n).
$$
For $(\sigma_1,\cdots,\sigma_n) \in (N \Sigma'')_n$, we have
$$
\begin{aligned}
T(\pi \kleisli \pi')_n(\sigma_1,\cdots,\sigma_n)&=\left(\overline{\pi \kleisli \pi'}(\sigma_1),\cdots,\overline{\pi \kleisli \pi'}(\sigma_n)\right)\\
&=(\overline{\pi} \circ \overline{\pi'}(\sigma_1),\cdots,\overline{\pi} \circ \overline{\pi'}(\sigma_n))\\
&=T(\pi)_n\left( T(\pi')_n(\sigma_1,\cdots,\sigma_n)\right).
\end{aligned}
$$
} 
\begin{lem}\label{lem: ReplaabNN}
Given a type II morphism $\alpha:f \to g$, and a simplicial complex map 
$\pi: \Sigma' \to \hat N \Sigma$, we have 
$$
N(\pi^\ast(\hat N \alpha))=
T(\pi)^\ast(N\alpha).
$$
\end{lem}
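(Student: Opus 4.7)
The plan is to reduce the identity to a diagram chase by exploiting the universal property of pull-backs. By Lemma \ref{lem:tildemupull} applied separately to the bundle scenarios $f:\Gamma\to \Sigma$ and $g:\Gamma'\to \Sigma$, I get canonical isomorphisms $N\pi^*(\hat N\Gamma)\cong T(\pi)^*(N\Gamma)$ and $N\pi^*(\hat N\Gamma')\cong T(\pi)^*(N\Gamma')$. Under these identifications, both $N(\pi^*(\hat N\alpha))$ and $T(\pi)^*(N\alpha)$ become morphisms $T(\pi)^*(N\Gamma)\to T(\pi)^*(N\Gamma')$ sitting over $N\Sigma'$.

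Since $T(\pi)^*(N\Gamma')\cong N\Gamma'\times_{N\Sigma} N\Sigma'$ is a pull-back, any two simplicial set maps into it that agree after post-composition with both projections must coincide. Agreement on the projection to $N\Sigma'$ is automatic, as both maps are constructed as pull-backs over $N\Sigma'$ (the left-hand side because $\pi^*(\hat N\alpha)$ is a morphism in the slice over $\Sigma'$, hence remains so after applying $N$; the right-hand side by definition of $T(\pi)^*(N\alpha)$). The projection onto $N\Gamma'$, translated through the isomorphism of Lemma \ref{lem:tildemupull}, is realized as $\tilde\mu_{\Gamma'}\circ Nl_g$. So the task reduces to verifying
\[
\tilde\mu_{\Gamma'}\circ Nl_g\circ N(\pi^*(\hat N\alpha)) \;=\; N\alpha\circ \tilde\mu_{\Gamma}\circ Nl.
\]

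This identity is a two-step diagram chase. First, the defining pull-back square for $\pi^*(\hat N\alpha)$ gives the equality $l_g\circ \pi^*(\hat N\alpha)=\hat N\alpha\circ l$ of simplicial complex maps; applying the functor $N$ and using functoriality yields $Nl_g\circ N(\pi^*(\hat N\alpha))=N(\hat N\alpha)\circ Nl$. Second, the transformation $\tilde\mu:N\hat N\Rightarrow N$ of Definition \ref{def:tilde-mu} is natural with respect to any simplicial complex map---a quick check on a generic $n$-simplex $(\{\sigma_{ij}\}_j)_i$ shows that $\tilde\mu$ applied on either side of $N\hat N\alpha$ produces $(\alpha(\cup_j\sigma_{ij}))_i=(\cup_j\alpha(\sigma_{ij}))_i$. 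In particular, $\tilde\mu_{\Gamma'}\circ N(\hat N\alpha)=N\alpha\circ\tilde\mu_{\Gamma}$. Chaining these two equalities gives the desired identity.

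The main obstacle is really just notational bookkeeping---keeping straight the roles of the two pull-back squares (one for $\hat N f$ along $\pi$, one for $\hat N g$ along $\pi$) and of the two isomorphisms from Lemma \ref{lem:tildemupull} (one for $f$, one for $g$). No substantive new calculation is required beyond the naturality of $\tilde\mu$, which follows directly from its componentwise definition as ``take unions coordinate-wise''.
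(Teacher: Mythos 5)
Your proof is correct and follows the same strategy as the paper's: identify the target with a pull-back via Lemma \ref{lem:tildemupull} and check that both maps have the same projections to $N\Gamma'$ and $N\Sigma'$. The paper leaves the projection check as "one can check"; you have simply carried it out explicitly, via functoriality of $N$ applied to the defining square of $\pi^\ast(\hat N\alpha)$ together with the naturality of $\tilde\mu$, both of which hold as you state.
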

\Proof{The target of the map $T(\pi)^\ast(N\alpha)$ is $N\pi^\ast(\hat N \Gamma')$, hence it is uniquely determined by the projections to $N\Gamma'$ and $N\Sigma'$ (see Lemma \ref{lem:tildemupull}). One can {check} that the map $N(\pi^{\ast}(\hat N \alpha))$ has the same projection to $N\Gamma'$ and $N\Sigma'$.}
%

%{These two lemmas will be used to prove that $N$ defines a functor from the category of bundle scenarios to the category of simplicial scenarios.
%We will also show that this functor is fully faithful. The key lemma for this fact is the following result describing simplicial set maps between the nerves of simplicial complexes.
%}
%\comm{three lemmas used for the faithfullness are moved to appendix}
 
\begin{lem}\label{lem:pi-Npi}%\ak{[[Moved back]]}
%Let $\Sigma$ and $\Sigma'$ be simplicial complexes.
Given a simplicial set map $f: N\Sigma' \to N \Sigma$,   there exits a unique map $\pi:\Sigma' \to \hat N\Sigma$ such that $T(\pi)=f$.
\end{lem}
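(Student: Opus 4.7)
The plan is to recognize that both the simplicial relations $\pi:\Sigma'\to\hat N\Sigma$ and the simplicial set maps $f:N\Sigma'\to N\Sigma$ are controlled entirely by what happens on vertices of $\Sigma'$, and then to match these two data. The main tool is Proposition~\ref{pro:fNtoN}, which says that any simplicial set map $f:N\Sigma'\to N\Sigma$ satisfies
\[
f_n(\sigma_1,\dots,\sigma_n) = (f_1(\sigma_1),\dots,f_1(\sigma_n))
\quad\text{and}\quad
f_1(\sigma') = \cup_{x'\in\sigma'} f_1(\{x'\}),
\]
so $f$ is determined by the function $x'\mapsto f_1(\{x'\})$, which takes values in $\Sigma$ (viewed as $(N\Sigma)_1$).

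For existence, I would define $\pi$ on vertices by $\pi(x') := f_1(\{x'\}) \in \Sigma = (\hat N\Sigma)_0$. To check this assembles into a simplicial complex map $\Sigma'\to\hat N\Sigma$, I need that for every $\sigma' \in \Sigma'$ the set $\{\pi(x'):x'\in\sigma'\}$ is a simplex of $\hat N\Sigma$, i.e.\ that $\cup_{x'\in\sigma'}\pi(x') \in \Sigma$. But this union equals $\cup_{x'\in\sigma'} f_1(\{x'\}) = f_1(\sigma')$ by part~(2) of Proposition~\ref{pro:fNtoN}, and $f_1(\sigma')$ is in $(N\Sigma)_1 = \Sigma$ by construction. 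Hence $\pi$ is a well-defined simplicial complex map.

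To verify $T(\pi)=f$, recall that $\overline{\pi}(\sigma') = \cup_{x'\in\sigma'}\pi(x') = f_1(\sigma')$ by the computation above. Therefore, on an $n$-simplex $(\sigma_1,\dots,\sigma_n)\in (N\Sigma')_n$,
\[
T(\pi)_n(\sigma_1,\dots,\sigma_n) = (\overline{\pi}(\sigma_1),\dots,\overline{\pi}(\sigma_n)) = (f_1(\sigma_1),\dots,f_1(\sigma_n)) = f_n(\sigma_1,\dots,\sigma_n),
\]
where the last equality uses part~(1) of Proposition~\ref{pro:fNtoN}.

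For uniqueness, suppose $\pi':\Sigma'\to\hat N\Sigma$ is any simplicial complex map with $T(\pi')=f$. Since $\pi'$ is determined by its values on vertices, it suffices to show $\pi'(x') = \pi(x')$ for each $x'\in\Sigma'_0$. But evaluating $T(\pi')$ on the $1$-simplex $\{x'\}\in(N\Sigma')_1$ gives $\overline{\pi'}(\{x'\}) = \pi'(x')$, which must equal $f_1(\{x'\}) = \pi(x')$. There is no real obstacle here; the statement is essentially a bookkeeping consequence of Proposition~\ref{pro:fNtoN}, and the only subtlety is to keep straight the identifications $(N\Sigma)_1 = \Sigma = (\hat N\Sigma)_0$ and the relation $\overline{\pi}(\sigma') = \cup_{x'\in\sigma'}\pi(x')$.
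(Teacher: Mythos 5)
Your proof is correct and follows essentially the same route as the paper's: define $\pi$ on vertices by $\pi(x')=f_1(\{x'\})$, use part (2) of Proposition~\ref{pro:fNtoN} to see that $\pi$ is a well-defined simplicial complex map into $\hat N\Sigma$, use both parts to verify $T(\pi)=f$, and deduce uniqueness from the fact that $\pi(x')=\overline{\pi}(\{x'\})=T(\pi)_1(\{x'\})=f_1(\{x'\})$. No gaps.
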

\Proof{ 
We define a map $\pi:\Sigma' \to \hat N\Sigma$ of simplicial complexes by 
$\pi(x)=f_1(x)$ where $x\in \Sigma'_0$. 
For $\sigma'=\{x_1,\cdots,x_n\} \in \Sigma'$, we have 
$\pi(\sigma')=\{f(x_1),\cdots,f(x_n)\}$. By Part (2) of Proposition \ref{pro:fNtoN}, $\cup_{i=1}^n f_1(x_i)=f_1(\sigma') \in \Sigma$. Therefore $\pi(\sigma')$ is a simplex of $\hat N \Sigma$.  
Now, we prove that $T(\pi)=f$. Given $(\{x_{11},\cdots,x_{1m_1}\}, \cdots,\{x_{n1},\cdots,x_{nm_n}\}) \in (N\Sigma')_n$, using Proposition \ref{pro:fNtoN}, we obtain
$$
\begin{aligned}
&T(\pi)_n (\{x_{11},\cdots,x_{1m_1}\}, \cdots,\{x_{n1},\cdots,x_{nm_n}\})\\ 
&=(\tilde{\mu}_{\Sigma})_n\left(N(\pi)_n(\{x_{11},\cdots,x_{1m_1}\}, \cdots,\{x_{n1},\cdots,x_{nm_n}\})\right) \\
&=(\tilde{\mu}_{\Sigma})_n\left(\pi(\{x_{11},\cdots,x_{1m_1}\}), \cdots,\pi(\{x_{n1},\cdots,x_{nm_n}\})\right)
\\
&=(\tilde{\mu}_{\Sigma})_n\left(\{f_1(x_{11}),\cdots,f_1(x_{1m_1})\}, \cdots,\{f_1(x_{n1}),\cdots,f_1(x_{nm_n})\}\right)\\
&=\left(\cup_
{i=1}^{m_1}f_1(x_{1i}), \cdots,\cup_
{i=1}^{m_n}f_1(x_{ni})\right)\\
&=\left(f_1(\{x_{11},\cdots,x_{1m_1}\}), \cdots,f_1(\{x_{n1},\cdots,x_{nm_n}\})\right)
\\
&=
f_n(\{x_{11},\cdots,x_{1m_1}\}, \cdots,\{x_{n1},\cdots,x_{nm_n}\}).
\end{aligned}
$$
The uniqueness follows from the observation that any simplicial complex map $\pi: \Sigma \to \hat N \Sigma'$ with the property that 
$T(\pi)=f$ satisfies
$$
\pi(x)=\overline{\pi}(x)=
T(\pi)_1(x)=f_1(x)
$$ 
for every vertex $x \in \Sigma'_0$.   
}

\begin{lem}\label{lem:Nalph-alpha}
Given bundle scenarios $f: \Gamma \to \Sigma$, $g: \Gamma' \to \Sigma$, and the following commutative diagram of simplicial sets
%in $\catsSet$
%
\begin{equation}
\begin{tikzcd}
N\Gamma  
 \arrow[rd,"Nf"'] \arrow[rr,"\tilde{\alpha}"]&& N\Gamma'
 \arrow[dl,"Ng"] \\
& N\Sigma  & 
\end{tikzcd}
\end{equation}
there exists a unique type II morphism $\alpha: f \to g$ such that 
$N\alpha =\tilde{\alpha}$.  
\end{lem}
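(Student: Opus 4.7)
The plan is to construct $\alpha$ from the vertex-level restriction of $\tilde{\alpha}_1$ and then extend by taking unions. Since $(N\Gamma)_1 = \Gamma$, the component $\tilde{\alpha}_1$ is a set map from simplices of $\Gamma$ to simplices of $\Gamma'$. For each vertex $x \in \Gamma_0$, viewed as the $1$-simplex $\{x\}$ of $N\Gamma$, I will first show that $\tilde{\alpha}_1(\{x\})$ consists of a single vertex of $\Gamma'$.

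This is the crux of the argument and the main obstacle. Commutativity of the given triangle in degree one yields $g(\tilde{\alpha}_1(\{x\})) = (Nf)_1(\{x\}) = \{f(x)\}$, a singleton simplex of $\Sigma$. If $\tilde{\alpha}_1(\{x\})$ contained two distinct vertices $y_1, y_2$, then $\{y_1, y_2\}$ would be a simplex of $\Gamma'$ with $g(y_1) = g(y_2) = f(x)$, contradicting that $g$ is discrete over vertices. Hence $\tilde{\alpha}_1(\{x\}) = \{y\}$ for a unique vertex $y$, and I will set $\alpha(x) = y$. Notice the argument fails without the discrete-over-vertices property of the target bundle $g$; this is precisely what allows $\tilde{\alpha}_1$ to be pinned down by its values on singletons.

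Next I will verify that $\alpha$ extends to a simplicial complex map with $g \circ \alpha = f$. Given $\gamma = \{x_1, \ldots, x_n\} \in \Gamma$, apply $\tilde{\alpha}_n$ to the $n$-simplex $(\{x_1\}, \ldots, \{x_n\}) \in (N\Gamma)_n$; by Proposition \ref{pro:fNtoN}(1) this yields $(\{\alpha(x_1)\}, \ldots, \{\alpha(x_n)\}) \in (N\Gamma')_n$, which forces $\{\alpha(x_1), \ldots, \alpha(x_n)\} \in \Gamma'$. The identity $g(\alpha(x)) = f(x)$ on vertices holds by construction, so $\alpha$ is a valid type II morphism $f \to g$.

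Finally, to check $N\alpha = \tilde{\alpha}$, I will invoke Proposition \ref{pro:fNtoN} once more: in degree one, $(N\alpha)_1(\gamma) = \alpha(\gamma) = \bigcup_{x \in \gamma} \{\alpha(x)\} = \bigcup_{x \in \gamma} \tilde{\alpha}_1(\{x\}) = \tilde{\alpha}_1(\gamma)$ by part (2), while in higher degrees both maps act componentwise by part (1). Uniqueness is then immediate: any simplicial complex map $\alpha'$ with $N\alpha' = \tilde{\alpha}$ must satisfy $\{\alpha'(x)\} = (N\alpha')_1(\{x\}) = \tilde{\alpha}_1(\{x\}) = \{\alpha(x)\}$ on every vertex $x$, so $\alpha' = \alpha$.
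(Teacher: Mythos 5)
Your proposal is correct and follows essentially the same route as the paper's own proof: both pin down $\alpha$ on vertices by using that $g$ is discrete over vertices to force $\tilde{\alpha}_1(\{x\})$ to be a single vertex, then use Proposition \ref{pro:fNtoN} (part 2 for well-definedness on simplices, part 1 for agreement in higher degrees) and conclude uniqueness from the vertex-level determination. Your write-up merely makes explicit a couple of steps the paper leaves implicit (the two-distinct-vertices contradiction and the check that $g\circ\alpha=f$).
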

\Proof{ Observe that for $x \in \Gamma_0$ we have 
$g(\tilde{\alpha}_1(x))=f(x)\in \Sigma_0$. Since $g$ is discrete over vertices, we obtain that $\tilde{\alpha}_1(x)$ is a vertex in $\Gamma'$. We define $\alpha: \Gamma \to \Gamma'$ to be $\alpha(x)=\tilde{\alpha}_1(x)$ for every $x \in \Gamma_0$. By part $2$ of Proposition \ref{pro:fNtoN} we conclude that $\alpha(\gamma)=\tilde{\alpha}_1(\gamma)\in \Gamma'$ for every simplex $\gamma\in \Gamma$. 
%
%
%$=\{x_1,\cdots,x_n\} \in \Gamma$,  
%$\alpha(\gamma)=\{\tilde{\alpha}_1(x_1),\cdots \tilde{\alpha}_1(x_n)\}$, , we have $\cup_{i=1}^n\{\tilde{\alpha}_1(x_n)\}
%
Therefore $\alpha$ is a well-defined simplicial complex map. Given $(\gamma_1,\cdots,\gamma_n) \in (N\Gamma)_n$, 
by part $1$ of Proposition \ref{pro:fNtoN} we have  
$$
\tilde{\alpha}_n(\gamma_1,\cdots,\gamma_n)=(\tilde{\alpha}_1(\gamma_1),\cdots,{\alpha}_1(\gamma_n))=({\alpha}(\gamma_1),\cdots,{\alpha}(\gamma_n))
=(N\alpha)_n(\gamma_1,\cdots,\gamma_n).$$
If $N(\alpha_1)=N(\alpha_2)$, then it is clear that $\alpha_1=\alpha_2$. Therefore the map $\alpha$ is unique with the property that $N\alpha=\tilde{\alpha}$.
}

\subsection{{Lemmas: Simplicial distributions functor}}
\label{sec:lem-simplicial-dist}

{In this section we prove results that allow us to push-forward a simplicial distribution along a morphism of $\catsScen$. We begin by showing that the construction $p\mapsto Np$ given in Definition \ref{def:Np} provides a well-defined simplicial distribution.
}

\begin{lem}\label{lem:N-p}
$Np$ is a well-defined simplicial distribution on $Nf$, i.e., it belongs to $\sDist(Nf)$.
\end{lem}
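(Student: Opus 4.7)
The plan is to verify three things: (i) that each $(Np)_n(\sigma_1,\cdots,\sigma_n)$ is a probability distribution on $(N\Gamma)_n$, (ii) that $Np$ is a simplicial set map $N\Sigma \to D_R(N\Gamma)$, i.e.\ it commutes with face and degeneracy maps, and (iii) that the support condition of Proposition \ref{pro:sDist-characterization} is satisfied. Item (iii) is built into the definition of $Np$, so the real work is in (i) and (ii).

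For normalization, I would mimic the argument of Lemma \ref{lem:hat-N-p}. Since by definition $(Np)_n(\sigma_1,\cdots,\sigma_n)$ is supported on $(Nf)_n^{-1}(\sigma_1,\cdots,\sigma_n)$, the total mass is
$$
\sum_{(\gamma_1,\cdots,\gamma_n):\,f(\gamma_i)=\sigma_i}\,p_{\cup_i\sigma_i}\!\left(\cup_i\gamma_i\right).
$$
Because $f$ is discrete over vertices, Remark \ref{Improppp} tells us that for any $\gamma\in f^{-1}(\cup_i\sigma_i)$ there is a unique decomposition $\gamma=\cup_i\gamma_i$ with $f(\gamma_i)=\sigma_i$. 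Hence the above sum collapses to $\sum_{\gamma\in f^{-1}(\cup_i\sigma_i)}p_{\cup_i\sigma_i}(\gamma)=1$.

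For the simplicial structure, I would check each type of face map separately. For $0<i<n$, the face $d_i$ merges $\sigma_i\cup \sigma_{i+1}$ (and correspondingly $\gamma_i\cup\gamma_{i+1}$ on the upstairs side), so $\cup_j\sigma_j$ and $\cup_j\gamma_j$ are unchanged and the equality $(Np)_{n-1}(d_i(\sigma_\bullet))(d_i(\gamma_\bullet))=(Np)_n(\sigma_\bullet)(\gamma_\bullet)$ is immediate from the defining formula; pushing forward under $D_R(d_i)$ then amounts to a sum over preimages that is again reduced by the discrete-over-vertices property. For $i=0$ or $i=n$, $d_i$ deletes an outer entry so that $\cup\sigma_j$ strictly shrinks to $\cup_{j\neq i}\sigma_j$; this is exactly the compatibility condition $p_\sigma|_{\sigma'}=p_{\sigma'}$ in Definition \ref{def:emprical-bundle}, applied via Equation (\ref{eq:Improppp}), with the bijection between the fibers provided once more by Remark \ref{Improppp}. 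The degeneracy maps $s_j$ insert an empty simplex; I would check that $(Np)_{n+1}(s_j\sigma_\bullet)(s_j\gamma_\bullet)$ coincides with $(Np)_n(\sigma_\bullet)(\gamma_\bullet)$ because neither $\cup\sigma_j$ nor $\cup\gamma_j$ is affected by inserting $\emptyset$, and that the support condition forces $(Np)_{n+1}(s_j\sigma_\bullet)$ to vanish on non-degenerate simplices in the fiber.

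The step I expect to be most delicate is the inner face map case combined with the push-forward $D_R(d_i)$, since one must verify that summing $(Np)_n$ over preimages of a fixed $(n{-}1)$-simplex $(\gamma_1,\cdots,\gamma_{i-1},\gamma_i',\gamma_{i+1},\cdots,\gamma_{n-1})$ under $d_i$ reproduces $(Np)_{n-1}$ on that simplex. This amounts to checking that splittings of $\gamma_i'$ as $\gamma_i\cup\gamma_{i+1}$ with $f(\gamma_i)=\sigma_i$, $f(\gamma_{i+1})=\sigma_{i+1}$ are unique — which again is Remark \ref{Improppp} applied to $\sigma_i\cup\sigma_{i+1}$ and $\gamma_i'$. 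Once this is laid out, (ii) follows, and combined with (i) and the support condition we conclude $Np\in\sDist(Nf)$.
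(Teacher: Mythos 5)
Your proposal is correct and follows essentially the same route as the paper's proof: normalization via the bijection from Remark \ref{Improppp}, inner face maps handled by uniqueness of splittings under the discrete-over-vertices property, outer face maps reduced to the compatibility condition of the empirical model via Equation (\ref{eq:Improppp}), and degeneracies by insertion of the empty simplex. The step you flag as delicate is exactly where the paper also does the main work.
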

\Proof{
%\ak{[[The proof completed]]} 
First, we prove that $(Np)_n(\sigma_1,\cdots,\sigma_n) \in D_R((Nf)^{-1}_n(\sigma_1,\cdots,\sigma_n))$. Using Remark \ref{Improppp} we have
$$
\begin{aligned}
\sum_{(\gamma_1,\cdots,\gamma_n)\in (Nf)^{-1}_n(\sigma_1,\cdots,\sigma_n)}(Np)_n(\sigma_1,\cdots,\sigma_n)(\gamma_1,\cdots,\gamma_n)&=
\sum_{(\gamma_1,\cdots,\gamma_n)\in (Nf)^{-1}_n(\sigma_1,\cdots,\sigma_n)}p_{\cup_{i=1}^n\sigma_i}(\cup_{i=1}^n\gamma_i)\\
&=\sum_{\gamma \in f^{-1}(\cup_{i=1}^n\sigma_i)}p_{\cup_{i=1}^n\sigma_i}(\gamma)=1.
\end{aligned}
$$
Now, we prove that $Np$ respect the simplicial structure. Given 
$(\sigma_1,\cdots,\sigma_n) \in (N \Sigma)_n$ and 
$(\tilde{\gamma}_1,\cdots,\tilde{\gamma}_{n-1}) \in
(N \Gamma)_{n-1}$,  for $0<i<n$ we have
$$
\begin{aligned}
D_R(d_i^{N\Gamma})& \left((Np)_n(\sigma_1,\cdots,\sigma_n) \right)(\tilde{\gamma}_1,\cdots,\tilde{\gamma}_{n-1})\\
&=\sum_{(\gamma_1,\cdots,\gamma_n)\,:\,\,d_i^{N\Gamma}(\gamma_1,\cdots,\gamma_n)=(\tilde{\gamma}_1,\cdots,\tilde{\gamma}_{n-1})}(Np)_n(\sigma_1,\cdots,\sigma_n) ({\gamma}_1,\cdots,{\gamma}_n) 
\\
&=\sum_{(\gamma_1,\cdots,\gamma_n) \in (Nf)^{-1}_n(\sigma_1 ,\cdots,\sigma_n)\,:\,\,  
  \gamma_1=\tilde{\gamma}_1,\cdots,\gamma_i \cup \gamma_{i+1}=\tilde{\gamma}_i,\cdots,\gamma_n=\tilde{\gamma}_{n-1}}p_{\cup_{i=1}^n\sigma_i}(\cup_{i=1}^n \gamma_i).
\end{aligned}
$$
By {Remark (\ref{Improppp})} there exists a unique $(\gamma_1,\cdots,\gamma_n) \in (Nf)^{-1}_n(\sigma_1 ,\cdots,\sigma_n)$ such that $\cup_{i=1}^n \gamma_i=\cup_{i=1}^{n-1} \tilde{\gamma}_i$. 
Therefore the sum above is equal to $p_{\cup_{i=1}^n\sigma_i}(\cup_{i=1}^{n-1} \tilde{\gamma}_i)$. On the other hand, we have 
$$
\begin{aligned}
(Np)_{n-1} & \left(d_i^{N\Sigma}(\sigma_1,\cdots,\sigma_n)\right)(\tilde{\gamma}_1,\cdots,\tilde{\gamma}_{n-1})\\
&=
(Np)_{n-1}(\sigma_1,\cdots,\sigma_i\cup\sigma_{i+1},\cdots,\sigma_n)(\tilde{\gamma}_1,\cdots,\tilde{\gamma}_{n-1})\\
&=p_{\cup_{i=1}^n\sigma_i}(\cup_{i=1}^{n-1} \tilde{\gamma}_i).
\end{aligned}
$$
For $i=n$,   using {Equation (\ref{eq:Improppp})} we have
$$
\begin{aligned}
D_R(d_n^{N\Gamma})& \left((Np)_n(\sigma_1,\cdots,\sigma_n) \right)(\tilde{\gamma}_1,\cdots,\tilde{\gamma}_{n-1})\\
&=\sum_{\gamma \in f^{-1}(\sigma_n)}(Np)_n(\sigma_1,\cdots,\sigma_n) (\tilde{\gamma}_1,\cdots,\tilde{\gamma}_{n-1},\gamma)\\
&=\sum_{\gamma \in f^{-1}(\sigma_n)} p_{\cup_{i=1}^n\sigma_i}(\cup_{i=1}^{n-1} \tilde{\gamma}_i \cup \gamma) \\
&=\sum_{\cup_{i=1}^{n-1} \tilde{\gamma}_i \subset \gamma' \in f^{-1}(\cup_{i=1}^{n} \sigma_i)} p_{\cup_{i=1}^n\sigma_i}(\gamma') \\
&=p_{\cup_{i=1}^n\sigma_i}|_{\cup_{i=1}^{n-1}\sigma_i}(\cup_{i=1}^{n-1} \tilde{\gamma}_i) \\
&=p_{\cup_{i=1}^{n-1}\sigma_i}(\cup_{i=1}^{n-1} \tilde{\gamma}_i)
\\
&=(Np)_{n-1}(\sigma_1,\cdots,\sigma_{n-1})(\tilde{\gamma}_1,\cdots,\tilde{\gamma}_{n-1})\\
&=(Np)_{n-1}\left(d^{N\Sigma}_n(\sigma_1,\cdots,\sigma_{n})\right)(\tilde{\gamma}_1,\cdots,\tilde{\gamma}_{n-1}).
\end{aligned}
$$
The $i=0$ case is similar. For the degeneracy maps,  one can see that for $0 \leq i \leq n$ both 
$$
D_R(s_i^{N\Gamma}) \left((Np)_n(\sigma_1,\cdots,\sigma_n) \right)(\tilde{\gamma}_1,\cdots,\tilde{\gamma}_{n+1})
 \,\,\, \text{and}\,\,\, (Np)_{n+1}  \left(s_i^{N\Sigma}(\sigma_1,\cdots,\sigma_n)\right)(\tilde{\gamma}_1,\cdots,\tilde{\gamma}_{n+1})$$ 
 are equal to 
$
p_{\cup_{i=1}^n \sigma_i}
(\cup_{i=1}^{n+1} \tilde{\gamma}_{i})
$ 
if $\tilde{\gamma}_i=\emptyset$; otherwise, both are zero.  
}

\begin{lem}\label{lem:PinNF}
Let $f: \Gamma \to \Sigma$ be a simplicial complex map that is discrete over vertices.
Given $p \in \sDist(Nf)$ and $(\sigma_1,\cdots,\sigma_n)\in (N\Sigma)_n$, we have
$$
p_n(\sigma_1,\cdots,\sigma_n)(\gamma_1,\cdots,\gamma_n)=p_1(\cup_{i=1}^n \sigma_i)(\cup_{i=1}^n \gamma_i)
$$
where  $(\gamma_1,\cdots,\gamma_n) \in
(Nf)^{-1}_n(\sigma_1,\cdots,\sigma_n)$.
\end{lem}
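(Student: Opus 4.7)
The plan is to exploit the compatibility of $p$ with face maps via the iterated inner face map that collapses an $n$-simplex into a $1$-simplex. First I would verify by induction that iterating $d_1$ a total of $n-1$ times sends an $n$-simplex $(\sigma_1,\dots,\sigma_n)\in (N\Sigma)_n$ to the $1$-simplex $\cup_{i=1}^n\sigma_i \in (N\Sigma)_1$ (and analogously for $N\Gamma$); this is immediate from the formula $d_1(\tau_1,\tau_2,\tau_3,\dots,\tau_k)=(\tau_1\cup\tau_2,\tau_3,\dots,\tau_k)$.

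Then, since $p\colon N\Sigma\to D_R(N\Gamma)$ is a simplicial set map, applying $(d_1)^{n-1}$ on both sides yields
\begin{equation*}
p_1(\cup_{i=1}^n\sigma_i) \;=\; D_R\!\left((d_1^{N\Gamma})^{n-1}\right)\!\bigl(p_n(\sigma_1,\dots,\sigma_n)\bigr).
\end{equation*}
Evaluating both sides on $\cup_{i=1}^n\gamma_i$ and using the sum formula for $D_R$, we get
\begin{equation*}
p_1(\cup_{i=1}^n\sigma_i)(\cup_{i=1}^n\gamma_i) \;=\; \sum_{(\gamma'_1,\dots,\gamma'_n)} p_n(\sigma_1,\dots,\sigma_n)(\gamma'_1,\dots,\gamma'_n),
\end{equation*}
where the sum runs over $(\gamma'_1,\dots,\gamma'_n)\in (N\Gamma)_n$ with $\cup_{i=1}^n\gamma'_i=\cup_{i=1}^n\gamma_i$.

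The key step is to show that only one term in this sum is nonzero, namely the one with $(\gamma'_1,\dots,\gamma'_n)=(\gamma_1,\dots,\gamma_n)$. By Proposition \ref{pro:sDist-characterization}, a summand vanishes unless $(Nf)_n(\gamma'_1,\dots,\gamma'_n)=(\sigma_1,\dots,\sigma_n)$, i.e.\ $f(\gamma'_i)=\sigma_i$ for each $i$. For such a tuple, each $\gamma'_i$ is a subsimplex of $\cup_{j}\gamma_j\in f^{-1}(\cup_{j}\sigma_j)$ satisfying $f(\gamma'_i)=\sigma_i\subset \cup_{j}\sigma_j$. Since $f$ is discrete over vertices, Remark \ref{Improppp} gives a unique such subsimplex; applying this uniqueness to both $\gamma'_i$ and $\gamma_i$ forces $\gamma'_i=\gamma_i$ for all $i$.

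I do not anticipate a serious obstacle. The only subtle check is the applicability of Remark \ref{Improppp} when some $\sigma_i$ or $\gamma_i$ is the empty simplex (which can occur for degenerate entries); but $\emptyset$ is a simplex of every complex and is its own unique preimage, so this case is harmless. Substituting the unique surviving term yields $p_1(\cup_i\sigma_i)(\cup_i\gamma_i)=p_n(\sigma_1,\dots,\sigma_n)(\gamma_1,\dots,\gamma_n)$, as desired.
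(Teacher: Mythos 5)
Your proposal is correct and follows essentially the same route as the paper's proof: collapse the $n$-simplex to its union via iterated $d_1$, use that $p$ commutes with face maps to express $p_1(\cup_i\sigma_i)(\cup_i\gamma_i)$ as a sum over preimage tuples, and then invoke Proposition \ref{pro:sDist-characterization} together with the uniqueness from Remark \ref{Improppp} to see that only the term $(\gamma_1,\dots,\gamma_n)$ survives. Your extra remark about degenerate (empty) entries is a harmless refinement the paper leaves implicit.
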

\begin{proof}
Since $p$ respects the simplicial structure, by Remark \ref{Improppp} we obtain 
$$
\begin{aligned}
p_1(\cup_{i=1}^n \sigma_i)(\cup_{i=1}^n \gamma_i)&=
p_1(d_1\circ\cdots\circ d_1(\sigma_1,\cdots,\sigma_n))(\cup_{i=1}^n \gamma_i)\\
&=
D_R(d_1\circ\cdots\circ d_1)(p_n(\sigma_1,\cdots,\sigma_n))(\cup_{i=1}^n \gamma_i)\\
&=\sum_{f(\tau_i)=
\sigma_i\,\,\text{and} \,\, \cup_{i=1}^n \tau_i=\cup_{i=1}^{n}\gamma_i}p_n(\sigma_1,\cdots,\sigma_n)(\tau_1,\cdots,\tau_n)\\
&=p_n(\sigma_1,\cdots,\sigma_n)(\gamma_1,\cdots,\gamma_n)  .
\end{aligned}
$$
\end{proof}

\begin{lem}\label{lem:Nalphaast}
 Let  $\alpha: f \to g$ be a type II morphism. For $p\in \bEmp(f)$, we have  
$N(\alpha_\ast p)= D_R(N\alpha) \circ Np$.

\end{lem}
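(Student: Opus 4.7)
The plan is to verify the equality of two simplicial distributions on $Ng$ by evaluating them pointwise. Fix $(\sigma_1,\cdots,\sigma_n) \in (N\Sigma)_n$ and a test simplex $(\gamma'_1,\cdots,\gamma'_n) \in (N\Gamma')_n$, and I would show
$$N(\alpha_*p)_n(\sigma_1,\cdots,\sigma_n)(\gamma'_1,\cdots,\gamma'_n) = D_R((N\alpha)_n)\bigl(Np_n(\sigma_1,\cdots,\sigma_n)\bigr)(\gamma'_1,\cdots,\gamma'_n).$$

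First, I would unpack the left-hand side. Assuming $(\gamma'_1,\cdots,\gamma'_n) \in (Ng)^{-1}_n(\sigma_1,\cdots,\sigma_n)$, Definition \ref{def:Np} gives $(\alpha_*p)_{\cup_i\sigma_i}(\cup_i\gamma'_i)$, which by Definition \ref{def:push-along-T2} equals $\sum_{\gamma \in f^{-1}(\cup_i\sigma_i):\,\alpha(\gamma)=\cup_i\gamma'_i} p_{\cup_i\sigma_i}(\gamma)$. Next, I would unpack the right-hand side using the action of $D_R$ on maps and Definition \ref{def:Np} to obtain
$$\sum_{(\gamma_1,\cdots,\gamma_n) \in (Nf)^{-1}_n(\sigma_1,\cdots,\sigma_n):\,\alpha(\gamma_i)=\gamma'_i\,\forall i} p_{\cup_i\sigma_i}(\cup_i\gamma_i),$$
since the preimage under $(N\alpha)_n$ of $(\gamma'_1,\cdots,\gamma'_n)$ consists of tuples with $\alpha(\gamma_i)=\gamma'_i$, and only tuples in $(Nf)^{-1}_n(\sigma_1,\cdots,\sigma_n)$ contribute.

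The key step is then to set up a bijection between the two index sets. Given $\gamma$ appearing on the LHS, Remark \ref{Improppp} applied to the discrete-over-vertices map $f$ produces unique simplices $\gamma_1,\cdots,\gamma_n \subseteq \gamma$ with $f(\gamma_i)=\sigma_i$ and $\cup_i\gamma_i=\gamma$. Since $g\circ\alpha=f$ and $g$ is discrete over vertices, and $\cup_i\alpha(\gamma_i)=\alpha(\gamma)=\cup_i\gamma'_i$ with $g(\alpha(\gamma_i))=\sigma_i=g(\gamma'_i)$, the uniqueness clause of Remark \ref{Improppp} applied to $g$ forces $\alpha(\gamma_i)=\gamma'_i$. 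Conversely, any tuple in the RHS index set assembles via $\gamma=\cup_i\gamma_i$ to an element of the LHS index set, and the two constructions are mutually inverse. The summands match since $p_{\cup_i\sigma_i}(\gamma)=p_{\cup_i\sigma_i}(\cup_i\gamma_i)$, yielding the desired equality.

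Finally, I would dispose of the off-support case: if $(\gamma'_1,\cdots,\gamma'_n) \notin (Ng)^{-1}_n(\sigma_1,\cdots,\sigma_n)$, the LHS is zero by Definition \ref{def:Np}. On the RHS, any preimage $(\gamma_1,\cdots,\gamma_n)$ of $(\gamma'_1,\cdots,\gamma'_n)$ under $(N\alpha)_n$ satisfies $f(\gamma_i)=g(\alpha(\gamma_i))=g(\gamma'_i)\neq\sigma_i$ for some $i$, so $(\gamma_1,\cdots,\gamma_n) \notin (Nf)^{-1}_n(\sigma_1,\cdots,\sigma_n)$ and each term vanishes. The only real subtlety is careful bookkeeping of the bijection above; once discreteness over vertices is invoked for both $f$ and $g$, the rest is a direct reindexing.
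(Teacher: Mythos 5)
Your proposal is correct and follows essentially the same route as the paper: both sides are unpacked pointwise to the sums $\sum_{\gamma\in f^{-1}(\cup_i\sigma_i):\,\alpha(\gamma)=\cup_i\gamma'_i}p_{\cup_i\sigma_i}(\gamma)$ and $\sum_{(\gamma_1,\cdots,\gamma_n):\,\gamma_i\in f^{-1}(\sigma_i),\,\alpha(\gamma_i)=\gamma'_i}p_{\cup_i\sigma_i}(\cup_i\gamma_i)$, and the equality is established via the reindexing supplied by Remark \ref{Improppp}. Your write-up merely spells out the bijection (and the off-support case) in more detail than the paper, which condenses this to the observation that $\alpha$ inherits discreteness over vertices from $f$.
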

\begin{proof}
%\ak{[[I added a proof]]}
Given $(\sigma_1,\cdots,\sigma_n) \in (N \Sigma)_n$ and $(\gamma'_1,\cdots,\gamma'_n) \in (Ng)_n^{-1}(\sigma_1,\cdots,\sigma_n)$, we have 
$$
\begin{aligned}
N(\alpha_\ast p)_n & (\sigma_1,\cdots,\sigma_n)
(\gamma'_1,\cdots,\gamma'_n)\\
&=
(\alpha_{\ast}p)_{\cup_{i=1}^n \sigma_i}(\cup_{i=1}^n \gamma'_i)\\
&=D_R(\alpha|_{f^{-1}(\cup_{i=1}^n \sigma_i)})
(p_{\cup_{i=1}^n \sigma_i})(\cup_{i=1}^n \gamma'_i)\\
&=\sum_{\gamma \in
f^{-1}(\cup_{i=1}^n \sigma_i) \,:\,\, \alpha(\gamma)=\cup_{i=1}^n \gamma'_i}p_{\cup_{i=1}^n \sigma_i}(\gamma).
\end{aligned}
$$
On the other hand, 
$$
\begin{aligned}
D_R(N\alpha)_n &  \left((Np)_n(\sigma_1,\cdots,\sigma_n)\right)
(\gamma'_1,\cdots,\gamma'_n)\\
&=
\sum_{(\gamma_1,\cdots,\gamma_n)\,:\,\,(N\alpha)_n(\gamma_1,\cdots,\gamma_n)=(\gamma'_1,\cdots,\gamma'_n)}(Np)_n(\sigma_1,\cdots,\sigma_n)
(\gamma_1,\cdots,\gamma_n)\\
&=\sum_{(\gamma_1,\cdots,\gamma_n)\,:\,\,
\gamma_i \in f^{-1}(\sigma_i)\, \text{and} \,\alpha(\gamma_i)=\gamma'_i}
p_{\cup_{i=1}^n \sigma_i}(\cup_{i=1}^n\gamma_i).
\end{aligned}
$$
Note that $\alpha$ is discrete over vertices since $f$ is discrete over vertices. Therefore
by Remark \ref{Improppp} we obtain an equality.
\end{proof}

\begin{lem}\label{lem:Npiaast}
Let $\pi : \Sigma' \to \hat N \Sigma$ be a simplicial complex map  and  $f:\Gamma \to \Sigma$ be a bundle scenario. For $p\in \bEmp(f)$, we have 
$N(\pi_\ast p)=T(\pi)_\ast(Np)$. 
\end{lem}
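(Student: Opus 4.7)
The plan is to prove the equality $N(\pi_\ast p) = T(\pi)_\ast(Np)$ as simplicial distributions on the common scenario $Nf^\pi$, where the identification with $(Nf)^{T(\pi)}$ is furnished by the isomorphism $S$ of Lemma \ref{lem:tildemupull}. I would proceed by direct pointwise computation, evaluating both sides on an arbitrary $(\sigma_1',\ldots,\sigma_n') \in (N\Sigma')_n$ together with a lift $(\gamma_1',\ldots,\gamma_n') \in (Nf^\pi)_n^{-1}(\sigma_1',\ldots,\sigma_n')$.

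For the left-hand side, Definition \ref{def:Np} applied to $\pi_\ast p \in \bEmp(f^\pi)$ yields
\[
(N(\pi_\ast p))_n(\sigma_1',\ldots,\sigma_n')(\gamma_1',\ldots,\gamma_n')
= (\pi_\ast p)_{\cup_i \sigma_i'}(\cup_i \gamma_i'),
\]
and unfolding this via Definition \ref{def:push-along-T1} gives
$p_{\overline{\pi}(\cup_i \sigma_i')}(\overline{l}_f(\cup_i \gamma_i'))$,
which equals $p_{\cup_i \overline{\pi}(\sigma_i')}(\cup_i \overline{l}_f(\gamma_i'))$ since $\overline{\pi}$ preserves unions of simplices and $\overline{l}_f$ is induced by projection and therefore also commutes with unions.

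For the right-hand side, Equation (\ref{PiFormulaaa}) gives
\[
(T(\pi)_\ast Np)_n(\sigma_1',\ldots,\sigma_n')(\gamma_1',\ldots,\gamma_n')
= (Np)_n\bigl(T(\pi)_n(\sigma_1',\ldots,\sigma_n')\bigr)(e),
\]
where $e \in (N\Gamma)_n$ is the image of $(\gamma_1',\ldots,\gamma_n')$ under the projection $T(\pi)^\ast(N\Gamma) \to N\Gamma$. Under the isomorphism $S$ of Lemma \ref{lem:tildemupull}, this projection is precisely the tuple $(\overline{l}_f(\gamma_1'),\ldots,\overline{l}_f(\gamma_n'))$. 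Since $T(\pi)_n(\sigma_1',\ldots,\sigma_n') = (\overline{\pi}(\sigma_1'),\ldots,\overline{\pi}(\sigma_n'))$ by Definition \ref{def:T-pi}, applying Definition \ref{def:Np} once more yields $p_{\cup_i \overline{\pi}(\sigma_i')}(\cup_i \overline{l}_f(\gamma_i'))$, matching the left-hand side. Vanishing outside $(Nf^\pi)^{-1}(\sigma_1',\ldots,\sigma_n')$ is built in symmetrically: on the left by the case distinction in Definition \ref{def:Np}, and on the right by Proposition \ref{pro:sDist-characterization}.

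The main obstacle I anticipate is carefully managing the identification $S$ of Lemma \ref{lem:tildemupull} between $N\pi^\ast(\hat N\Gamma)$ and the honest pull-back $T(\pi)^\ast(N\Gamma)$, so that the two descriptions of a common lift (as a tuple of pairs $(\gamma_{ij}, y_{ij})$ versus a pair of tuples) are translated consistently. Once that dictionary is fixed, the computation reduces entirely to the fact that $\overline{\pi}$ and $\overline{l}_f$ commute with unions of simplices.
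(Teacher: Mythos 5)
Your proposal is correct and follows essentially the same route as the paper's proof: both sides are evaluated pointwise on a simplex of $N\Sigma'$ and a lift, identified via Lemma \ref{lem:tildemupull}, and shown to equal $p_{\cup_i \overline{\pi}(\sigma_i')}(\cup_i \overline{l}(\gamma_i'))$ using the definitions of $Np$, the type~I push-forward, and the fact that $\overline{\pi}$ and $\overline{l}$ commute with unions. The paper's computation is a single chain of equalities from left to right rather than meeting in the middle, but the content is the same.
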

\begin{proof}
Note that $N(\pi_\ast p) \in sDist(Nf^{\pi})$, meanwhile, 
$T(\pi)_\ast(Np)\in sDist((Nf)^{T(\pi)})$. By Lemma \ref{lem:tildemupull} we can Identify 
$(Nf)^{T(\pi)}$ with $Nf^{\pi}$. 
Given 
$$(\sigma_1,\cdots,\sigma_n)\in (N\Sigma')_n \;\;\;\; \text{ and } \;\;\;\; (\gamma_1,\cdots,\gamma_n) \in (Nf^\pi)^{-1}(\sigma_1,\cdots,\sigma_n)$$ 
we have
$$
\begin{aligned}
N(\pi_\ast p)_n  (\sigma_1,\cdots,\sigma_n)(\gamma_1,\cdots,\gamma_n)
&=(\pi_\ast p)_{\cup_{i=1}^n \sigma_i}(\cup_{i=1}^n \gamma_i)\\
&=p_{\overline{\pi}(\cup_{i=1}^n \sigma_i)}(\overline{l}\left(\cup_{i=1}^n \gamma_i)\right)
\\
&=p_{\cup_{i=1}^n \overline{\pi}(\sigma_i)}\left(\cup_{i=1}^n \overline{l}(\gamma_i)\right)\\
&=(Np)_n\left(\overline{\pi}(\sigma_1),\cdots,\overline{\pi}(\sigma_n)\right)\left(\overline{l}(\gamma_1),\cdots,\overline{l}(\gamma_n)\right)\\
&=
(Np)_n\left((\tilde{\mu}_{\Sigma}\circ N \pi)_n(\sigma_1,\cdots,\sigma_n)\right)\left((\tilde{\mu}_{\Gamma}\circ N l)_n (\gamma_1,\cdots,\gamma_n)
\right)\\
&=\left(T(\pi)_\ast(Np)\right)_n(\sigma_1,\cdots,\sigma_n)(\gamma_1,\cdots,\gamma_n).
\end{aligned}
$$

\end{proof}

\section{Convex categories}\label{sec:ConvCat}
\label{sec:lem-conv-cat}

In this section we recall the notion of a convex category introduced in \cite{kharoof2022simplicial} and some basic properties of these objects. 
In the abstract setting convexity is defined using algebras over a monad. 
A {\it monad} on a category $\catC$ is a functor $T:\catC\to \catC$ together with natural transformations $\delta: \idy_\catC \to T$ and $\mu:T^2 \to T$ (satisfying certain conditions).
%satisfying $\mu \circ T\mu ={\mu\circ \mu T}$ and $\mu\circ \delta T = \mu\circ T\delta = {\idy_T}$.
A {\it $T$-algebra} consists of an object $X$ of
$\catC$ together with a structure map given by a morphism 
$$\nu=\nu^X:T(X) \to X$$
 of $\catC$  
such that $\nu\circ\delta_X=\idy_X$ and the following diagram commutes  
\begin{equation}\label{diag:Talg}
		\begin{tikzcd} [column sep=huge,row sep=large] 
			T^2(X) \arrow[r,"T(\nu)"] \arrow[d,"\mu_X"'] & T(X) \arrow[d,"\nu"] \\
			T(X) \arrow[r,"\nu"] & X
		\end{tikzcd}
\end{equation}
A  {\it morphism of $T$-algebras} is a morphism $f:X \to Y$ of $\catC$ that commutes with the structure maps.
%such that 
%$\nu^Y \circ T(f) =f\circ \nu^X$.
The  {\it category of $T$-algebras} will be denoted by $\catC^T$.
The object $T(X)$ together with the structure morphism $\mu_{X}$ is called a  {\it free $T$-algebra}. There is an adjunction $T:\catC \adjoint \catC^T:U$ where $T$ sends an object to the associated free $T$-algebra and $U$ is the forgetful functor. 
Under the bijection $\catC^T(T(X),Y)\cong \catC(X,Y)$ a morphism $f:T(X)\to Y$ is sent to $f\circ \delta_X$. Conversely, under this isomorphism, a morphism $g:X\to Y$ is sent to $\nu^Y\circ T(g)$. The {\it Kleisli category} of $T$, denoted by $\catC_T$, is the category whose objects are the same as the objects of $\catC$ and morphisms $X\to Y$ are given by $\catC(X,{T}(Y))$. Let   
\begin{equation}\label{eq:FT}
F_{T} : \catC \to\catC_{T}
\end{equation}
denote the functor defined as identity on objects and by sending a morphism $f: X \to Y$ to  the composite $F_T(f):X \xrightarrow{f} Y \xrightarrow{\delta_Y} TY$. See \cite[Section VI]{mac2013categories} and \cite[Chapter 5]{riehl2017category}.

%\ak{[[I don't know why Mac Lane doesn't work]].}
%follows:
%\begin{itemize}
%\item $\ob(\catC) \to \ob(\catC_T)$ is the identity functor.
%\item $F_T :\catC(X,Y) \to 
%\catC_T(X,Y)$ is defined to be $(\delta_Y)_\ast$, that is  $f: X \to Y$  is sent to  the composite $F_T(f):X \xrightarrow{f} Y \xrightarrow{\delta_Y} TY$.
%\end{itemize}	
%The functor $F_T$ 	has a right adjoint $U_T:\catC_T \to \catC$. 

Recall that the functor $D_R:\catSet\to \catSet$ is a monad. Algebras over this monad are called {\it $R$-convex sets}.
We will write $\catConv_R$ for the {\it category of $R$-convex sets}, i.e., for the category $\catSet^{D_R}$ of algebras. 
%This means that an $R$-convex set comes with an additional structure map 
%$$
%\nu^U: D_R U \to U
%$$
%that employs the set with an $D_R$-algebra structure.
%There is an adjunction $\catSet \adjoint \catConv_R$ obtained by the usual adjunction between a category and its category of algebras with respect to a monad. 
%Under the bijection $\catConv_R(D_R(U),V) \cong \catSet(U,V)$ a morphism $f:D_RU\to V$  is sent to $f\circ \delta_U$. Conversely, under this isomorphism, a morphism $g:U\to V$ is sent to $\nu^V\circ D_R(g)$.
Applying the distribution functor level-wise one can extend to a functor $D_R:\catsSet \to \catsSet$, which turns out to be a monad as well.
The resulting algebras are called simplicial convex sets, and the category of these objects will be denoted by $\catsConv_R$.

\Pro{\label{pro:2-15} 
The functor $\catsSet(-,-) : \catsSet^{op} \times \catsSet \to \catSet$ restricts to a functor
$$
\catsSet(-,-): \catsSet^{op} \times s\catConv_R \to \catConv_R.
$$
}
\Proof{
See Proposition 2.15 in \cite{kharoof2022simplicial}. 
}

%Simplicial distributions introduced in \cite{okay2022simplicial} 
%can be assembled into a functor
%$$
%\catsSet(-,D_R(-)):\catsSet^\op \times \catsConv_R \to \catConv_R
%$$
The simplicial set $D_R(Y)$ is an object of $\catsConv_R$.
Therefore $\catsSet(X,D_R(Y))$ is an $R$-convex set and the
 convex structure  is defined by  
\begin{equation}\label{eq:sSet-nu-eq10}
\nu^{\catsSet(X,D_R(Y))}(Q)_n(x)  = \sum_{p\in \catsSet(X,D_R(Y))} Q(p) p_n(x)
\end{equation}
where $Q\in D_R (\catsSet(X,D_R(Y)))$ and $x\in X_n$. 
Let $\Theta_{X,Y}$ denote the transpose
of $(\delta_Y)_\ast : \catsSet(X,Y) \to \catsSet(X,D_R(Y))$ in $\catConv_R$ with respect to the adjunction $\catSet\adjoint \catConv_R$.
More explicitly, we have
\begin{equation}\label{eq:Thetapi}
\Theta_{X,Y}=\nu^{\catsSet(X,D_R(Y))}\circ D_R((\delta_Y)_\ast).
\end{equation}

The starting point for passing to convex categories is the observation that the distribution monad $D_R:\catSet \to \catSet$ can be upgraded to a monad on the category of locally small categories $D_R:\catCat \to \catCat$. 
\Def{\label{def:convex-cat}
A $D_R$-algebra in the category of locally small categories $\catCat$ is called an {\it $R$-convex category}. We will write $\catConvCat_R$
for the {\it category of $R$-convex categories} (see  \cite[Section 3.1]{kharoof2022simplicial}
 for more details).
}

\Def{\label{def:Free CatFun}
For a locally small category $\catC$, the {\it free $R$-convex category} $D_R(\catC)$ is defined as follows:
\begin{itemize}
\item Its objects are the same objects as $\catC$.
\item Morphisms are given by $D_R(\catC)(X,Y)=D_R(\catC(X,Y))$.
\end{itemize}
%has the same objects as $\catC$ and $D_R(\catC)(X,Y)=D_R(\catC(X,Y))$. 
For a functor $F:\catC \to \catD$ we define a functor $D_R(F):D_R(\catC)\to D_R(\catD)$ between the free $R$-convex categories by $X\mapsto F(X)$ on objects and by $F_{X,Y}\mapsto D_R(F_{X,Y})$ where $F_{X,Y}:\catC(X,Y)\to \catD(X,Y)$
%$
%F_{X,Y}\mapsto D_R(F_{X,Y})$ for a morphism $F_{X,Y}:X\to Y$.
%acts on objects as $F$ and $D_R(F)_{X,Y}=D_R(F_{X,Y})$. 
}

\begin{pro}\label{pro:3-14}
The transpose of the functor $F_{D_R}: \catsSet \to \catsSet_{D_R}$  (see Equation (\ref{eq:FT}))
		with respect to the 
		adjunction $D_R: \catCat \adjoint \catConvCat_R: U$ 
		is the functor 
		$\Theta:  D_R(\catsSet) \to \catsSet_{D_R}$   defined as identity on the objects and as the map $\Theta_{X,Y}$ in Equation (\ref{eq:Thetapi})  
		on morphisms. 
\end{pro}
\begin{proof}
See \cite[Proposition 3.14]{kharoof2022simplicial}
\end{proof}

In the next two results we will use the version $\Theta:D_R(\catSet)\to \catSet_{D_R}$ for the category of sets.

\lem{\label{ThetaConvRes}
For $R$-convex sets $X,Y$ and $Q \in D_R(\catConv_R(X,Y))$, we have
\begin{equation}
\nu^Y \circ \Theta_{X,Y}(Q)\circ \nu^X=\nu^Y \circ \mu_Y \circ D_R(\Theta_{X,Y}(Q)).
\end{equation}
%
%$\Theta_{X,Y}:D_R(\catSet(X,Y)) 
%\to \catSet(X,D_R(Y))$ restricts to a map $\Theta_{X,Y}:D_R(\catConv_R(X,Y)) 
%\to \catConv_R(X,D_R(Y))$.
}
\begin{proof}
%Given $Q \in D_R(\catSet(X,Y))$, we prove that $\Theta_{X,Y}(Q)$ is an $R$-convex map, which means that the following diagram commutes: 
%$$
%\begin{tikzcd}
%D_R(X) 
%\arrow[rr,
%"D_R(\Theta_{X,Y}(Q))"]
% \arrow[d,"\pi^X"'] && D_R(D_R(Y)) 
% \arrow[d,"\pi^{D_R(Y)}"] %\\ X \arrow[rr,"\Theta_{X,Y(Q)"] && D_R(Y)
%\end{tikzcd}
%$$ 
Given $p \in D_R(X)$, we have
$$
\Theta_{X,Y}(Q)(\nu^X(p))=
\sum_{\varphi \in \catConv_R(X,Y)} Q(\varphi)\delta^{\varphi(\nu^X(p))}
=\sum_{\varphi \in \catConv_R(X,Y)}Q(\varphi)\delta^{\nu^Y\left(D_R(\varphi)(p)\right)}.
$$
On the other hand 
$$
\begin{aligned}
\mu_{Y}\left(D_R(\Theta_{X,Y}(Q))(p)\right)&=\sum_{q \in D_R(Y)} \left(D_R(\Theta_{X,Y}(Q))(p)\right)(q)q\\
&=
\sum_{q \in D_R(Y)} \left(\sum_{x \in X:\, (\Theta_{X,Y}(Q))(x)=q}p(x)\right)q\\
&=\sum_{x\in X}p(x) 
(\Theta_{X,Y}(Q))(x) \\
&= \sum_{x\in X}p(x)
\sum_{\varphi \in \catConv_R(X,Y)} Q(\varphi)\delta^{\varphi(x)} \\
&= 
\sum_{\varphi \in \catConv_R(X,Y)} Q(\varphi)\sum_{x\in X}p(x)\delta^{\varphi(x)}\\
&= 
\sum_{\varphi \in \catConv_R(X,Y)} Q(\varphi)D_R(\varphi)(p).
\end{aligned}
$$    
We define $\tilde{Q} \in D_R(D_R(Y))$ by 
%
%\sum_{x\in X}p(x) \delta^{\varphi(x)}
$
\tilde{Q}(q)=
\sum_{\varphi: \,D_R(\varphi)(p)=q }Q(\varphi)  
$ for $q \in D_R(Y)$. 
This is a well-defined distribution since
$$
\sum_{q\in D_R(Y)}\tilde{Q}(q)=
\sum_{q\in D_R(Y)} \;\sum_{\varphi: \,D_R(\varphi)(p)=q }Q(\varphi) 
=\sum_{\varphi \in \catConv_R(X,Y)}Q(\varphi)=1 .
$$
In addition, we have
$$
D_R(\nu^Y)(\tilde{Q})=\sum_{q \in D_R(Y)}\tilde{Q}(q)\delta^{\nu^Y(q)}=
\sum_{\varphi \in \catConv_R(X,Y)}Q(\varphi)\delta^{\nu^Y\left(D_R(\varphi)(p)\right)}
%=\sum_{\varphi \in \catConv_R(X,Y)}Q(\varphi)\delta^{\varphi(\nu^X(p))}
$$
and
$$
\mu_Y(\tilde{Q})=\sum_{q \in D_R(Y)}\tilde{Q}(q)q=
\sum_{\varphi \in \catConv_R(X,Y)}Q(\varphi)D_R(\varphi)(p).
$$
%
% So it's enough to prove that for an $R$-convex map $q$, we have 
By Diagram (\ref{diag:Talg})  we obtain the result.  
\end{proof}

\Pro{\label{pro:ConvisConv}
The category $\catConv_R$ is an $R$-convex category.
}
\begin{proof}
Given $R$-convex sets $X$,$Y$, by  Proposition \ref{pro:2-15}   we obtain that $\catSet(X,Y)$ is an $R$-convex set.
Next, we prove that $\catConv_R(X,Y)$ is an $R$-convex subset of $\catSet(X,Y)$. 
Given $Q \in D_R(\catConv_R(X,Y))$, 
% the map $\Theta_{X,Y}(Q)$ is also $R$-convex map, so 
we will show that $\nu(Q)=\nu^Y \circ (\Theta_{X,Y}(Q))$ is an 
 $R$-convex map.
Using Lemma \ref{ThetaConvRes} and Diagram (\ref{diag:Talg}) we have
$$
\begin{aligned}
\nu(Q) \circ \nu^X
&=\nu^Y \circ \Theta_{X,Y}(Q)\circ \nu^X\\
&=\nu^Y \circ \mu_Y \circ D_R(\Theta_{X,Y}(Q))\\
&=\nu^Y \circ D_R(\nu^Y) \circ D_R(\Theta_{X,Y}(Q))\\
&=
\nu^Y \circ D_R(\nu(Q)).
\end{aligned}
$$ 
Now, given $P \in D_R(\catConv_R(X,Y))$ and
$Q \in D_R(\catConv_R(Y,Z))$,  
we will prove that $\nu(Q \ast P)=\nu(Q)\circ \nu(P)$. %which means that 
%$\pi^Z \circ \Theta_{X,Z}(Q \ast P)=\pi^Z \circ \Theta_{Y,Z}(Q)\circ \pi^Y \circ \Theta_{X,Y}(P)$.
%
Using Proposition \ref{pro:3-14} and Lemma \ref{ThetaConvRes} we have
$$
\begin{aligned}
\nu(Q \ast P) 
&=\nu^Z \circ \left( \Theta_{X,Z}(Q \ast P)\right)\\
&= \nu^Z \circ
\left(\Theta_{Y,Z}(Q)\diamond \Theta_{X,Y}(P)\right) \\
&= \nu^Z \circ
\mu_Z \circ D_R(\Theta_{Y,Z}(Q))\circ \left( \Theta_{X,Y}(P)\right) \\
&=
\nu^Z \circ \Theta_{Y,Z}(Q)\circ \nu^Y \circ \left( \Theta_{X,Y}(P)\right)\\
&=\nu(Q)\circ \nu(P).
\end{aligned}
$$
%
%\end{equation}
%
\end{proof}

\Pro{\label{pro:Nattrans} 
Let $\catC$ be a (locally) small category and $\catE$ be an $R$-convex category.
Consider two functors $F,G:\mathbf{C} \to \mathbf{E}$
and a natural transformation $\eta: F \to G$.
Let $\tilde{F}$ and $\tilde{G}$ be the transpose of $F$ and $G$
with respect to the adjunction $\Cat \dashv \catConvCat_R$; respectively.
Then $\eta$ lifts to a natural transformation $\tilde \eta:\tilde F\to \tilde G$.
% is also a natural map from $\tilde{F}$ to $\tilde{G}$.
% Given two functors $F,G:\mathbf{C} \to \mathbf{E}$, where $\mathbf{E} \in \catConvCat_R $ and a natural map $\eta: F \Rightarrow G$. Let $\tilde{F}$ and $\tilde{G}$ be the transposes of $F$ and $G$ with respect to the adjunction $\Cat \dashv \catConvCat_R$. Then $\eta$ is also a natural map from $\tilde{F}$ to $\tilde{G}$.
}
\begin{proof}
We define $\tilde \eta_X = \eta_X$ at an object $X$ of $D_R(\catC)$.
Given $Q \in D_R(\mathbf{C})(X,Y)$, we will prove that $\eta_Y \circ \tilde{F}_{X,Y}(Q)=\tilde{G}_{X,Y}(Q)\circ \eta_X$. Firstly, we have
$$
\tilde{F}_{X,Y}(Q)=\pi_{F(X),F(Y)}\left(D_R(F_{X,Y})(Q)\right)=\pi_{F(X),F(Y)}
\left(\sum_{f\in C(X,Y)} Q(f) \delta^{F_{X,Y}(f)}\right),
$$
see Remark 2.1 in \cite{kharoof2022simplicial} . Similarly, we have 
$$
\tilde{G}_{X,Y}(Q)=\pi_{G(X),G(Y)}
\left(\sum_{f\in C(X,Y)} Q(f) \delta^{G_{X,Y}(f)}\right).
$$
Now, by the functoriality of $\pi$ and Lemma 3.2 from \cite{kharoof2022simplicial}
we have
$$
\begin{aligned}
\eta_Y \circ \tilde{F}_{X,Y}(Q)&=\pi_{F(Y),G(Y)}(\delta^{\eta_Y})\circ \pi_{F(X),F(Y)}
\left(\sum_{f\in C(X,Y)} Q(f) \delta^{F_{X,Y}(f)}\right)\\
&=\pi_{F(X),G(Y)}\left(\delta^{\eta_Y}\ast 
(\sum_{f\in C(X,Y)} Q(f) \delta^{F_{X,Y}(f)})\right)\\
&=\pi_{F(X),G(Y)} 
\left(\sum_{f\in C(X,Y)} Q(f) \delta^{\eta_Y\circ F_{X,Y}(f)}\right).
\end{aligned}
$$ 
Similarly, we have 
$$
\tilde{G}_{X,Y}(Q) \circ \eta_X = \pi_{F(X),G(Y)} 
\left(\sum_{f\in C(X,Y)} Q(f) \delta^{ G_{X,Y}(f)\circ \eta_X } \right).
$$ 
The result follows from the naturality of $\eta$.
%We get the result because $\eta$ is a natural map from $F$ to $G$.
\end{proof}

\end{document}